\newcommand{\grad}{\mathsf{grad}} 
\newcommand{\M}{\mathcal{M}}
\newcommand{\E}{\mathbb{E}}
\newcommand{\RR}{\mathbb{R}}
\providecommand{\diag}{\mathop\mathrm{diag}}
\providecommand{\tr}{\mathop\mathrm{tr}}
\newcommand{\bxi}{\bar{\xi}}
\newcommand{\bet}{\bar{\eta}}
\newcommand{\retr}{\mathsf{Retr}}
\newcommand{\Exp}{\mathsf{Exp}}
\newcommand{\proj}{\mathsf{proj}}
\newcommand{\St}{\operatorname{St}}
\newcommand{\Gr}{\operatorname{Gr}}
\newcommand{\T}{\operatorname{T}}
\renewenvironment{proof}{\noindent\textbf{Proof.}\hspace*{.3em}}{\qed\\}
\newenvironment{proof-sketch}{\noindent\textbf{Proof Sketch}
  \hspace*{0.em}}{\qed\bigskip\\}
\newenvironment{proof-idea}{\noindent\textbf{Proof Idea}
  \hspace*{0.em}}{\qed\bigskip\\}
\newenvironment{proof-of-lemma}[1][{}]{\noindent\textbf{Proof of Lemma {#1}.}
  \hspace*{0.em}}{\qed\\}
\newenvironment{proof-of-corollary}[1][{}]{\noindent\textbf{Proof of Corollary {#1}.}
  \hspace*{0.em}}{\qed\\}
\newenvironment{proof-of-theorem}[1][{}]{\noindent\textbf{Proof of Theorem {#1}.}
  \hspace*{0.em}}{\qed\\}
\newenvironment{proof-attempt}{\noindent\textbf{Proof Attempt}
  \hspace*{0.em}}{\qed\bigskip\\}
\newtheorem{theorem}{Theorem}[section]
\newtheorem{lemma}{Lemma}[section]
\newtheorem{proposition}{Proposition}[section]
\newtheorem{assumption}{Assumption}[section]
\newtheorem{remark}{Remark}[section]
\newtheorem{definition}{Definition}[section]
\renewcommand*{\backref}[1]{\ifx#1\relax \else Page #1 \fi}
\renewcommand*{\backrefalt}[4]{%
  \ifcase #1 \footnotesize{(Not cited.)}%
  \or        \footnotesize{(Cited on page~#2.)}%
  \else      \footnotesize{(Cited on pages~#2.)}%
  \fi
}
\newcommand*{\colorboxed}{}
\def\colorboxed#1#{%
  \colorboxedAux{#1}%
}
\newcommand*{\colorboxedAux}[3]{%
  \begingroup
    \colorlet{cb@saved}{.}%
    \color#1{#2}%
    \boxed{%
      \color{cb@saved}%
      #3%
    }%
  \endgroup
}
\numberwithin{equation}{section}
\newcommand{\todol}[2][]{{%
 \let\marginpar\marginnote
 \reversemarginpar
 \renewcommand{\baselinestretch}{0.8}%
 \todo[color=yellow]{#2}}}
\title{Zeroth-order Riemannian Averaging Stochastic Approximation Algorithms}
\author{
{Jiaxiang Li} \thanks{Department of Mathematics, University of California, Davis.  \texttt{jxjli@ucdavis.edu}}
\and
Krishnakumar Balasubramanian\thanks{Department of Statistics, University of California, Davis.  \texttt{kbala@ucdavis.edu}}
\and
Shiqian Ma \thanks{Department of Computational Applied Math and Operations Research, Rice University.  \texttt{sqma@rice.edu}}
\and
}
\date{}
\begin{document}
\maketitle

\begin{abstract}
We present Zeroth-order Riemannian Averaging Stochastic Approximation (\texttt{Zo-RASA}) algorithms for stochastic optimization on Riemannian manifolds. We show that \texttt{Zo-RASA} achieves optimal sample complexities for generating  $\epsilon$-approximation first-order stationary solutions using only one-sample or constant-order batches in each iteration. Our approach employs Riemannian moving-average stochastic gradient estimators, and a novel Riemannian-Lyapunov analysis technique for convergence analysis. We improve the algorithm's practicality by using retractions and vector transport, instead of exponential mappings and parallel transports, thereby reducing per-iteration complexity. Additionally, we introduce a novel geometric condition, satisfied by manifolds with bounded second fundamental form, which enables new error bounds for approximating parallel transport with vector transport. 
\end{abstract}

\section{Introduction} 
We consider zeroth-order algorithms for solving the following Riemannian optimization problem,
\begin{align}\label{stochastic_problem}
    \min_{x\in\M} f(x):=\E_{\xi}[F(x,\xi)],
\end{align}
where $\M$ is a $d$-dimensional complete manifold, $f:\M\rightarrow\RR$ is a smooth function, and we can access only the noisy function evaluations $F(x,\xi)$. A natural zeroth-order algorithm is to estimate the gradients of $f$ and use them in the context of Riemannian stochastic gradient descent. The main difficulty in doing so is the construction of the zeroth-order gradient estimation. Assuming that we have independent samples $u_i$ that are standard normal random vectors supported on $\T_{x}\M$, the tangent space at $x\in\mathcal{M}$,~\cite{li2022stochastic} proposed to construct the zeroth-order gradient estimator as
\begin{align}\label{zeroth_order_estimator}
    G^\Exp_{\mu}(x) = \frac{1}{m}\sum^m_{i=1}\frac{F(\Exp_{x}(\mu u_i),\xi_i) - F(x, \xi_i)}{\mu} u_i
\end{align}
where $\mu>0$ is a smoothing parameter. Note here that if a retraction is available, then one could also replace the exponential mapping with a retraction based estimator,
\begin{align}\label{zeroth_order_estimator_retr}
    G^\retr_{\mu}(x) = \frac{1}{m}\sum^m_{i=1}\frac{F(\retr_{x}(\mu u_i),\xi_i) - F(x, \xi_i)}{\mu} u_i.
\end{align}
The merit of having a Gaussian distribution on the tangent space is that the variance of the constructed estimator $G_{\mu}(x)$ will only depend on the intrinsic dimension $d$ of the manifold, and is independent of the dimension $n$ of the ambient Euclidean space. We refer to \cite{li2022stochastic} for the details of our zeroth-order estimator and its applications. See also~\cite{wang2021greene,wang2023sharp} for additional follow-up works.

To obtain an $\epsilon$-approximate stationary solution of~\eqref{stochastic_problem} (as in Definition~\ref{def:epsstat}) using the above approach, \cite{li2022stochastic} established a sample complexity of $\mathcal{O}(d/\epsilon^4)$, with $\mathcal{O}(1/\epsilon^2)$ iteration complexity and $m=\mathcal{O}(d/\epsilon^2)$ per-iteration batch size. Even considering $d=1$ for simplicity, this suggests for example that to get an accuracy of $\epsilon \approx 10^{-3}$, one needs batch-sizes of order $m\approx 10^{6}$ resulting in a  highly impractical per-iteration complexity. Intriguingly, when implementing these algorithms in practice, favorable results are obtained even when the batch-size is simply set between ten and fifty. Thus, there exists a discrepancy between the current theory and practice of stochastic zeroth-order Riemannian optimization. Furthermore, in online Riemannian optimization problems~\citep{maass2022tracking,wang2023online} where the data sequence is observed in a streaming fashion, waiting for very long time-periods in each iteration in order to obtain the required order of batch-sizes is highly undesirable.   

\begin{table}[t]
\begin{center}
\begin{small}
\begin{sc}
\begin{tabular}{|c|c|c|c|c|c|}
\hline
Result & Objective & Manifold & Operations & $m$ & N  \\
\hline
\thead{\texttt{Zo-RSGD}\\ \cite[Alg 1]{li2022stochastic}} & \thead{Smooth,\\ 2MB} & general & Retr & $\mathcal{O}(d/\epsilon^2)$ &$\Omega(1)$  \\  \hline

\multirow{2}{*}{\thead{\texttt{Zo-RASA},\\ Alg~\ref{algorithm2}, Thm \ref{theorem3}}} & \multirow{2}{*}{\thead{Smooth,\\ 2MB}} & \multirow{2}{*}{General} & \multirow{2}{*}{\thead{Exp map,\\ PT}} & $\mathcal{O}(d)$ & $\Omega(1)$  \\ \cline{5-6}
& & & & $\mathcal{O}(1)$ & $\Omega(d)$\\ \hline

\multirow{2}{*}{\thead{\texttt{Zo-RASA},\\ Alg \ref{algorithm2_vec_tran}, Thm \ref{theorem2_vec}}} & \multirow{2}{*}{\thead{Smooth,\\ 4MB}} & \multirow{2}{*}{\thead{Compact,\\2nd FF bound}} & \multirow{2}{*}{\thead{SO-Retr,\\ VT}} & $\mathcal{O}(d)$ & $\Omega(1)$\\ \cline{5-6}
& & & & $\mathcal{O}(1)$ &  $\Omega(1)$\\ \hline
\end{tabular}
\end{sc}
\end{small}
\end{center}
\caption{\textbf{Conditions required to establish a sample complexity of $\mathcal{O}(d/\epsilon^4)$ for various algorithms for convergence to stationarity in the sense of Definition~\ref{def:epsstat}}. For instance, to obtain the $\mathcal{O}(d/\epsilon^4)$ sample complexity for Alg~\ref{algorithm2}, we need to require $m=\mathcal{O}(d)$ and $N=\Omega(1)$, or $m=\mathcal{O}(1)$ and $N=\Omega(d)$. Here, 2MB and 4MB stand for bounded second central moment (i.e., variance) (Assumption \ref{assumption0_1}) and fourth central moment (Assumption \ref{assumption3}) respectively. 2nd FF stands for second fundamental form (Theorem \ref{thm_assump_vec_trans}) (see Section \ref{sec_manifold_basics} for definition of second fundamental form). SO-RETR stands for second-order retraction (Assumption \ref{assumption4}). PT and VT stand for parallel and vector transport respectively (see, Definition~\ref{def_vec_para_trans}). The parameter $d$ is the intrinsic dimension of the manifold $\mathcal{M}$, $m$ is the batch-size, $N$ is the total number of iterations required, and $\epsilon$ is the desired precision. Oracle complexity refers to the number of calls to the stochastic zeroth-order oracle. We also remark here that although \citet[Algorithm 1]{li2022stochastic} uses retraction, its convergence analysis also assumes retraction-based smoothness. For Zo-RASA, we need the initial batch-size $m_0=\mathcal{O}(d)$.}
\label{table0}
\end{table}

The main motivation of the current work stems from the above-mentioned undesirable issues associated with the use of mini-batches in stochastic Riemannian optimization algorithms by~\cite{li2022stochastic}. We address the problem by getting rid of the use of mini-batches altogether, and by developing batch-free, fully-online algorithm, Zeroth-order Riemannian Averaging Stochastic Approximation \texttt{(Zo-RASA)} algorithm,  for solving~\eqref{stochastic_problem}. We show that to obtain the sample complexity of $\mathcal{O}(d/\epsilon^4)$, \texttt{Zo-RASA} only requires $m=1$ (see the remark after Theorem \ref{theorem3}), which is a significant improvement compared to \cite{li2022stochastic}. The first version of \texttt{Zo-RASA} in Algorithm~\ref{algorithm2} uses exponential mapping and parallel-transports. However, this version is not implementation-friendly. As a case-in-point, consider the Stiefel manifold (see \eqref{stiefel}) for which we highlight that there is no closed-form expression for the parallel transport $P_{x^k}^{x^{k+1}}$. Indeed, they are only available as solutions to certain ordinary differential equation, which increases the per-iteration complexity of implementing Algorithm~\ref{algorithm2}. To overcome this issue and to develop a practical version of the RASA framework, we replace the exponential mapping and parallel transport by retraction and vector transport respectively, resulting in the  practical version of \texttt{Zo-RASA} method in Algorithm~\ref{algorithm2_vec_tran}. As we will discuss in Section \ref{sec_manifold_basics}, in the case of Stiefel manifolds, retractions cost only $1/4$ the time of an exponential mapping. Also, while there is no closed-form for parallel transport on Stiefel manifolds, vector transport has an easy closed-form implementation. We establish that Algorithm~\ref{algorithm2_vec_tran} has the same sample complexity as Algorithm~\ref{algorithm2}, with significantly improved per-iteration complexity. We now highlight two specific novelties that we introduce in this work to establish the above result.

\begin{itemize}[leftmargin=0.2in]
    \item \textbf{Moving-average gradient estimators and Lifting-based Riemannian-Lyapunov analysis.} We introduce a Riemannian moving-average technique (see, Line 4 in Algorithm~\ref{algorithm2} and Algorithm~\ref{algorithm2_vec_tran}) and a corresponding novel Riemannian-Lyapunov technique for analyzing zeroth-order stochastic Riemannian optimization problems, which works in the lifted space by tracking both the optimization trajectory and the gradient along the trajectory (see~\eqref{eta_def}). For Euclidean problems, these techniques were introduced and extended in~\cite{ruszczynski1983stochastic,ruszczynski1987linearization,ghadimi2020single,ruszczynski2021stochastic,balasubramanian2022stochastic}. However, those works rely heavily on the Euclidean structure. Non-trivial adaptions are needed to extend such methodology and analyses to the Riemannian settings; see Theorem~\ref{theorem3} and Theorem~\ref{theorem2_vec}.  \item \textbf{Approximation error between parallel and vector transports.} A major challenge in analyzing Algorithm~\ref{algorithm2_vec_tran} is to handle the additional errors introduced by the use of retractions and vector transports. We identify a novel geometric condition on the manifolds under consideration (see Assumption \ref{assumption1}) under which we provide novel error bounds between parallel and vector transports (see Theorem \ref{thm_assump_vec_trans}). We further show that the proposed condition, which plays a crucial role in our subsequent convergence analysis, is naturally satisfied if the \emph{second fundamental form} of the manifold is bounded. We remark that the obtained error bounds, between parallel and vector transport, are of independent interest and are potentially applicable to a variety of other Riemannian optimization problems.
\end{itemize}
In Table \ref{table0}, we summarize the sample complexities of stochastic zeroth-order Riemannian unioptimization algorithms

\subsection{Prior works} We refer to~\cite{absil2009optimization,boumal2020introduction} for a discussion on general Riemannian optimization methods. To the best of our knowledge,~\cite{li2022stochastic} provided the first oracle complexity results for zeroth-order stochastic Riemannian optimization. Following this,~\cite{wang2021greene, wang2023sharp, maass2022tracking} improved and extended the applicability of zeroth-order Riemannian optimization. A central concern in Riemannian optimization is the increased per-iteration complexity caused by the use of exponential mapping and (sometimes) parallel transport. To tackle this, retraction and vector transport are often preferred~\citep{absil2009optimization,boumal2020introduction}. Such replacements have thus far been considered in the deterministic settings, in the context of Riemannian quasi-Newton methods~\citep{huang2015broyden}, Riemannian variance reduction methods~\citep{sato2019riemannian}, Riemannian proximal gradient methods~\citep{chen2020proximal,huang2022riemannian} and Riemannian conjugate gradient methods~\citep{sato2022riemannian}. We discuss precise comparisons to this work later in Section~\ref{sec:comparisonprior}.

Stochastic gradient averaging methods in the Euclidean setting were studied in the several earlier works~\citep{polyak1977comparison,ruszczynski1983stochastic,xiao2009dual}. For nonconvex problems,~\cite{ghadimi2020single} analyzed the averaging stochastic approximation algorithm and established a sample complexity of $\mathcal{O}(1/\epsilon^4)$ to obtain an $\epsilon$-approximate first-order stationary solution without using mini-batches; see also~\cite{ghadimi2022stochastic} for a zeroth-order extension. For the smooth Riemannian setting,~\cite{han2020riemannian} used a related moving-average technique, and achieve $\mathcal{O}(\epsilon^{-3})$ sample complexity. However,~\cite{han2020riemannian} assumes a Lipschitz smooth-type inequality over $\grad F(x;\xi)$ itself under a given retraction (which is stronger than our assumption) and assume access to the computationally demanding isometric vector transport (see \eqref{eq_isometry_parallel_trans}). More importantly, they assume an opaque and rather strong condition that all iterates of their algorithm are close to a local optima of the problem to carry out their analysis.

\section{Basics of Riemannian optimization}\label{sec_manifold_basics} A differentiable manifold $\M$ is a Riemannian manifold if it is equipped with an inner product (called Riemannian metric) on the tangent space, $\langle \cdot, \cdot \rangle _x : \T_x\M \times \T_x\M \rightarrow \RR$, that varies smoothly on $\M$. The norm of a tangent vector is defined as $\|\xi\|_x\coloneqq\sqrt{\langle \xi, \xi\rangle _x}$. We drop the subscript $x$ and simply write $\langle \cdot, \cdot \rangle$ (and $\|\xi\|$) if $\M$ is an embedded submanifold with Euclidean metric. Here we use the notion of the tangent space $\T_{x}\M$ of a differentiable manifold $\M$, whose precise definition can be found in \citep[Chapter 8]{tu2011manifolds}. As an example, consider the Stiefel manifold given by
\begin{align}\label{stiefel}
\M= \St(n, p):=\{X\in\RR^{n\times p}: X^\top X=I_p\}.
\end{align}
The tangent space of $\St(n, p)$ is given by 
$\T_X\M=\{\xi\in\RR^{n\times p}: X^\top \xi+\xi^\top X=0\}.$ One could equip the tangent space with common inner product $\langle X, Y\rangle:=\tr(X^\top Y)$ to form a Riemannian manifold. For additional examples, see \citet[Chapter 3]{absil2009optimization}  or \citet[Chapter 7]{boumal2020introduction} .

We now introduce the concept of a Riemannian gradient and the notion of $\epsilon$-approximate first-order stationary solution for~\eqref{stochastic_problem}.
\begin{definition}[Riemannian Gradient]\label{def_riemann_grad}
    Suppose $f$ is a smooth function on Riemannian manifold $\M$. The Riemannian gradient $\grad f(x)$ is a vector in $\T_x\M$ satisfying $\left.\frac{d(f(\gamma(t)))}{d t}\right|_{t=0}=\langle v, \grad f(x)\rangle_{x}$ for any $v\in \T_x\M$, where $\gamma(t)$ is a curve satisfying $\gamma(0)=x$ and $\gamma'(0)=v$.
\end{definition}
\begin{definition}[$\epsilon$-approximate first-order stationary solution for \eqref{stochastic_problem}]\label{def:epsstat}
We call a point $\bar{x}$ an $\epsilon$-approximate first-order stationary solution for~\eqref{stochastic_problem} if it satisfies $\E[\| \grad f(\bar{x})\|_{\bar{x}}^2] \leq \epsilon^2$, where the expectation is with respect to both the problem and algorithm-based randomness. 
\end{definition} 

\textbf{Geodesics, retractions and exponential mappings.} Given two tangent vectors $\xi, \eta\in \T\M$, the Levi-Civita connection $\nabla:\T\M\times \T\M \rightarrow \T\M$, $(\xi,\eta)\rightarrow\nabla_{\xi}\eta\in \T\M$ is the ``directional differential" of $\eta$ along the direction of $\xi$, which is determined uniquely by the metric tensor $\langle\cdot, \cdot\rangle_x$. In Euclidean spaces, $\nabla_{\xi}\eta$ is just calculating the directional derivative of the vector field $\eta$ along $\xi$. For a Riemannian manifold $\M$, the geodesic $\gamma$ is a curve on $\M$ that satisfies $\nabla_{\gamma'} \gamma'=0$, i.e., the directional derivative along the tangent direction is always zero. Usually we find the geodesic with the initial value condition, $
\nabla_{\gamma'} \gamma'=0,\ \gamma(0)=x,\ \gamma'(0)=v,$ whose existence and uniqueness are locally guaranteed by the existence and uniqueness theorem for linear ODEs.

Given any curve $\gamma(t)$ on $\M$, one could calculate the length of the curve and define the distance between the two points $x,y\in\M$ respectively by $L(\gamma)\coloneqq\int_{a}^{b}\|\gamma'(t)\|_{\gamma(t)}dt\quad\text{and}\quad \mathsf{d}(x,y)\coloneqq\min_{\gamma,\gamma(a)=x,\gamma(b)=y}L(\gamma)$. If the manifold is a complete Riemannian manifold, according to \citep[Corollary 3.9]{do1992riemannian}, there exists a unique minimal geodesic $\gamma$ satisfying $\gamma(a)=x,\gamma(b)=y$ that minimizes $L(\gamma)$. Therefore, we can always calculate the distance with respect to the minimal geodesic as $    \mathsf{d}(x,y)=\int_{a}^{b}\|\gamma'(t)\|_{\gamma(t)}dt, \nabla_{\gamma'}\gamma'=0,\gamma(a)=x,\gamma(b)=y$,
which will be utilized in our error analysis in Section \ref{sec_avg_retr}.

A retraction mapping $\retr_x$ is a smooth mapping from $\T_x\M$ to $\M$ such that: $\retr_x(0)=x$, where $0$ is the zero element of $\T_x\M$, and the differential of $\retr_x$ at $0$ is an identity mapping, i.e., $\left.\frac{d \retr_x(t\eta)}{d t}\right|_{t=0}=\eta$, $\forall \eta\in \T_x\M$. In particular, the exponential mapping $\Exp_x$ on a Riemannian manifold is a retraction that generated by geodesics, i.e. $\Exp_{x}(t\xi)\coloneqq\gamma(t)$ where $\gamma$ is a geodesic with $\gamma(0)=x$ and $\gamma'(0)=\xi$. Notice that the retraction is not always injective from $\T_x\M$ to $\M$ for any point $x\in\M$, thus the existence of the inverse of the retraction function $\retr_{x}^{-1}$ is not guaranteed. However, when $\M$ is complete, the exponential mapping $\Exp_x$ is always defined for every $\xi\in \T_x\M$, and the inverse of the exponential mapping $\Exp_{x}^{-1}(y)\in \T_x\M$ is always well-defined for any $x,y\in\M$. Also, since $\Exp_{x}(t\xi)$ generates geodesics, we have $\mathsf{d}(x, \Exp_{x}(t\xi))=t\|\xi\|_x$. These are facts that we use in Assumption \ref{assumption0_2} and convergence proofs.

As an example, the retractions on Stiefel manifolds can be defined by the QR decomposition, $R_X(\xi):=Q$ where $X+\xi = QR$. It can also be defined through the Polar decomposition as $R_X(\xi):=U V^\top$, where $X+\xi = U\Sigma V^\top$ is the (thin) singular value decomposition of $X+\xi$. The geodesic on the Stiefel manifold is given by:
$
X(t)=\left[\begin{array}{ll}
X(0) & \dot{X}(0)
\end{array}\right] \exp \left(t\left[\begin{array}{cc}
A(0) & -S(0) \\
I & A(0)
\end{array}\right]\right)\left[\begin{array}{l}
I \\
0
\end{array}\right] \exp (-A(0) t),
$ for $A(t)=X^\top(t) \dot{X}(t)$ and $S(t)=\Dot{X}^\top(t)\dot{X}(t)$ with initial point $X(0)$ and initial speed $\dot{X}(0)$. The exponential mapping is thus given by $\Exp_{X(0)}(\dot{X}(0))=X(1)$. The computation cost of the QR and Polar decomposition retractions are of order $2dk^2+\mathcal{O}(k^3)$ and $3dk^2+\mathcal{O}(k^3)$, whereas as shown by \citet[Section 3]{chen2020proximal} the exponential mapping takes $8dk^2+\mathcal{O}(k^3)$, which illustrates the favorability of retractions in practical computations. We refer to \citet[Chapter 4]{absil2009optimization} and \citet[Chapter 3]{boumal2020introduction} for additional examples and more discussions on retractions and exponential mappings. 
\vspace{0.05in}

\textbf{Vector and parallel transport.} Vector transports are linear mappings from one tangent space to another, which can be formally defined below. 
\begin{definition}[Vector and parallel transport]\label{def_vec_para_trans}
    A vector transport $\mathcal{T}$ on a smooth manifold $\M$ is a smooth mapping
    $ \T\M\times \T\M\rightarrow \T\M:(\eta_x, \xi_x)\rightarrow \mathcal{T}_{\eta_x}(\xi_x)\in \T\M$, where the subscript $x$ means that the vector is in $\T_{x}\M$, such that: (i) There exists a retraction $R$ so that $\mathcal{T}_{\eta_x}(\xi_x)\in \T_{R_x(\eta_x)}\M$, (ii) $\mathcal{T}_{0_x}\xi_x = \xi_x$ for all $\xi_x\in \T_x\M$, and (iii) $\mathcal{T}_{\eta_x}(a\xi_x+b\zeta_x) = a\mathcal{T}_{\eta_x}(\xi_x)+b\mathcal{T}_{\eta_x}(\zeta_x)$, i.e., linearity.  Particularly, for a complete Riemannian manifold $(\M, \langle\cdot,\cdot\rangle)$, we can construct a special vector transport, namely the parallel transport $P$, that can map vectors to another tangent space ``parallelly'', i.e., $\forall \eta,\xi\in \T_x\M$ and $y\in\M$,
    \begin{align}\label{eq_isometry_parallel_trans}
        \langle P_{\Exp_{x}^{-1}(y)}(\eta), P_{\Exp_{x}^{-1}(y)}(\xi) \rangle_y=\langle \eta, \xi \rangle_x.
    \end{align}
    Notice that parallel transport is not the only transport that satisfies \eqref{eq_isometry_parallel_trans}, and we call the vector transport an isometric vector transport if it satisfies \eqref{eq_isometry_parallel_trans}.
    
\end{definition}
We can equivalently view $P$ as a mapping from the tangent space $\T_{x}\M$ to $\T_y\M$. We hence denote $P_{x}^{y}:\T_{x}\M\rightarrow \T_{y}\M$. Note that parallel transport depends on the curve along which the vectors are moving. If the curve is not specified, it refers to the case when we are considering the minimal geodesic connecting the two points, which exists due to completeness.

As an example, for the Stiefel manifold in~\eqref{stiefel}, there is no closed-form expression for the parallel transport, whereas one can always utilize the projection onto the tangent space, given by $\proj_{\T_X\M}(\xi) = (I-X X^\top)\xi + X\operatorname{skew}(X^\top \xi)$, where $\operatorname{skew}(A):=(A-A^\top)/2$, to transport $\xi\in \T_{X_0}\St(d, p)$ to $\T_{X}\St(d, p)$. We refer to \citet[Chapter 8]{absil2009optimization} and \citet[Chapter 10]{boumal2020introduction} for additional examples and more discussions on vector and parallel transports. \vspace{0.05in}

\textbf{Second fundamental form.} We now discuss the notion of second fundamental form, which will be helpful in characterizing a geometric condition used in Section \ref{sec_avg_retr} to quantify the error of approximating parallel transports with vector transports. In general, the notion of second fundamental form can be studied for general isometric immersions and we restrict here to the embedding in Euclidean spaces only for brevity. 
\begin{definition}[Second fundamental form]\label{def_2nd_fund_form}
  Suppose $\M\subset\RR^D$ is a complete Riemannian manifold equipped with the Euclidean metric. For any $\xi, \eta\in \T\M$, denote the extension of two vector fields to $\RR^D$ as $\bxi,\bet\in \RR^D$, also the directional derivative of $\Bar{\eta}$ along $\Bar{\xi}$ as $\Bar{\nabla}_{\bxi}\bet\in\RR^D$. The second fundamental form refers to the bilinear and symmetric vector, $
B(\xi,\eta) = \Bar{\nabla}_{\bxi}\bet - \nabla_{\xi}\eta \in (\T\M)^\bot,$ which quantifies the deviation of the Riemannian directional derivatives (depicted by Levi-Civita connection $\nabla$) from the Euclidean one (common directional derivative $\Bar{\nabla}$).   
\end{definition}
Finally, we remark that there are various definitions of second fundamental forms, among which the most common one is a quadratic form related to $B$; see~\citep[Chapter 6, Definition 2.2]{do1992riemannian}. Here we simply refer to $B$ as the second fundamental form.

\section{Zeroth-order RASA for smooth manifold optimization}\label{sec_smooth_avg}

We now introduce the Zeroth-order Riemannian Average Stochastic Approximation (\texttt{Zo-RASA}) algorithm for solving \eqref{stochastic_problem}. The formal procedure is stated in Algorithm~\ref{algorithm2}, where $P_{x}^{y}$ is the parallel transport from $\T_{x}\M$ to $\T_{y}\M$ along the minimum geodesic connecting $x$ and $y$. To establish the sample complexity of Algorithm~\ref{algorithm2}, we extend the analysis of~\cite{ghadimi2020single}, which is in-turn motivated by the lifting-technique introduced in~\cite{ruszczynski1983stochastic,ruszczynski1987linearization}, to the Riemannian setting. As such works heavily rely on the Euclidean structure, our proofs involve a non-trivial adaption of such techniques.

\begin{algorithm}[t]
   \caption{Zo-RASA}
   \label{algorithm2}
\begin{algorithmic}[1]
   \STATE {\bfseries Input:} Initial point $x^0\in\M$, $g^0=G_{\mu}^{\Exp}(x^0)$, total number of iterations $N$, parameters $\beta>0$, $\tau_0=1$, $\tau_k=1/\sqrt{N}$ or $\tau_k=1/\sqrt{dN}$ when $k\geq 1$, and stepsize $t_k=\tau_k/\beta$.
   \FOR{$k=0,1,2,\ldots,N-1$}
   \STATE $x^{k+1}\leftarrow \Exp_{x^{k}}(- t_k g^{k}) $
   \STATE $g^{k+1}\leftarrow (1-\tau_k) P_{x^{k}}^{x^{k+1}} g^{k} + \tau_k {P_{x^{k}}^{x^{k+1}} G_{\mu}^k}$ where $G_{\mu}^k=G_{\mu}^{\Exp}(x^k)$ is given by \eqref{zeroth_order_estimator} with batch-size $m=m_k$
   \ENDFOR
\end{algorithmic}
\end{algorithm}

In our convergence analysis, we always choose $\tau_0=1$, and we consider two choices of $\tau_k$ when $k\geq 1$: 
\begin{equation}\label{tau-2-choices}
\tau_k =1/\sqrt{N} \mbox{ or } \tau_k=1/\sqrt{d N}, k\geq 1,
\end{equation}
which corresponds to large or single batch, respectively. Moreover, we always choose $t_k=\tau_k/\beta$, where $\beta$ is a positive constant determined by the smoothness constant in Assumption \ref{assumption0_2} (see Theorem \ref{theorem3}), so that the step-size and the averaging weights are in the same order. Furthermore, we define 
    \begin{equation}\label{def-Gamma}
        \Gamma_{0}=\Gamma_{1} = 1, \mbox{ and } \Gamma_k=\Gamma_1\prod_{i=1}^{k-1}(1-\tau_i^2).
    \end{equation}
This leads to the following inequalities which will be used frequently in our convergence analysis:
\begin{equation}\label{tau-conditions}
    \sum_{i=k+1}^{N} \tau_{i}\Gamma_{i}\leq \Gamma_{k+1} \mbox{ and } \sum_{i=k+1}^{N} \tau_{i}^2\Gamma_{i}\leq \tau_k\Gamma_{k+1}.
\end{equation}

To proceed, we construct the following potential function
\begin{align}\label{eta_def}
W(x,g) \coloneqq (f(x) - f^*) - \eta(x,g),\quad\text{where}\quad \eta(x,g)\coloneqq-\frac{1}{2\beta}\|g\|_x^2,\ g\in \T_x\M,
\end{align}
where $f^*=\min_{x\in\M}f(x)$ and $\beta>0$ is a constant to be determined later. Note that the potential function in~\eqref{eta_def} has the component of both function value and the norm of the (estimated) gradients, also that $W$ is always non-negative. In our analysis, we proceed by bounding the difference of potential function between successive iterates. More specifically, using the convexity of the norm, for any pair $(x,g)$, we have $
\|\grad f(x)\|_{x}^2\leq -2\beta\,\eta(x,g) + 2\|g-\grad f(x)\|_{x}^2$. This observation will be leveraged in the proof of Theorem \ref{theorem3} to obtain the sample complexity of Algorithm~\ref{algorithm2} for obtaining an $\epsilon$-approximate stationary solution. 

We also highlight that our convergence analysis extensively utilizes the isometry property of parallel transport, stated in \eqref{eq_isometry_parallel_trans}, i.e., $\langle P_{x}^{y}(\eta), P_{x}^{y}(\xi) \rangle_y=\langle \eta, \xi \rangle_x$. This result is a generalization of the isometry in the Euclidean spaces, since the inner product in Euclidean spaces is unchanged if one moves the beginning point of the vectors together. A direct result of this identity is that the length of the vectors is unchanged, namely $\|P_{x}^{y}(\xi)\|_{y}=\|\xi\|_{x}$, which we will also use extensively.

We now introduce the assumptions needed for our analysis.
\begin{assumption}\label{assumption0_2}
   The function $f:\mathcal{M}\to\mathbb{R}$ is $L$-smooth on $\mathcal{M}$, i.e., $\forall x,y\in\mathcal{M}$, we have $\|P_{x}^{y}\grad f(x) - \grad f(y)\|_{y}\leq L\,\mathsf{d}(x,y)$. An immediate consequence (see, for example, \citet[Proposition 10.53]{boumal2020introduction}) of this condition is that we have  $|f(y)-f(x) - \langle \grad f(x), \Exp_{x}^{-1}(y) \rangle_{x}|\leq \frac{L}{2}\|\Exp_{x}^{-1}(y) \|_{x}^2$.
\end{assumption}

Assumption \ref{assumption0_2} is a generalization of the standard gradient-Lipschitz assumption in Euclidean optimization~\citep{nesterov2018lectures,lan2020first} to the Riemannian setting, and is made in several works~\citep{boumal2020introduction}. To generalize it to the Riemannian setting, due to the fact that $\grad f(x)$ and $\grad f(y)$ are not in the same tangent space, we need to utilize parallel transports $P_{x}^{y}$ to match the two vectors in the same tangent space. 

Throughout the paper, we define $\mathcal{F}_{k}$ as the $\sigma$-algebra generated by all the randomness till iteration $k$ of the algorithms. Namely, for Algorithm~\ref{algorithm2}, we have $\mathcal{F}_{k}=\sigma(\xi_0,\ldots,\xi_k,x_0,\ldots, x_k, g_0,\ldots, g_k)$.

\begin{assumption}\label{assumption0_1}
   Along the trajectory of the algorithm, the stochastic gradients are unbiased and have bounded-variance, i.e., for $k \in \{1,\ldots, N\}$, we have $\E_\xi [\grad F(x^k;\xi_k)|\mathcal{F}_{k-1}] = \grad f(x^k)$ and $\E_\xi[\|\grad F(x^k;\xi_k) - \grad f(x^k)\|_{x^{k}}^2|\mathcal{F}_{k-1}]\leq \sigma^2.$
\end{assumption}
The above assumption is widely used in stochastic Riemannian optimization literature; see, for example,~\cite{zhang2016riemannian,li2022stochastic,boumal2020introduction}, and generalizes the standard assumption used in Euclidean stochastic optimization~\citep{nesterov2018lectures,lan2020first}.

Now we proceed to the convergence analysis of Algorithm~\ref{algorithm2}. We first state the following standard result characterizing the approximation error of $G_{\mu}^{\Exp}$ (given by \eqref{zeroth_order_estimator}) to the true Riemannian gradient. 
\begin{lemma}[Proposition 1 in \cite{li2022stochastic} with exponential mapping]\label{bound_zo_estimator}
    Under Assumptions \ref{assumption0_2}, \ref{assumption0_1} we have $\|\E G_{\mu}^{\Exp}(x) - \grad f(x)\|_{x}^2 \leq \frac{\mu^2L^2}{4}(d+3)^3$, $\E\|G_{\mu}^{\Exp}(x)\|_{x}^2 \leq \mu^2 L^2(d + 6)^3 + 2(d+4) \|\grad f(x)\|_{x}^2$ and $\E\|G_{\mu}^{\Exp}(x) - \grad f(x)\|_{x}^2 \leq \mu^2 L^2(d + 6)^3 + \frac{8(d+4)}{m}\sigma^2+\frac{8(d+4)}{m}\|\grad f(x)\|_{x}^2$, where the expectation is taken toward all the Gaussian vectors in $G_{\mu}$ and the random variable $\xi$.
\end{lemma}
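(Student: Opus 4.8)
The plan is to reduce all three estimates to the standard Gaussian-smoothing bounds of Nesterov--Spokoiny, using the observation that once the base point $x$ is fixed the randomness $u$ lives in the single Euclidean space $\T_x\M\cong\RR^d$ (with inner product $\langle\cdot,\cdot\rangle_x$), so that $u\mapsto f(\Exp_x(\mu u))$ is an ordinary smooth function on $\RR^d$. The only genuinely Riemannian input is the identity $\|\Exp_x^{-1}(\Exp_x(\mu u))\|_x=\mu\|u\|_x$ recorded in Section~\ref{sec_manifold_basics}, which turns the consequence of Assumption~\ref{assumption0_2} into the pointwise quadratic bound
\begin{equation}\label{eq:quad-sketch}
\left| f(\Exp_x(\mu u)) - f(x) - \mu\langle \grad f(x), u\rangle_x \right| \le \frac{L}{2}\,\mu^2\,\|u\|_x^2 .
\end{equation}
All remaining work is Gaussian moment arithmetic on $\T_x\M$, namely $\E[uu^\top]=\ident$, $\E\|u\|_x^2=d$, $\E\|u\|_x^3\le(d+3)^{3/2}$, $\E\|u\|_x^4=d(d+2)$ and $\E\|u\|_x^6\le(d+6)^3$.

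For the first (bias) bound I would push the expectation over $\xi$ inside using unbiasedness $\E_\xi F(\cdot,\xi)=f(\cdot)$ and the independence of the $m$ samples, reducing $\E G_\mu^{\Exp}(x)$ to the single-sample quantity $\E_u[(f(\Exp_x(\mu u))-f(x))u/\mu]$. Since $\E[uu^\top]=\ident$ gives $\grad f(x)=\E_u[\langle\grad f(x),u\rangle_x\,u]$, the error $\E G_\mu^{\Exp}(x)-\grad f(x)$ equals the expectation of $\mu^{-1}(f(\Exp_x(\mu u))-f(x)-\mu\langle\grad f(x),u\rangle_x)\,u$, whose norm is controlled pointwise by \eqref{eq:quad-sketch} as $\tfrac{L\mu}{2}\|u\|_x^3$; taking expectations, using $\E\|u\|_x^3\le(d+3)^{3/2}$, and squaring yields $\tfrac{\mu^2L^2}{4}(d+3)^3$.

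For the second-moment bound I would decompose each summand via \eqref{eq:quad-sketch} as $\langle\grad f(x),u\rangle_x u + r(u)u$ with $\|r(u)\|\le\tfrac{L\mu}{2}\|u\|_x^2$, apply $\|a+b\|^2\le 2\|a\|^2+2\|b\|^2$, evaluate $\E\|\langle\grad f(x),u\rangle_x u\|_x^2$ by the fourth-moment identity (yielding the $2(d+4)\|\grad f(x)\|_x^2$ term) and $\E[\|u\|_x^4\|u\|_x^2]=\E\|u\|_x^6\le(d+6)^3$ for the remainder (yielding $\mu^2L^2(d+6)^3$). The third (MSE) bound then follows by splitting $G_\mu^{\Exp}(x)-\grad f(x)$ into the smoothing bias, a zero-mean direction-noise part and a zero-mean stochastic-noise part governed by Assumption~\ref{assumption0_1}; the cross terms vanish in conditional expectation, averaging over $m$ i.i.d.\ samples divides the two fluctuation contributions by $m$, and the smoothing error survives without a $1/m$ factor since it upper-bounds the non-averaging bias contribution. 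Collecting the constants gives $\mu^2L^2(d+6)^3+\tfrac{8(d+4)}{m}\sigma^2+\tfrac{8(d+4)}{m}\|\grad f(x)\|_x^2$.

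The step needing the most care is \eqref{eq:quad-sketch}, which is where the geometry enters: it uses that $\Exp_x$ is the geodesic exponential, so that $\Exp_x^{-1}(\Exp_x(\mu u))=\mu u$ and $\mathsf{d}(x,\Exp_x(\mu u))=\mu\|u\|_x$. Strictly this requires $\mu u$ to lie inside the injectivity radius, whereas $u$ has unbounded Gaussian support; I would either absorb the exponentially small tail contribution or, since this statement is exactly \citet[Proposition~1]{li2022stochastic}, simply cite it. Once \eqref{eq:quad-sketch} is in place, the argument is structurally identical to the Euclidean Nesterov--Spokoiny analysis carried out on the fixed tangent space $\T_x\M$.
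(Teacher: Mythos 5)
Your proposal is correct and follows essentially the same route as the source: the paper does not reprove this lemma but imports it verbatim as Proposition~1 of \cite{li2022stochastic}, whose proof is exactly the tangent-space Nesterov--Spokoiny argument you outline (the pointwise quadratic bound via $\Exp_x^{-1}(\Exp_x(\mu u))=\mu u$ plus Gaussian moment bounds, with the $1/m$ averaging applied only to the zero-mean fluctuation terms). Your flagged caveat about the injectivity radius is the right thing to be careful about, and deferring it to the cited proposition is exactly what the paper does.
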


Based on the above result, we have the following Lemma \ref{lemma_zo_gk_gradk} which bounds the difference of $g^k$ to the true Riemannian gradient $\grad f(x^k)$, and Lemma \ref{lemma_zo_sum_gkplus1_gk} bounds the difference of two consecutive $g^k$, where we use parallel transport to make $g^k$ and $g^{k+1}$ in the same tangent space, i.e., $\|P_{x^{k+1}}^{x^{k}}g^{k+1} - g^k\|_{x^k}^2$.

\begin{lemma}\label{lemma_zo_gk_gradk}
    Suppose the Assumptions \ref{assumption0_2} and \ref{assumption0_1} hold, and $\{x^k,g^k\}$ is generated by Algorithm~\ref{algorithm2}. 
    We have
    \begin{align}\label{zo_gk_gradk_bound}
    & \E\|g^{k} - \grad f(x^{k})\|_{x^{k}}^2 \\ \leq & 
        \Gamma_{k}\tilde{\sigma}_0^2 + \Gamma_{k}\sum_{i=1}^{k}\Big( \frac{(1+\tau_{i-1})\tau_{i-1}}{\Gamma_{i}}\frac{L^2\|g^{i-1}\|_{x^{i-1}}^2}{\beta^2}+\frac{\tau_{i-1}^2}{\Gamma_{i}}\tilde{\sigma}_{i-1}^2+\tau_k\hat{\sigma}^2 \Big),\nonumber 
    \end{align}
where the expectation $\E$ is taken with respect to all random variables up to iteration $k$, including the random variables $\{u_i\}_{i=1}^k$ used to construct the zeroth-order estimator as in \eqref{zeroth_order_estimator}. Here the notations are defined as:
\begin{align}\label{def-tilde-sigma}
\begin{aligned}
    \hat{\sigma}^2&\coloneqq \frac{\mu^2L^2}{4}(d+3)^3\\
    \tilde{\sigma}_k^2&\coloneqq \sigma_{k}^2+\frac{8(d+4)}{m_k}\E\|\grad f(x^{k})\|_{x^k}^2\text{ where }\sigma_{k}^2\coloneqq\mu^2 L^2(d + 6)^3 + \frac{8(d+4)}{m_k}\sigma^2.
\end{aligned}
\end{align} 
Moreover, from \eqref{tau-conditions} we have 
    \begin{equation*}
    \begin{split}
        \sum_{k=1}^{N}\tau_k\E\|g^{k} - \grad f(x^{k})\|_{x^{k}}^2 \leq \sum_{k=0}^{N-1}\bigg( (1+\tau_{k})\tau_{k}\frac{L^2\E\|g^{k}\|_{x^{k}}^2}{\beta^2} + \tau_{k}^2\tilde{\sigma}_k^2+\tau_k\hat{\sigma}^2 \bigg)+\tilde{\sigma}_0^2, \\
        \sum_{k=1}^{N}\tau_k^2\E\|g^{k} - \grad f(x^{k})\|_{x^{k}}^2 \leq \sum_{k=0}^{N-1}\bigg( (1+\tau_{k})\tau^2_{k}\frac{L^2\E\|g^{k}\|_{x^{k}}^2}{\beta^2} + \tau_{k}^3\tilde{\sigma}_k^2+\tau^2_k\hat{\sigma}^2 \bigg)+ \sum_{k=1}^{N}\tau_k^2\tilde{\sigma}_0^2.
    \end{split}
    \end{equation*}
\end{lemma}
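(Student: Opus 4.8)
The plan is to track the gradient-tracking error $e^k := g^k - \grad f(x^k)$ and to derive a one-step recursion for $\E\|e^k\|_{x^k}^2$ that contracts by the factor $(1-\tau_k^2)$ matching the definition of $\Gamma_k$ in~\eqref{def-Gamma}, and then to telescope. First I would rewrite Line~4 of Algorithm~\ref{algorithm2} by inserting the reference vector $P_{x^k}^{x^{k+1}}\grad f(x^k)$: since $(1-\tau_k)g^k+\tau_k G_\mu^k = \grad f(x^k) + (1-\tau_k)e^k + \tau_k\big(G_\mu^k-\grad f(x^k)\big)$, applying $P_{x^k}^{x^{k+1}}$ and subtracting $\grad f(x^{k+1})$ gives
\begin{equation*}
e^{k+1} = (1-\tau_k)\,P_{x^k}^{x^{k+1}}e^k + \tau_k\,P_{x^k}^{x^{k+1}}\big(G_\mu^k-\grad f(x^k)\big) + \big(P_{x^k}^{x^{k+1}}\grad f(x^k)-\grad f(x^{k+1})\big),
\end{equation*}
whose three terms are the carried-over error, the fresh estimation error, and a smoothness term. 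Because $x^{k+1}=\Exp_{x^k}(-t_k g^k)$ traces a geodesic, $\mathsf{d}(x^k,x^{k+1})=t_k\|g^k\|_{x^k}=(\tau_k/\beta)\|g^k\|_{x^k}$, so Assumption~\ref{assumption0_2} bounds the last term in norm by $(L\tau_k/\beta)\|g^k\|_{x^k}$.

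Next I would square and condition on $\mathcal F_{k-1}$, which determines $x^k,g^k,x^{k+1}$ and the transport $P_{x^k}^{x^{k+1}}$ but not the fresh randomness in $G_\mu^k$. Splitting $G_\mu^k-\grad f(x^k)$ into its conditional mean (the bias, of squared norm at most $\hat\sigma^2$ by Lemma~\ref{bound_zo_estimator}) and a conditionally mean-zero fluctuation, the cross terms with the fluctuation vanish in conditional expectation and contribute only its conditional variance, which after taking full expectation is at most $\tau_k^2\tilde\sigma_k^2$ by Lemma~\ref{bound_zo_estimator} and the definitions in~\eqref{def-tilde-sigma}. Every occurrence of $P_{x^k}^{x^{k+1}}$ is then discharged using the isometry~\eqref{eq_isometry_parallel_trans}, so that $\|P_{x^k}^{x^{k+1}}e^k\|_{x^{k+1}}=\|e^k\|_{x^k}$ and inner products are preserved across the two tangent spaces.

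It then remains to expand the deterministic part $\|(1-\tau_k)P_{x^k}^{x^{k+1}}e^k + \tau_k P_{x^k}^{x^{k+1}}\bar b_k + c_k\|^2$, where $\bar b_k$ denotes the (transported) bias and $c_k$ the smoothness term, and to apply Young's inequality to the two cross terms with parameters chosen so that the coefficient of $\|e^k\|_{x^k}^2$ is lifted from $(1-\tau_k)^2$ exactly to $(1-\tau_k)(1+\tau_k)=1-\tau_k^2$, while the residuals become $(1+\tau_k)\tau_k(L^2/\beta^2)\|g^k\|_{x^k}^2$ (using $\|c_k\|^2\le(L^2\tau_k^2/\beta^2)\|g^k\|_{x^k}^2$) and a single-power bias term $\tau_k\hat\sigma^2$. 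This yields the one-step bound
\begin{equation*}
\E\|e^{k+1}\|_{x^{k+1}}^2 \le (1-\tau_k^2)\,\E\|e^k\|_{x^k}^2 + (1+\tau_k)\tau_k\frac{L^2}{\beta^2}\E\|g^k\|_{x^k}^2 + \tau_k^2\tilde\sigma_k^2 + \tau_k\hat\sigma^2,
\end{equation*}
valid for $k\ge1$; the step $k=0$ is a special base case, since $\tau_0=1$ kills the carried-over term and leaves $\E\|e^1\|^2$ controlled by $\tilde\sigma_0^2$ plus the $\|g^0\|_{x^0}^2$ smoothness term, consistent with $\Gamma_0=\Gamma_1=1$. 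Dividing by $\Gamma_{k+1}=\Gamma_k(1-\tau_k^2)$, telescoping, and reindexing produces~\eqref{zo_gk_gradk_bound}. The two summation consequences then follow by multiplying~\eqref{zo_gk_gradk_bound} by $\tau_k$ (resp.\ $\tau_k^2$), summing over $k$, interchanging the order of the resulting double sum, and collapsing the inner sums through $\sum_{k\ge i}\tau_k\Gamma_k\le\Gamma_i$ and $\sum_{k\ge i}\tau_k^2\Gamma_k\le\tau_{i-1}\Gamma_i$, which are precisely~\eqref{tau-conditions}.

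I expect the main obstacle to be the bookkeeping forced by the manifold geometry: unlike the Euclidean case of~\cite{ghadimi2020single}, the vectors $g^k,g^{k+1},\grad f(x^k),\grad f(x^{k+1})$ live in distinct tangent spaces, so each step of the decomposition must insert the correct parallel transport and then remove it through the isometry~\eqref{eq_isometry_parallel_trans}. Pairing this with the \emph{biasedness} of the zeroth-order estimator, which—unlike ordinary stochastic gradients—forces a separate mean/bias split and makes the bias enter at the $\tau_k$ rather than $\tau_k^2$ level, is where the argument genuinely departs from the Euclidean template. The other delicate point is tuning the Young's parameters so that the contraction lands exactly on $1-\tau_k^2$ (demanded by $\Gamma_k$) rather than a looser $1-\tau_k$.
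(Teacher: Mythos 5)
Your proposal is correct and follows essentially the same route as the paper's proof: the same three-term decomposition of $g^{k}-\grad f(x^{k})$ via parallel transport, the same treatment of the biased cross term yielding the $\tau_k\hat\sigma^2$ contribution and the exact $(1-\tau_k^2)$ contraction, followed by division by $\Gamma_k$, telescoping, and the sum interchange using \eqref{tau-conditions}. The only cosmetic difference is that you split $G_\mu^k-\grad f(x^k)$ explicitly into bias plus a conditionally mean-zero fluctuation, whereas the paper bounds the cross term directly against the conditional mean and the full second moment of $\Delta_{k-1}^f$; the resulting one-step recursion is identical.
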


\begin{proof}
    Firstly, note that we have the following: $g^{k} - \grad f(x^{k}) =  (1-\tau_{k-1}) P_{x^{k-1}}^{x^{k}} g^{k-1} + \tau_{k-1} P_{x^{k-1}}^{x^{k}} G_{\mu}^{k-1} - \grad f(x^{k}) =(1-\tau_{k-1}) P_{x^{k-1}}^{x^{k}}(g^{k-1} - \grad f(x^{k-1})) + (P_{x^{k-1}}^{x^{k}}\grad f(x^{k-1}) - \grad f(x^{k}))+ \tau_{k-1} P_{x^{k-1}}^{x^{k}}(G_{\mu}^{k-1} - \grad f(x^{k-1}))=(1-\tau_{k-1}) P_{x^{k-1}}^{x^{k}}(g^{k-1} - \grad f(x^{k-1})) + \tau_{k-1} e_{k-1} + \tau_{k-1}\Delta_{k-1}^{f}$. Hence, we have 
    \begin{align}\label{zo_temp1}
    \begin{aligned}
    & \|g^{k} - \grad f(x^{k})\|_{x^{k}}^2 \\ 
    \leq &(1-\tau_{k-1})\|g^{k-1} - \grad f(x^{k-1})\|_{x^{k-1}}^2 + \tau_{k-1}\|e_{k-1}\|_{x^{k}}^2+\tau_{k-1}^2\|\Delta_{k-1}^{f}\|_{x^{k}}^2\\
        & + 2\tau_{k-1}\langle (1-\tau_{k-1})P_{x^{k-1}}^{x^{k}}(g^{k-1} - \grad f(x^{k-1})) + \tau_{k-1}e_{k-1}, \Delta_{k-1}^{f}\rangle_{x^{k}},
    \end{aligned}
    \end{align}
    where the notation is defined as $ e_{k-1} \coloneqq \frac{1}{\tau_{k-1}}(P_{x^{k-1}}^{x^{k}}\grad f(x^{k-1}) - \grad f(x^{k})),$ and $ \Delta_{k-1}^{f} \coloneqq P_{x^{k-1}}^{x^{k}} (G_{\mu}^{k-1} - \grad f(x^{k-1}))$. Denote $\delta_{k-1}=\langle (1-\tau_{k-1})P_{x^{k-1}}^{x^{k}}(g^{k-1} - \grad f(x^{k-1})) + \tau_{k-1}e_{k-1}, \Delta_{k-1}^{f}\rangle_{x^{k}}$. The main novelty in the proof of this lemma is that $\delta$ is no longer an unbiased estimator (which is true for the first-order situation). We have by Lemma \ref{bound_zo_estimator} that
    \begin{equation*}
    \begin{split}
        &2\E_{u^k}[\delta_{k-1}] = 2\langle (1-\tau_{k-1})P_{x^{k-1}}^{x^{k}}(g^{k-1} - \grad f(x^{k-1})) + \tau_{k-1}e_{k-1}, \E_{u^k}[\Delta_{k-1}^{f}|\mathcal{F}_{k-2}]\rangle_{x^{k}} \\
        \leq & \|(1-\tau_{k-1})P_{x^{k-1}}^{x^{k}}(g^{k-1} - \grad f(x^{k-1})) + \tau_{k-1}e_{k-1}\|_{x^{k} }^2 + \|\E_{u^k} G_{\mu}^{k-1} - \grad f(x^{k-1})\|_{x^{k-1}}^2 \\
        \leq & (1-\tau_{k-1})\|g^{k-1} - \grad f(x^{k-1})\|_{x^{k-1}}^2 + \tau_{k-1}\|e_{k-1}\|_{x^{k} }^2 + \hat{\sigma}^2.
    \end{split}
    \end{equation*}
    Notice that in the above computation, the expectation is only taken with respect to the Gaussian random variables that we used to construct $G_{\mu}(x^{k-1})$. Plugging this back to \eqref{zo_temp1}, we have  $ \E_{u^k}\|g^{k} - \grad f(x^{k})\|_{x^{k}}^2\leq  \tau_{k-1}\hat{\sigma}^2+ (1-\tau_{k-1}^2)\|g^{k-1} - \grad f(x^{k-1})\|_{x^{k-1}}^2 + \tau_{k-1}(1+\tau_{k-1})\|e_{k-1}\|_{x^{k}}^2 + \tau_{k-1}^2\|\Delta_{k-1}^{f}\|_{x^{k}}^2$.   Now dividing both sides of this inequality by our new definition of $\Gamma_k$, we get $\frac{1}{\Gamma_k}\E_{u^k}\|g^{k} - \grad f(x^{k})\|_{x^{k}}^2\leq \frac{1}{\Gamma_{k-1}}\|g^{k-1} - \grad f(x^{k-1})\|_{x^{k-1}}^2 + \frac{(1+\tau_{k-1})\tau_{k-1}}{\Gamma_k}\|e_{k-1}\|_{x^{k}}^2 + \frac{\tau_{k-1}^2}{\Gamma_k}\|\Delta_{k-1}^{f}\|_{x^{k}}^2 + \frac{\tau_{k-1}}{\Gamma_k}\hat{\sigma}^2$.

By Assumptions \ref{assumption0_2}, \ref{assumption0_1} and Lemma \ref{bound_zo_estimator}, we have that $\|e_{i}\|_{x^{i+1}}^2\leq \frac{L^2}{\tau_i^2}\mathsf{d}(x^i,x^{i+1})^2 \leq \frac{L^2 t_i^2\|g^i\|_{x^i}^2}{\tau_i^2} = \frac{L^2\|g^i\|_{x^i}^2}{\beta^2}$, and 
$ \E[\|\Delta_{i}^{f}\|_{x^{i+1}}^2|\mathcal{F}_{i-1}]\leq \sigma_i^2+\frac{8(d+4)}{m_i}\E[\|\grad f(x^i)\|_{x^i}^2|\mathcal{F}_{i-1}]$. Hence, by applying law of total expectation (to take the expectation over all random variables), we have $
        \frac{1}{\Gamma_k}\E\|g^{k} - \grad f(x^{k})\|_{x^{k}}^2
        \leq \frac{1}{\Gamma_{k-1}}\E\|g^{k-1} - \grad f(x^{k-1})\|_{x^{k-1}}^2 + \frac{(1+\tau_{k-1})\tau_{k-1}}{\Gamma_k}\frac{L^2\E\|g^{k-1}\|_{x^{k-1}}^2}{\beta^2} + \frac{\tau_{k-1}^2}{\Gamma_k}\Tilde{\sigma}_{k-1}^2 + \frac{\tau_{k-1}}{\Gamma_k}\hat{\sigma}^2$. Now by telescoping the sum in the above equation, we get (note that we take $g^0=G_{\mu}(x^0)$)
    \begin{align*}
    \begin{aligned}
        &\E\|g^{k} - \grad f(x^{k})\|_{x^{k}}^2\leq \Gamma_{k}\E\|G_{\mu}(x^0) - \grad f(x^{0})\|_{x^{0}}^2 \\
        &+ \Gamma_{k}\sum_{i=1}^{k}\bigg( \frac{(1+\tau_{i-1})\tau_{i-1}}{\Gamma_{i}}\frac{L^2\E\|g^{i-1}\|_{x^{i-1}}^2}{\beta^2} + \frac{\tau_{i-1}^2}{\Gamma_{i}}\Tilde{\sigma}_{i-1}^2 + \frac{\tau_{i-1}}{\Gamma_i}\hat{\sigma}^2 \bigg) \\
        &\leq \Gamma_{k}\tilde{\sigma}_0^2+ \Gamma_{k}\sum_{i=1}^{k}\bigg( \frac{(1+\tau_{i-1})\tau_{i-1}}{\Gamma_{i}}\frac{L^2\E\|g^{i-1}\|_{x^{i-1}}^2}{\beta^2} + \frac{\tau_{i-1}^2}{\Gamma_{i}}\Tilde{\sigma}_{i-1}^2 + \frac{\tau_{i-1}}{\Gamma_i}\hat{\sigma}^2 \bigg).
    \end{aligned}
    \end{align*}
    This proves \eqref{zo_gk_gradk_bound}. From \eqref{tau-conditions} we have
    \begin{align*}
    \begin{split}
        & \sum_{k=1}^{N}\tau_k\E\|g^{k} - \grad f(x^{k})\|_{x^{k}}^2 \\
        \leq &\sum_{k=1}^{N}\tau_k\Gamma_{k} \sum_{i=1}^{k}\bigg( \frac{(1+\tau_{i-1})\tau_{i-1}}{\Gamma_{i}}\frac{L^2\E\|g^{i-1}\|_{x^{i-1}}^2}{\beta^2} + \frac{\tau_{i-1}^2}{\Gamma_{i}}\Tilde{\sigma}_{i-1}^2 + \frac{\tau_{i-1}}{\Gamma_i}\hat{\sigma}^2 \bigg)+\tilde{\sigma}_0^2 \\
        = & \sum_{k=0}^{N-1}\bigg(\sum_{i=k+1}^{N}\tau_i\Gamma_{i}\bigg)\frac{1}{\Gamma_{k+1}}\bigg( (1+\tau_{k})\tau_{k}\frac{L^2\E\|g^{k}\|_{x^{k}}^2}{\beta^2} + \tau_{k}^2\Tilde{\sigma}_{k}^2 + \tau_{k}\hat{\sigma}^2 \bigg)+\tilde{\sigma}_0^2\\
        \leq & \sum_{k=0}^{N-1}\bigg( (1+\tau_{k})\tau_{k}\frac{L^2\E\|g^{k}\|_{x^{k}}^2}{\beta^2} + \tau_{k}^2\Tilde{\sigma}_{k}^2 +\tau_k\hat{\sigma}^2 \bigg)+\tilde{\sigma}_0^2,
    \end{split}
    \end{align*}
    where we used $\sum_{k=1}^{N}\tau_k\Gamma_{k}\leq\Gamma_{1} = 1$ 
    due to \eqref{tau-conditions}, so that the last term is simply $\tilde{\sigma}_0^2$. 

    By using similar calculations, we have that
    \begin{align*}
        &\sum_{k=1}^{N}\tau_k^2\E\|g^{k} - \grad f(x^{k})\|_{x^{k}}^2\leq\\
        &\sum_{k=1}^{N}\tau_k^2\Gamma_{k} \sum_{i=1}^{k}\bigg( \frac{(1+\tau_{i-1})\tau_{i-1}}{\Gamma_{i}}\frac{L^2\E\|g^{i-1}\|_{x^{i-1}}^2}{\beta^2}+ \frac{\tau_{i-1}^2}{\Gamma_{i}}\Tilde{\sigma}_{i-1}^2 + \frac{\tau_{i-1}}{\Gamma_{i}}\hat{\sigma}^2  \bigg)+ \sum_{k=1}^{N}\tau_k^2\tilde{\sigma}_0^2\\
        = & \sum_{k=0}^{N-1}\left(\sum_{i=k+1}^{N}\tau_i^2\Gamma_{i}\right)\frac{1}{\Gamma_{k+1}}\bigg( (1+\tau_{k})\tau_{k}\frac{L^2\E\|g^{k}\|_{x^{k}}^2}{\beta^2}+\tau_{k}^2\Tilde{\sigma}_{k}^2  + {\tau_{k}}\hat{\sigma}^2 \bigg)+ \sum_{k=1}^{N}\tau_k^2\tilde{\sigma}_0^2\\
        \leq & \sum_{k=0}^{N-1}\tau_k\bigg( (1+\tau_{k})\tau_{k}\frac{L^2\E\|g^{k}\|_{x^{k}}^2}{\beta^2} + \tau_{k}^2\Tilde{\sigma}_{k}^2+\tau_k\hat{\sigma}^2 \bigg)+ \sum_{k=1}^{N}\tau_k^2\tilde{\sigma}_0^2,
    \end{align*}
    which completes the proof.
\end{proof}

\begin{lemma}\label{lemma_zo_sum_gkplus1_gk}
    Suppose Assumptions \ref{assumption0_2} and \ref{assumption0_1} hold. We have
    \begin{align}\label{zo_sum_gkplus1_gk}
    \begin{split}
        \sum_{k=1}^{N}&\E\|P_{x^{k+1}}^{x^{k}}g^{k+1} - g^k\|_{x^k}^2\leq 2\sum_{k=0}^{N}\tau_k^2\hat{\sigma}^2 + 2\sum_{k=0}^{N}\left(\tau_k^2+\tau_k^3\right)\sigma_{k}^2+2\sum_{k=0}^{N}\tau_k^2\Tilde{\sigma}_0^2\\& +2\sum_{k=0}^{N}(1+\tau_{k})\tau_{k}^2\frac{L^2\E\|g^{k}\|_{x^{k}}^2}{\beta^2}+ 2\sum_{k=0}^{N}\left(\tau_k^2+\tau_k^3\right)\frac{8(d+4)}{m_k}\E\|\grad f(x^k)\|_{x^k}^2
    \end{split} 
    \end{align}
    where the expectation $\E$ is taken with respect to all random variables up to iteration $k$, which includes the Gaussian variables $u$ in the zeroth-order estimator as in \eqref{zeroth_order_estimator}.
\end{lemma}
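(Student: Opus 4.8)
The plan is to exploit the isometry and invertibility of parallel transport together with the moving-average recursion to collapse the difference $P_{x^{k+1}}^{x^{k}}g^{k+1}-g^k$ into a single clean expression, and then to invoke Lemma~\ref{bound_zo_estimator} and the second displayed inequality of Lemma~\ref{lemma_zo_gk_gradk} to bound the resulting terms.

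First I would start from the defining recursion in Line~4 of Algorithm~\ref{algorithm2}, namely $g^{k+1}=(1-\tau_k)P_{x^{k}}^{x^{k+1}}g^{k}+\tau_k P_{x^{k}}^{x^{k+1}}G_{\mu}^k$, and apply $P_{x^{k+1}}^{x^{k}}$ to both sides. Since $P_{x^{k+1}}^{x^{k}}$ is the inverse of $P_{x^{k}}^{x^{k+1}}$ along the same minimal geodesic, the transports cancel, i.e. $P_{x^{k+1}}^{x^{k}}P_{x^{k}}^{x^{k+1}}=\mathrm{Id}$ on $\T_{x^k}\M$, and I obtain $P_{x^{k+1}}^{x^{k}}g^{k+1}=(1-\tau_k)g^{k}+\tau_k G_{\mu}^k$, hence $P_{x^{k+1}}^{x^{k}}g^{k+1}-g^k=\tau_k(G_{\mu}^k-g^k)$. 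This reduces the quantity of interest to $\|P_{x^{k+1}}^{x^{k}}g^{k+1}-g^k\|_{x^k}^2=\tau_k^2\|G_{\mu}^k-g^k\|_{x^k}^2$, which is the key structural simplification.

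Next I would split $G_{\mu}^k-g^k=(G_{\mu}^k-\grad f(x^k))+(\grad f(x^k)-g^k)$ and use $\|a+b\|^2\le 2\|a\|^2+2\|b\|^2$ to get $\tau_k^2\|G_{\mu}^k-g^k\|_{x^k}^2\le 2\tau_k^2\|G_{\mu}^k-\grad f(x^k)\|_{x^k}^2+2\tau_k^2\|g^k-\grad f(x^k)\|_{x^k}^2$. Summing over $k$ and taking expectations, the first family of terms is controlled by the third bound in Lemma~\ref{bound_zo_estimator}, which yields $\E\|G_{\mu}^k-\grad f(x^k)\|_{x^k}^2\le\tilde{\sigma}_k^2$; expanding $\tilde{\sigma}_k^2=\sigma_k^2+\frac{8(d+4)}{m_k}\E\|\grad f(x^k)\|_{x^k}^2$ produces part of the $2\sum(\tau_k^2+\tau_k^3)\sigma_k^2$ term and part of the gradient-norm contribution. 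The second family, $2\sum_{k=1}^N\tau_k^2\E\|g^k-\grad f(x^k)\|_{x^k}^2$, is precisely the quantity bounded by the second displayed inequality of Lemma~\ref{lemma_zo_gk_gradk}; substituting it and again expanding the $\tau_k^3\tilde{\sigma}_k^2$ term supplies the remaining $\hat{\sigma}^2$, $\sigma_k^2$, $\tilde{\sigma}_0^2$, $\frac{L^2\E\|g^k\|_{x^k}^2}{\beta^2}$, and gradient-norm terms.

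Finally I would collect all contributions and match them against the right-hand side, extending the summation ranges from $\{1,\dots,N\}$ or $\{0,\dots,N-1\}$ to $\{0,\dots,N\}$, which is legitimate since every summand is nonnegative. The only genuinely Riemannian ingredient is the cancellation $P_{x^{k+1}}^{x^{k}}P_{x^{k}}^{x^{k+1}}=\mathrm{Id}$; once the difference has collapsed to $\tau_k(G_{\mu}^k-g^k)$, the rest is careful bookkeeping of the two earlier lemmas. I expect the main obstacle to be exactly this bookkeeping, namely ensuring that the expansion of $\tilde{\sigma}_k^2$ and the assembly of terms inherited from Lemma~\ref{lemma_zo_gk_gradk} line up cleanly with the grouped $(\tau_k^2+\tau_k^3)$ coefficients appearing in the claimed bound.
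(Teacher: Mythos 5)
Your proposal is correct and follows essentially the same route as the paper's own proof: collapse the transported difference to $\tau_k^2\|G_{\mu}^k-g^k\|_{x^k}^2$ via the inverse/isometry property of parallel transport, split around $\grad f(x^k)$ with the inequality $\|a+b\|^2\le 2\|a\|^2+2\|b\|^2$, bound the first piece by Lemma~\ref{bound_zo_estimator} and the second by the $\sum_k\tau_k^2$-weighted inequality of Lemma~\ref{lemma_zo_gk_gradk}, then enlarge the summation ranges using nonnegativity. The bookkeeping you flag as the main obstacle is indeed all that remains, and it assembles exactly as in the paper.
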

\begin{proof}
    First note that $\|P_{x^{k+1}}^{x^{k}}g^{k+1} - g^k\|_{x^k}^2 = \tau_k^2\|G_{\mu}^k - g^k\|_{x^k}^2  = \tau_k^2\|G_{\mu}^k - \grad f(x^k) + \grad f(x^k) - g^k\|_{x^k}^2 
        \leq  2\tau_k^2\|G_{\mu}^k - \grad f(x^k)\|_{x^k}^2 + 2\tau_k^2\|\grad f(x^k) - g^k\|_{x^k}^2$.  Taking the expectation conditioned on $\mathcal{F}_{k-1}$, we get
    \begin{align*}
        &\frac{1}{2}\E[\|P_{x^{k+1}}^{x^{k}}g^{k+1} - g^k\|_{x^k}^2|\mathcal{F}_{k-1}]\\
        \leq & \tau_k^2\E[\|G_{\mu}^k - \grad f(x^k)\|_{x^k}^2|\mathcal{F}_{k-1}] + \tau_k^2\E[\|\grad f(x^k) - g^k\|_{x^k}^2|\mathcal{F}_{k-1}]\\
        \leq & \tau_k^2\bigg(\sigma_{k}^2+\frac{8(d+4)}{m_k}\E[\|\grad f(x^k)\|_{x^k}^2|\mathcal{F}_{k-1}]\bigg)+ \tau_k^2\E[\|\grad f(x^k) - g^k\|_{x^k}^2|\mathcal{F}_{k-1}],
    \end{align*}
where last inequality is by Lemma \ref{bound_zo_estimator}. Now using law of total expectation to take the expectation for all random variables and summing up over $k=0,...,N-1$, we have
    \begin{align*}
        & \frac{1}{2}\sum_{k=1}^{N}\E\|P_{x^{k+1}}^{x^{k}}g^{k+1} - g^k\|_{x^k}^2\\
        \leq & \sum_{k=1}^{N}\tau_k^2\sigma_{k}^2+ \sum_{k=1}^{N}\tau_k^2\frac{8(d+4)}{m_k}\E\|\grad f(x^k)\|_{x^k}^2 + \sum_{k=1}^{N}\tau_k^2\E\|\grad f(x^k) - g^k\|_{x^k}^2 \\
        \leq & \sum_{k=0}^{N}\tau_k^2\hat{\sigma}^2 + \sum_{k=0}^{N}\left(\tau_k^2+\tau_k^3\right)\sigma_{k}^2+\sum_{k=0}^{N}\tau_k^2\Tilde{\sigma}_0^2\\& +\sum_{k=0}^{N}(1+\tau_{k})\tau_{k}^2\frac{L^2\E\|g^{k}\|_{x^{k}}^2}{\beta^2}+ \sum_{k=0}^{N}\left(\tau_k^2+\tau_k^3\right)\frac{8(d+4)}{m_k}\E\|\grad f(x^k)\|_{x^k}^2,
    \end{align*}
    where the second inequality is by Lemma \ref{lemma_zo_gk_gradk}. 
\end{proof}

Now we are ready to present our main result. 
\begin{theorem}\label{theorem3}
    Suppose Assumptions \ref{assumption0_2} and \ref{assumption0_1} hold. In  Algorithm~\ref{algorithm2}, we set $
     \mu=\mathcal{O}\bigg(\frac{1}{L d^{3/2}N^{1/4}}\bigg)$, and $\beta\geq 4 L$. 
     Then the following holds.
    \begin{itemize}
        \item[(i)] If we choose $\tau_0=1$, $\tau_k= {1}/{\sqrt{N}}$, $k\geq 1$ and $m_k\equiv 8(d+4)$, $k\geq 0$, then we have $\frac{1}{N+1}\sum_{k=0}^{N} \E\|\grad f(x^{k})\|_{x^{k}}^2 \leq \mathcal{O}({1}/{\sqrt{N}})$.
        \item[(ii)] If we choose $\tau_0=1$, $\tau_k= {1}/{\sqrt{dN}}$, $k\geq 1$, $m_0=d$ and $m_k=1$ for $k\geq 1$, then we have $\frac{1}{N+1}\sum_{k=0}^{N} \E\|\grad f(x^{k})\|_{x^{k}}^2 \leq \mathcal{O}(\sqrt{{d}/{N}})$, for all $N = \Omega(d)$.
    \end{itemize}
Here the expectation $\E$ is taken with respect to all random variables up to iteration $k$, which includes the random variables $u$ in zeroth-order estimator \eqref{zeroth_order_estimator}. 

\end{theorem}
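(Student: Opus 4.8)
The plan is to run a Lyapunov/potential-descent argument with the potential $W(x,g)=(f(x)-f^*)+\frac{1}{2\beta}\|g\|_x^2$ from \eqref{eta_def}, adapting the single-timescale scheme of \cite{ghadimi2020single} to the manifold setting. I would first bound the one-step change $\E[W(x^{k+1},g^{k+1})-W(x^k,g^k)\mid\mathcal{F}_{k-1}]$ by splitting it into a function-value part and a gradient-norm ($\eta$) part. For the function-value part, since $x^{k+1}=\Exp_{x^k}(-t_k g^k)$ gives $\Exp_{x^k}^{-1}(x^{k+1})=-t_k g^k$ and $\|\Exp_{x^k}^{-1}(x^{k+1})\|_{x^k}=t_k\|g^k\|_{x^k}$, the descent inequality in Assumption~\ref{assumption0_2} yields $f(x^{k+1})-f(x^k)\le -t_k\langle\grad f(x^k),g^k\rangle_{x^k}+\tfrac{Lt_k^2}{2}\|g^k\|_{x^k}^2$; applying the polarization identity to $\langle\grad f(x^k),g^k\rangle$ then produces the three terms $-\tfrac{t_k}{2}\|\grad f(x^k)\|^2$, $-\tfrac{t_k}{2}\|g^k\|^2$ and $+\tfrac{t_k}{2}\|g^k-\grad f(x^k)\|^2$.

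For the $\eta$ part I would exploit the isometry of parallel transport. Writing $g^{k+1}=P_{x^k}^{x^{k+1}}[(1-\tau_k)g^k+\tau_k G_\mu^k]$ and $\|g^k\|_{x^k}=\|P_{x^k}^{x^{k+1}}g^k\|_{x^{k+1}}$, one expands $\|g^{k+1}\|_{x^{k+1}}^2-\|g^k\|_{x^k}^2$ into a cross term $2\tau_k\langle G_\mu^k-g^k,g^k\rangle_{x^k}$ and a difference term $\tau_k^2\|G_\mu^k-g^k\|_{x^k}^2=\|P_{x^{k+1}}^{x^k}g^{k+1}-g^k\|_{x^k}^2$, the latter being exactly the quantity controlled by Lemma~\ref{lemma_zo_sum_gkplus1_gk}. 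The cross term is the main Riemannian subtlety: unlike the first-order Euclidean case, the conditional mean $\E[G_\mu^k\mid\mathcal{F}_{k-1}]$ is a \emph{biased} estimate of $\grad f(x^k)$, so after taking conditional expectation and using polarization, $2\tau_k\E\langle G_\mu^k-g^k,g^k\rangle$ splits into $\tau_k(\|\grad f(x^k)\|^2-\|g^k\|^2-\|g^k-\grad f(x^k)\|^2)$ plus a bias term $2\tau_k\langle\E G_\mu^k-\grad f(x^k),g^k\rangle$, which I would control by Cauchy--Schwarz and the bias bound $\hat{\sigma}^2$ of Lemma~\ref{bound_zo_estimator} through Young's inequality.

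The key accounting observation is that, once the function and $\eta$ contributions are combined with $t_k=\tau_k/\beta$, the $\|\grad f(x^k)\|^2$ and $\|g^k-\grad f(x^k)\|^2$ terms \emph{cancel exactly}, leaving a net descent of order $-\tfrac{3t_k}{8}\|g^k\|^2$ (using $\beta\ge 4L$ to dominate the $\tfrac{Lt_k^2}{2}$ and Young contributions). Telescoping over $k=0,\dots,N$ and using $W\ge 0$ then gives $\sum_k t_k\E\|g^k\|^2\lesssim W(x^0,g^0)+\sum_k t_k\hat{\sigma}^2+\tfrac{1}{2\beta}\sum_k\E\|P_{x^{k+1}}^{x^k}g^{k+1}-g^k\|^2$, into which I substitute Lemma~\ref{lemma_zo_sum_gkplus1_gk}. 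To pass from $\|g^k\|^2$ to the desired $\|\grad f(x^k)\|^2$ I would invoke the convexity inequality $\|\grad f(x^k)\|^2\le \|g^k\|^2+2\|g^k-\grad f(x^k)\|^2$ noted after \eqref{eta_def}, and bound $\sum_k t_k\E\|g^k-\grad f(x^k)\|^2$ via Lemma~\ref{lemma_zo_gk_gradk}.

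The hard part is the circular bookkeeping: the bounds from Lemmas~\ref{lemma_zo_gk_gradk} and~\ref{lemma_zo_sum_gkplus1_gk} reintroduce both $\tfrac{L^2}{\beta^2}\|g^k\|^2$ terms and dimension-scaled $\tfrac{8(d+4)}{m_k}\|\grad f(x^k)\|^2$ terms on the right-hand side, so the three sums $\sum_k t_k\|g^k\|^2$, $\sum_k t_k\|\grad f\|^2$ and $\sum_k\tau_k\|g^k-\grad f\|^2$ feed into one another. The $\|g^k\|^2$ terms are absorbed by the $-\tfrac{3t_k}{8}\|g^k\|^2$ descent precisely because $\beta\ge 4L$ makes their coefficient $\O(\tau_k L^2/\beta^2)$ a small fraction of $t_k$; the $\tfrac{d}{m_k}\|\grad f\|^2$ terms can be moved to the left only when their coefficient $\tau_k^2 d/m_k$ is a small fraction of $t_k=\tau_k/\beta$, i.e.\ when $\tau_k d/m_k\ll 1$. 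This single requirement dictates the two regimes: with $m_k=\Theta(d)$ and $\tau_k=1/\sqrt N$ (case (i)) one has $\tau_k d/m_k=\O(1/\sqrt N)$, while with $m_k=1$ and $\tau_k=1/\sqrt{dN}$ (case (ii)) one has $\tau_k d/m_k=\O(\sqrt{d/N})$, small exactly under $N=\Omega(d)$; the larger initial batch $m_0=\O(d)$ is what keeps $\tilde{\sigma}_0^2=\O(1)$. Finally, after the absorption leaves $\tfrac12\sum_k t_k\E\|\grad f(x^k)\|^2\le\O(1)$, I divide by $\sum_k t_k=\Theta(\sqrt N/\beta)$ (resp.\ $\Theta(\sqrt{N/d}/\beta)$), and the choice $\mu=\O(1/(Ld^{3/2}N^{1/4}))$ makes $\hat{\sigma}^2$ and the $\mu^2L^2(d+6)^3$ smoothing-bias terms $\O(1/\sqrt N)$, so that every constant-order term stays $\O(1)$, yielding the rates $\O(1/\sqrt N)$ and $\O(\sqrt{d/N})$ respectively.
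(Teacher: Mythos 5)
Your proposal is correct and follows essentially the same route as the paper's proof: the same lifted potential $W$, the one-step descent via Assumption~\ref{assumption0_2} plus the isometry of parallel transport, Young's inequality with the bias bound $\hat{\sigma}^2$ for the biased estimator, substitution of Lemmas~\ref{lemma_zo_gk_gradk} and~\ref{lemma_zo_sum_gkplus1_gk}, and absorption of the $\tfrac{L^2}{\beta^2}\|g^k\|^2$ and $\tfrac{d}{m_k}\|\grad f(x^k)\|^2$ terms under exactly the conditions \eqref{zo_temp_ineq1}--\eqref{zo_temp_ineq2}. The only cosmetic difference is that you apply the polarization identity separately to the function-value and cross terms and observe the exact cancellation, whereas the paper combines the two inner products into $t_k\langle g^k, G_\mu^k-\grad f(x^k)\rangle_{x^k}$ before bounding; the resulting one-step inequality is identical.
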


Before we proceed to the proof of Theorem \ref{theorem3}, we have the following Lemma \ref{lemma_zo_inequalities} which will be utilized in the proof.
\begin{lemma}\label{lemma_zo_inequalities}
    Suppose we take parameters the same as Theorem \ref{theorem3}, then we have
    \begin{subequations}
    \begin{gather}
        \frac{\tau_k}{2\beta} - \frac{\tau_k^2 L}{2 \beta^2} - \frac{(1+\tau_{k})\tau_{k}^2}{\beta}\frac{L^2}{\beta^2}\geq \frac{\tau_k}{4\beta}, \label{zo_temp_ineq1}\\
        \frac{\tau_k}{2} - \bigg( 4\bigg(\frac{2L^2}{\beta^2}+1\bigg)(1+\tau_k)+1 \bigg)\frac{8(d+4)}{m_k}\tau_k^2\geq \frac{\tau_k}{4}. \label{zo_temp_ineq2}
    \end{gather}
    \end{subequations}
    
\end{lemma}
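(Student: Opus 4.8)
The plan is to treat both inequalities as elementary algebraic estimates, reducing each to a bound on a dimensionless quantity after dividing through by an appropriate positive factor. The two facts that drive everything are $\beta\geq 4L$, which gives $L/\beta\leq 1/4$, and $\tau_k\leq 1$, which holds for every $k$ under both parameter choices of Theorem~\ref{theorem3} (since $\tau_0=1$ and $\tau_k\in\{1/\sqrt{N},\,1/\sqrt{dN}\}\leq 1$ for $k\geq 1$). Everything else is a matter of tracking constants.

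For \eqref{zo_temp_ineq1}, I would divide both sides by the positive quantity $\tau_k/\beta$, which turns the claim into the equivalent statement
\[
\frac{\tau_k L}{2\beta} + (1+\tau_k)\tau_k\frac{L^2}{\beta^2} \leq \frac14 .
\]
Each summand is then controlled separately: using $L/\beta\leq 1/4$ and $\tau_k\leq 1$, the first term is at most $\tfrac12\cdot\tfrac14=\tfrac18$, and the second is at most $2\cdot 1\cdot(\tfrac14)^2=\tfrac18$, so their sum is at most $\tfrac14$. This establishes \eqref{zo_temp_ineq1} for all $k\geq 0$, with equality attained only in the extreme case $\tau_k=1$, $\beta=4L$.

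For \eqref{zo_temp_ineq2}, I would again divide by $\tau_k>0$ and rearrange to the equivalent inequality
\[
\Big(4\big(\tfrac{2L^2}{\beta^2}+1\big)(1+\tau_k)+1\Big)\frac{8(d+4)}{m_k}\,\tau_k \leq \frac14 .
\]
The bracketed coefficient is bounded by an absolute constant: $2L^2/\beta^2\leq 1/8$ gives $2L^2/\beta^2+1\leq 9/8$, and with $1+\tau_k\leq 2$ the whole bracket is at most $4\cdot\tfrac98\cdot 2+1=10$. It therefore suffices to verify $\tfrac{8(d+4)}{m_k}\tau_k\leq\tfrac{1}{40}$. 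In case (i), where $m_k=8(d+4)$, this quantity equals $\tau_k=1/\sqrt{N}$, so the bound holds once $N$ exceeds a fixed constant; in case (ii), where $m_k=1$, it equals $8(d+4)/\sqrt{dN}$, and requiring this to be at most $1/40$ amounts exactly to $N=\Omega(d)$.

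The analysis presents no genuine obstacle, but two bookkeeping points deserve care. First, \eqref{zo_temp_ineq2} is used only for $k\geq 1$: at $k=0$ we have $\tau_0=1$, and then the left-hand side of the reduced inequality is of constant order and far exceeds $1/40$, so the $k=0$ contribution must be handled separately in the proof of Theorem~\ref{theorem3} via the initial batch $m_0=\mathcal{O}(d)$. Second, the threshold on $N$ produced by \eqref{zo_temp_ineq2} in the single-batch regime is precisely the source of the hypothesis $N=\Omega(d)$ in Theorem~\ref{theorem3}(ii); keeping the constants explicit (here $1/40$) is what makes this dependence transparent.
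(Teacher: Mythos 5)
Your proof is correct and follows essentially the same route as the paper's: divide each inequality by $\tau_k/\beta$ (resp.\ $\tau_k$), use $L/\beta\leq 1/4$ and $\tau_k\leq 1$, and reduce to a scalar condition on $\tau_k$ that holds once $N$ exceeds a threshold of order $1$ in case (i) and of order $(d+4)^2/d=\Omega(d)$ in case (ii); the only difference is that you bound $1+\tau_k\leq 2$ up front to get the clean criterion $\tfrac{8(d+4)}{m_k}\tau_k\leq\tfrac{1}{40}$, whereas the paper keeps the quadratic $18\tau_k^2+22\tau_k-1\leq 0$ and solves it exactly, yielding slightly sharper numerical thresholds ($N\geq 520$ versus your $N\geq 1600$ in case (i)). Your side remark that \eqref{zo_temp_ineq2} fails at $k=0$ (where $\tau_0=1$) is accurate and is a point the paper's own proof passes over silently.
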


\begin{proof}
    To show \eqref{zo_temp_ineq1}, using $\beta\geq 4L$, we just need to show that 
    $\tau_k/8+(1+\tau_k)\tau_k/16\leq 1/4$, which holds naturally in both cases (i) and (ii). 
    
    As for \eqref{zo_temp_ineq2}, again by $\beta\geq 4L$ we just need to show that $ \big( 4({1}/{8}+1)(1+\tau_k)+1 \big)({8(d+4)}/{m_k})\tau_k\leq {1}/{4}$. In case (i), this is equivalent to 
    $18\tau_k^2+22\tau_k-1\leq 0$, which is guaranteed when $N \geq 520$. For case (ii), similar calculation shows that we need $\tau_k\leq ({\sqrt{22^2+9/(d+4)} - 22})/{36}$, which is guaranteed when $N\geq 3.2\cdot 10^4\cdot {(d+4)^2}/{d}$.
\end{proof}

\begin{proof}[Proof of Theorem \ref{theorem3}]
By the isometry property of parallel transport, 
\begin{align*}
& \eta(x^{k},g^{k}) - \eta(x^{k+1},g^{k+1}) = \frac{1}{2\beta}\|g^{k+1}\|_{x^{k+1}}^2 - \frac{1}{2\beta}\|g^{k}\|_{x^{k}}^2  \\ = & \frac{1}{2\beta}\|P_{x^{k+1}}^{x^{k}}g^{k+1}\|_{x^{k}}^2 - \frac{1}{2\beta}\|g^{k}\|_{x^{k}}^2 \\ = &-\langle -\frac{1}{\beta}g^{k}, P_{x^{k+1}}^{x^{k}}g^{k+1} - g^k \rangle_{x^k} + \frac{1}{2\beta} \|P_{x^{k+1}}^{x^{k}}g^{k+1} - g^k\|_{x^k}^2.
\end{align*}

By combining this and Assumption \ref{assumption0_2}, we have the following bound for the difference of the merit function (defined in \eqref{eta_def}), evaluated at successive iterates:
\begin{equation*}
\begin{split}
    &W(x^{k+1}, g^{k+1}) - W(x^{k}, g^{k}) \\
    \leq & - t_k \langle \grad f(x^k),g^k \rangle_{x^{k}} + \frac{t_k^2 L}{2} \|g^k\|_{x^k}^2 + \frac{1}{\beta}\langle g^k, P_{x^{k+1}}^{x^{k}}g^{k+1} - g^k \rangle_{x^{k}} + \frac{1}{2\beta}\|P_{x^{k+1}}^{x^{k}}g^{k+1} - g^k\|_{x^k}^2 \\
    = & \bigg(\frac{ t_k^2 L}{2} -  t_k\bigg) \|g^k\|_{x^k}^2 + t_k\langle g^k, G_{\mu}^k - \grad f(x^k) \rangle_{x^{k}} + \frac{1}{2\beta}\|P_{x^{k+1}}^{x^{k}}g^{k+1} - g^k\|_{x^k}^2.
\end{split}
\end{equation*}
Moreover, we have 
\begin{align*}
    &\E_{u^k}[\langle g^k, G_{\mu}(x^k) - \grad f(x^k) \rangle_{x^k}] = \langle g^k, \E_{u^k} G_{\mu}(x^k) - \grad f(x^k) \rangle_{x^k} \\ \leq & \frac{1}{2}\|g^k\|_{x^k}^2 + \frac{1}{2}\|\E_{u^k} G_{\mu}(x^k) - \grad f(x^k)\|_{x^k}^2 \leq \frac{1}{2}\|g^k\|_{x^k}^2 + \frac{1}{2}\hat{\sigma}^2,
\end{align*}
where the expectation is only taken with respect to the Gaussian random variables that we used to construct $G_{\mu}(x^{k})$. Therefore, by using the law of total expectation, we have $ \E W(x^{k+1}, g^{k+1}) - \E W(x^{k}, g^{k}) \leq  \frac{1}{\beta}\left(\frac{\tau_k^2 L}{2\beta} - \frac{\tau_k}{2}\right) \E\|g^k\|_{x^k}^2 + \frac{\tau_k}{2\beta}\hat{\sigma}^2 + \frac{1}{2\beta}\E\|P_{x^{k+1}}^{x^{k}}g^{k+1} - g^k\|_{x^k}^2$, and we thus have (by summing up the above inequality over $k=0,...,N$):
    \begin{align}\label{zo_W_decrease}
    \begin{aligned}
        &\sum_{k=0}^{N}\left(\E W(x^{k+1}, g^{k+1}) - \E W(x^{k}, g^{k})\right) \\
        \leq & \sum_{k=0}^{N}\frac{1}{2\beta}\left(\frac{\tau_k^2 L}{\beta} - \tau_k\right) \E \|g^k\|_{x^k}^2 + \sum_{k=0}^{N}\frac{\tau_k}{2\beta}\hat{\sigma}^2 + \frac{1}{2\beta}\sum_{k=1}^{N}\E \|P_{x^{k+1}}^{x^{k}}g^{k+1} - g^k\|_{x^k}^2,
    \end{aligned}
    \end{align}
    where the last term sums from $1$ since $g^{1} - P_{x^{0}}^{x^{1}}g^0 = \tau_0(G_{\mu}^0-g^0)=0$.
    
    Utilizing \eqref{zo_sum_gkplus1_gk} and \eqref{zo_W_decrease}, we have (note that $W\geq 0$) 
    \begin{equation*}
    \begin{split}
        &\sum_{k=0}^{N}\frac{1}{2\beta}\left(\tau_k - \frac{\tau_k^2 L}{\beta}\right) \E \|g^k\|_{x^k}^2\leq W(x^0, g^0) + \sum_{k=0}^{N}\frac{\tau_k}{2\beta}\hat{\sigma}^2 + \frac{1}{2\beta}\sum_{k=1}^{N}\E \|P_{x^{k+1}}^{x^{k}}g^{k+1} - g^k\|_{x^k}^2 \\
        \leq & W(x^0, g^0) + \frac{1}{2\beta}\sum_{k=0}^{N}(\tau_k+2\tau_k^2)\hat{\sigma}^2 + \frac{1}{\beta}\sum_{k=0}^{N}\left(\tau_k^2+\tau_k^3\right)\sigma_{k}^2+\frac{1}{\beta}\sum_{k=0}^{N}\tau_k^2\Tilde{\sigma}_0^2\\& +\frac{1}{\beta}\sum_{k=0}^{N}(1+\tau_{k})\tau_{k}^2\frac{L^2\E\|g^{k}\|_{x^{k}}^2}{\beta^2}+ \frac{1}{\beta}\sum_{k=0}^{N}\left(\tau_k^2+\tau_k^3\right)\frac{8(d+4)}{m_k}\E\|\grad f(x^k)\|_{x^k}^2.
    \end{split}
    \end{equation*}
    Combining this with \eqref{zo_temp_ineq1} we have 
    \begin{align}\label{zo_temp5}
    \begin{aligned}
        \sum_{k=0}^{N}\tau_k& \E \|g^k\|_{x^k}^2
        \leq 4\beta W(x^0, g^0) + 2\sum_{k=0}^{N}(\tau_k+2\tau_k^2)\hat{\sigma}^2 + 4\sum_{k=0}^{N}\left(\tau_k^2+\tau_k^3\right)\sigma_{k}^2\\& +4\sum_{k=0}^{N}\tau_k^2\Tilde{\sigma}_0^2+4\sum_{k=0}^{N}\left(\tau_k^2+\tau_k^3\right)\frac{8(d+4)}{m_k}\E\|\grad f(x^k)\|_{x^k}^2.
    \end{aligned}
    \end{align}
    By Lemma~\ref{lemma_zo_gk_gradk} and \eqref{zo_temp5}, we get (also by $\tau_k\leq 1$)
    \begin{align}\label{zo_temp_ineq1.5}
    \begin{aligned}
        &\frac{1}{2}\sum_{k=0}^{N}\tau_k \E\|\grad f(x^{k})\|_{x^{k}}^2 \leq \sum_{k=0}^{N}\tau_k\E\|g^{k} - \grad f(x^{k})\|_{x^{k}}^2 + \sum_{k=0}^{N}\tau_k\E\|g^k\|_{x^{k}}^2 \\
        \leq & \sum_{k=0}^{N-1}\tau_{k}^2\tilde{\sigma}_k^2+\sum_{k=0}^{N-1}\tau_k\hat{\sigma}^2 + \left(\frac{2L^2}{\beta^2}+1\right)\sum_{k=0}^{N}\tau_k\E\|g^k\|_{x^{k}}^2+2\tilde{\sigma}_0^2 \\
        \leq & \left(\frac{8L^2}{\beta}+4\beta\right) W(x^0,g^0) + \sum_{k=0}^{N}\bigg[\tau_k+2\left(\frac{2L^2}{\beta^2}+1\right)(\tau_k+2\tau_k^2)\bigg]\hat{\sigma}^2\\
        &+\sum_{k=0}^{N}\bigg[ \tau_k^2+4\left(\frac{2L^2}{\beta^2}+1\right)(\tau_k^2+\tau_k^3) \bigg]\sigma_{k}^2 +\bigg[ 4\left(\frac{2L^2}{\beta^2}+1\right)\sum_{k=0}^{N}\tau_k^2 + 2 \bigg]\Tilde{\sigma}_0^2 \\
        &+ \sum_{k=0}^{N}\bigg[ 4\left(\frac{2L^2}{\beta^2}+1\right)(\tau_k^2+\tau_k^3)+\tau_k^2 \bigg]\frac{8(d+4)}{m_k}\E\|\grad f(x^k)\|_{x^k}^2,
    \end{aligned}
    \end{align}
    where $\tau_0\E\|g^{0} - \grad f(x^{0})\|_{x^{0}}^2\leq \Tilde{\sigma}_0^2$ is used in the last term on the second line. By combining \eqref{zo_temp_ineq1.5} and \eqref{zo_temp_ineq2} we get
    \begin{align}\label{zo_temp7}
    \begin{aligned}
    & \sum_{k=0}^{N}\frac{\tau_k}{4} \E\|\grad f(x^{k})\|_{x^{k}}^2
      \\   \leq
        &\sum_{k=0}^{N}\bigg[\frac{\tau_k}{2} - \bigg( 4\left(\frac{2L^2}{\beta^2}+1\right)(\tau_k^2+\tau_k^3)+\tau_k^2 \bigg)\frac{8(d+4)}{m_k}\bigg]\E\|\grad f(x^{k})\|_{x^{k}}^2 \\
        \leq & \left(\frac{8L^2}{\beta}+4\beta\right) W(x^0,g^0) + \sum_{k=0}^{N}\bigg[\tau_k+2\left(\frac{2L^2}{\beta^2}+1\right)(\tau_k+2\tau_k^2)\bigg]\hat{\sigma}^2\\
        &+\sum_{k=0}^{N}\bigg[ \tau_k^2+4\left(\frac{2L^2}{\beta^2}+1\right)(\tau_k^2+\tau_k^3) \bigg]\sigma_{k}^2 +\bigg[ 4\left(\frac{2L^2}{\beta^2}+1\right)\sum_{k=0}^{N}\tau_k^2 + 2 \bigg]\Tilde{\sigma}_0^2.
    \end{aligned}
    \end{align}

    For case (i) in Theorem~\ref{theorem3}, \eqref{zo_temp7} can be rewritten as 
    \[
    \frac{1}{N+1}\sum_{k=0}^{N} \E\|\grad f(x^{k})\|_{x^{k}}^2 
    \leq \frac{c_1W(x^0, g^0)}{\sqrt{N}}+ c_2\hat{\sigma}^2 + \frac{c_3\frac{1}{N}\sum_{k=0}^{N}\sigma_{k}^2}{\sqrt{N}} + \frac{c_4}{\sqrt{N}}\Tilde{\sigma}_0^2,
    \]
    for some absolute positive constants $c_1$, $c_2$, $c_3$ and $c_4$. The proof for case (i) is completed by noting that (see \eqref{def-tilde-sigma}) $\hat{\sigma}^2=\mathcal{O}(1/\sqrt{N})$, $\frac{1}{N}\sum_{k=0}^{N}\sigma_{k}^2=\mathcal{O}(1)$ and $\Tilde{\sigma}_0^2=\mathcal{O}(1)$. 
    
    For case (ii) in Theorem \ref{theorem3}, \eqref{zo_temp7} can be rewritten as 
    \[
    \frac{1}{N+1}\sum_{k=0}^{N} \E\|\grad f(x^{k})\|_{x^{k}}^2 
    \leq c_1' W(x^0, g^0)\sqrt{\frac{d}{N}}+ c_2'\hat{\sigma}^2 + \frac{c_3'\frac{1}{N}\sum_{k=0}^{N}\sigma_{k}^2}{\sqrt{d N}} + c_4'\sqrt{\frac{d}{N}}\Tilde{\sigma}_0^2,
    \]
    for some positive constants $c_1'$, $c_2'$, $c_3'$ and $c_4'$. The proof of case (ii) is completed by noting that $\Tilde{\sigma}_0^2=\mathcal{O}(1)$, $\hat{\sigma}^2=\mathcal{O}(1/\sqrt{N})$ and $\frac{1}{N}\sum_{k=0}^{N}\sigma_{k}^2=\mathcal{O}(d)$.
\end{proof}

\begin{remark}\label{sec:samplingtrick}
If we sample $R\in\{0,1,2,...,N\}$ with $
\mathbb{P}(R=k)=\tau_k/(\textstyle \sum_{k=0}^{N}\tau_k)$, then the left hand side of the inequalities in Theorem \ref{theorem3}, i.e., $\frac{1}{N+1}\sum_{k=0}^{N} \E\|\grad f(x^{k})\|_{x^{k}}^2$, becomes $\E\|\grad f(x^{R})\|_{x^{R}}^2$. If we use this sampling in case (i) of Theorem \ref{theorem3}, then to get an $\epsilon$-approximate stationary solution as in Definition~\ref{def:epsstat}, we require an iteration complexity of $N=\mathcal{O}(1/\epsilon^4)$ and so an oracle complexity of $Nm=\mathcal{O}(d/\epsilon^4)$. Case (i) requires $m=\mathcal{O}(d)$ per-iteration, which might be inconvenient in practice. Case (ii) of Theorem \ref{theorem3} avoids this, as in case (ii) both the iteration complexity and the oracle complexity are $N=\mathcal{O}(d/\epsilon^4)$, with batch size $m=\mathcal{O}(1)$. This makes case (ii) more convenient to use in practice, from a streaming or online perspective. For the simulations in Section~\ref{sec:experiments}, we thus choose $m=\mathcal{O}(1)$ and apply the result from case (ii). We also remark that the above results provide concrete solutions to the question raised by~\cite{scheinberg2022finite}, namely, on the need for mini-batches (and its order per-iteration) in zeroth-order stochastic optimization\footnote{Although~\cite{scheinberg2022finite} focuses on the Euclidean case, the discussion there also holds in the Riemannian setting.}.
\end{remark}

\begin{remark}\label{rmk_bdd_grad_no_d}
    Notice that to prove \eqref{zo_temp_ineq2}, we need $N=\Omega(d)$ for case (ii) in Theorem \ref{theorem3}. We can remove this condition if in addition we have that $\grad f(x)$ is uniformly upper bounded: $\|\grad f(x)\|_x\leq G,~ \forall x\in\M$; see also Assumption \ref{assumption_compactness} which we utilize in the next section. Under this condition, \eqref{zo_temp_ineq1.5} directly gives:
    \begin{align*}
    \begin{aligned}
        &\frac{1}{2}\sum_{k=0}^{N}\tau_k \E\|\grad f(x^{k})\|_{x^{k}}^2\leq  \sum_{k=0}^{N}\bigg[\tau_k+2\left(\frac{2L^2}{\beta^2}+1\right)(\tau_k+2\tau_k^2)\bigg]\hat{\sigma}^2\\
        &+\sum_{k=0}^{N}\bigg[ \tau_k^2+4\left(\frac{2L^2}{\beta^2}+1\right)(\tau_k^2+\tau_k^3) \bigg]\sigma_k^2 +\bigg[ 4\left(\frac{2L^2}{\beta^2}+1\right)\sum_{k=0}^{N}\tau_k^2 + 2 \bigg]\Tilde{\sigma}_0^2 \\
        &+ \sum_{k=0}^{N}\bigg[ 4\left(\frac{2L^2}{\beta^2}+1\right)(\tau_k^2+\tau_k^3)+\tau_k^2 \bigg]\frac{8(d+4)}{m_k}G^2+\left(\frac{8L^2}{\beta}+4\beta\right) W(x^0,g^0),
    \end{aligned}
    \end{align*}
whose right hand side has the same order as \eqref{zo_temp7}. Therefore in this case we do not need $N=\Omega(d)$ for case (ii) to achieve the same rates of convergence as in Theorem \ref{theorem3}.
\end{remark}

\section{RASA with retractions and vector transports}\label{sec_avg_retr}
Algorithm~\ref{algorithm2} is based on exponential mapping and parallel transport, which has a high per-iteration complexity for various manifold choices $\mathcal{M}$. In this section, we focus on reducing the per-iteration complexity of the \texttt{Zo-RASA} algorithm. The approach is based on replacing the exponential mapping and parallel transport with retractions and vector transports, respectively, which leads to practically efficient implementations and improved per-iteration complexity. 

The convergence analysis of algorithms with retractions and vector transports are sharply different and much harder than the one we presented in Section \ref{sec_smooth_avg}. Recall that the analysis in Section~\ref{sec_smooth_avg} relied on the isometry property \eqref{eq_isometry_parallel_trans} of the parallel transports, which is no longer available for vector transports. We hence assume explicit global error bounds between the difference of retraction to exponential mapping, as well as vector transport to parallel transport in Assumption \ref{assumption1}. In Section~\ref{sec:examples} we provide conditions on the manifold under which such assumptions are naturally satisfied and provide explicit examples. Based on this, we establish that under a bounded fourth (instead of the second) central moment condition, the same sample complexity result as in the previous section could be obtained for the practical versions of \texttt{Zo-RASA} algorithm on compact manifolds. 

\subsection{Approximation error of retractions and vector transports}\label{sec:errorbounds}

We start with the following condition on the vector transport used; recall the notation from Definition~\ref{def_vec_para_trans}. 
\begin{assumption}\label{assumption1}
    If $x^{+}=\retr_{x}(g)$, $g\in \T_{x}\M$, then with $\mathsf{d}$ denoting the geodesic distance, the vector transport $\mathcal{T}_{g}$ satisfies the following inequalities: 
    \begin{align}\label{eq_assump_vec_trans1}
        \|\mathcal{T}_{g}(v)\|_{x^{+}}\leq \|v\|_{x},\ \mathsf{d}(x, x^+)\leq \|g\|_x,\ \|\mathcal{T}_{g}(v) - P_{x}^{x^{+}}(v)\|_{x^{+}}\leq C \|v\|_{x}\mathsf{d}(x, x^+)
    \end{align}
    for any vector $v\in \T_{x}\M$. 
\end{assumption}

An intuitive explanation of the first inequality in \eqref{eq_assump_vec_trans1} is that our retraction and vector transport are ``conservative'' so that their length/magnitude is not longer than the exact operation of exponential mapping and parallel transport. As for the last inequality in \eqref{eq_assump_vec_trans1}, we are essentially positing that the vector transport would not ``twist'' the vector too much so that its difference from the parallel-transported vector is not large. In general, conditions in \eqref{eq_assump_vec_trans1} require the vector transport not to be very far from the parallel-transported vectors on the new tangent space. 

\subsubsection{Comparison to prior works}\label{sec:comparisonprior}

We now provide a detailed comparison to similar type of conditions proposed in two prior works,~\cite{huang2015broyden} and \cite{sato2022riemannian}, and highlight the differences and advantages of our proposal. According to the definition of vector transport in Definition \ref{def_vec_para_trans}, we need to specify a retraction associated with the transport so that $\mathcal{T}_{\eta_x}(\xi_x)\in \T_{R_x(\eta_x)}\M$. In this section, we consider the projection retraction, denoted simply as $R$.

Given two transports, $\mathcal{T}_{S}$ and $\mathcal{T}_{R}$, \cite{huang2015broyden} propose certain conditions on approximating one with the other. First they require that $\mathcal{T}_{S}$ is isometric, i.e., $
\langle\mathcal{T}_{S_{\eta}} (\xi), \mathcal{T}_{S_{\eta}} (\zeta)\rangle_{R_{x}(\eta)} = \langle\xi, \zeta\rangle_{x}
$, hence we can basically regard $\mathcal{T}_{S}$ as parallel transport for comparison. Let $\mathcal{T}_{R}$ denote the differential of the retraction, given by $
    \mathcal{T}_{R_\eta}(\xi)= D R_x(\eta)[\xi] = \frac{d}{d t}R_x(\eta + t\xi) \in \T_{R_x(\eta)}\M$. Now the conditions stated in Equations (2.5) and (2.6) in \cite{huang2015broyden} are as follows: there exists a \textit{neighborhood} $\mathcal{U}$ of $x$, such that $\forall y\in\mathcal{U}$ we have $\|\mathcal{T}_{S_\eta} - \mathcal{T}_{R_\eta}\|_{\mathrm{op}}\leq c_0\|\eta\|_x$ and $\|\mathcal{T}_{S_\eta}^{-1} - \mathcal{T}_{R_\eta}^{-1}\|_{\mathrm{op}}\leq c_0\|\eta\|_x$, where $\eta = R_{x}^{-1}(y)$ and $\|\cdot\|_{\mathrm{op}}$ is the operator norm. These assumptions are essentially local results, and as a result, \cite{huang2015broyden} needs to impose an additional stringent condition (see, their Assumption 3.2) that all the updates in their algorithms are already sufficiently close to the (local) optimal value to prove their convergence results. With the above conditions (in particular for a $\mathcal{T}_{1_\eta}$ satisfying their conditions in (2.5) and (2.6)), \cite{huang2015broyden} shows in Lemma 3.5 that \textit{locally} we have $\|\mathcal{T}_{1_\eta}(\xi) - \mathcal{T}_{2_\eta}(\xi)\|_{y}\leq c_0\|\eta\|_x\|\xi\|_x$. The proof of their Lemma 3.5 relies on the smoothness of the local coordinate form of the vector transports, which could hold only when we have a coordinate chart covering the local neighborhood we consider. Hence, the assumptions in~\cite{huang2015broyden} are in a different flavor from ours. In particular, our assumptions are global, and we show in Theorem \ref{thm_assump_vec_trans} that they are satisfied by a certain (global) assumption on the second fundamental form of the manifold $\M$.

    The existing work \cite{huang2015broyden} also assumes the so-called locking condition $\mathcal{T}_{S_\eta}(\xi) =\beta \mathcal{T}_{R_\eta}(\xi)$, where $\beta=\|\xi\|_x/\|\mathcal{T}_{R_\xi}(\xi)\|_{R_{\xi}(x)}$, which means that the approximating transport keeps the same direction as the parallel transport $\mathcal{T}_{S}$. In our analysis, we avoid such a condition since we are trying to transport two vectors $g^k$ and $G_{\mu}^k$ (see Algorithm \ref{algorithm2_vec_tran}), and not just one previous gradient as in the Riemannian quasi-Newton method \citep{huang2015broyden}. 
    Another existing work \cite{sato2022riemannian}  requires algorithm-specific conditions in their Assumption 3.1. To elaborate, we recall that the \textit{deterministic} Riemannian conjugate gradient iterates (Algorithm 1 in \cite{sato2022riemannian}) are given by $
    x_{k+1}\leftarrow R_{x_k}(t_k\eta_k)$ and $\eta_{k+1}\leftarrow-\grad f(x_{k+1})+\beta_{k+1} s_k \mathscr{T}^{k}(\eta_k),$ where $t_k$, $\beta_k$ and $s_k$ are parameters and $\mathscr{T}^{k}$ is a transport map from $\T_{x_k}\M$ to $\T_{x_{k+1}}\M$. Given this,  their Assumption 3.1 requires that there exist $C\geq 0$ and index sets $K_1\subset\mathbb{N}$ and $K_2 = \mathbb{N}-K_1$ such that $ \left\|\mathscr{T}^{(k)}\left(\eta_k\right)-\mathrm{D} R_{x_k}\left(t_k \eta_k\right)\left[\eta_k\right]\right\|_{x_{k+1}} \leq C t_k\left\|\eta_k\right\|_{x_k}^2, k \in K_1$
and $\left\|\mathscr{T}^{(k)}\left(\eta_k\right)-\mathrm{D} R_{x_k}\left(t_k \eta_k\right)\left[\eta_k\right]\right\|_{x_{k+1}} \leq C\left(t_k+t_k^2\right)\left\|\eta_k\right\|_{x_k}^2,  k \in K_2$.

Our assumption differs from the above in three aspects: (i) we do not make algorithm-specific assumptions, where each inequality depends on the iterate number $k$; (ii) we are not only comparing transporting $\eta_k$ (which is the direction along which we update $x^k$), but also the zeroth-order estimator $G_{\mu}^k$ (see Algorithm \ref{algorithm2_vec_tran}), i.e., we assume a more general inequality by replacing $\mathrm{D} R_{x}(t_k \eta)[\eta]$ with $\mathrm{D} R_{x}(t_k \eta)[\xi]$, where $\xi$ can be different from $\eta$; (iii) we \emph{derive} the last inequality in \eqref{eq_assump_vec_trans1} using global assumption of second fundamental form of the manifold $\M$ in Theorem \ref{thm_assump_vec_trans}, instead of \emph{assuming} it.

\subsubsection{Illustrative Examples}\label{sec:examples}

We now further inspect Assumption \ref{assumption1} by checking the conditions under which \eqref{eq_assump_vec_trans1} holds in general, and also verifying it for various matrix-manifolds arising in applications.

We start with the first inequality in \eqref{eq_assump_vec_trans1}. It holds naturally if the manifold is a submanifold and the vector transport is the orthogonal projection, due to the non-expansiveness of orthogonal projections. The second inequality in \eqref{eq_assump_vec_trans1} is much trickier. For the scope of this work, we show that the second equation in \eqref{eq_assump_vec_trans1} holds for projectional retractions and projectional vector transports on Stiefel manifold, which also includes spheres and orthogonal groups as special cases. If the inverse of the retraction in Assumption \ref{assumption1} is well-defined, the second inequality in \eqref{eq_assump_vec_trans1} could equivalently be stated as
$\|\Exp_{x}^{-1}(x^+)\|_{x}\leq \|\retr_{x}^{-1}(x^+)\|_x$, which may hold for a larger class of manifolds and retractions. We leave a detailed study of this as future work.  

\textbf{Stiefel manifold.} Consider the Stiefel manifold $\St(d, p)$ defined in~\eqref{stiefel}, with the tangent space $\T_{X}\St(d, p)=\{\xi|X^\top \xi+\xi^\top X=0\}$ and Euclidean inner product $\langle X, Y\rangle:=\tr(X^\top Y)$. We consider the projectional retraction~\citep{absil2012projection} given by $X^+=R_X(\xi):=U V^\top$, where $X+\xi = U\Sigma V^\top$ is the (thin) singular value decomposition of $X+\xi$. Also, the projectional vector transport $\mathcal{T}$ is simply projecting a tangent vector $\xi\in \T_{X_0}\St(d, p)$ to $\T_{X}\St(d, p)$. It is clear that $\|\mathcal{T}(\xi)\|\leq \|\xi\|$ due to the non-expansiveness of orthogonal projections (note that $\T_{X}\St(d, p)$ is simply a linear subspace). To show $\mathsf{d}(X, X^+)\leq \|\xi\|$, denote $\gamma(t)$ the minimal geodesic connecting $X$ and $X^+$ with $\gamma(0)=X$ and $\gamma(1)=X^+$, so that $\mathsf{d}(X, X^+) = \int_{0}^{1}\|\gamma'(t)\| d t$. Notice that we can define another curve $c(t)=U(t)V^\top(t)$, where $X+t \xi=U(t)\Sigma(t)V^\top(t)$ is the singular value decomposition. The curve $c(t)=\retr_{X}(t\xi)$ is the parameterized curve of projectional retraction. Now using the distance with respect to the minimal geodesic, we have $\mathsf{d}(X, X^+) = \int_{0}^{1}\|\gamma'(t)\| d t \leq \int_{0}^{1}\|c'(t)\| d t \leq \int_{0}^{1}\|\xi\| d t = \|\xi\|$, where $\|c'(t)\|\leq \|\xi\|$ is due to the non-expansiveness of orthogonal projections, namely, $  \|c(t_1)-c(t_2)\|\leq \|X+t_1\xi - (X+t_2\xi)\|$. Indeed, although $\St(d, p)$ is not a convex set, the non-expansiveness condition still holds~\citep{gallivan2010note}, because $(X+\xi)^\top(X+\xi)=I_p+\xi^\top\xi\succeq I_p$, and the projection of $X+\xi$ onto the Stiefel manifold is the same as projection onto its convex hull $\{X\in\RR^{d\times p}|\|X\|_2\leq 1\}$. Now we turn to the last inequality in \eqref{eq_assump_vec_trans1}. Given a complete embedded submanifold, we can show that the last inequality in \eqref{eq_assump_vec_trans1} holds under the boundedness of the second fundamental form in Theorem \ref{thm_assump_vec_trans}, given that the vector transport is the orthogonal projection to the new tangent space.

\begin{theorem}\label{thm_assump_vec_trans}
    Suppose $\M$ is an embedded complete Riemannian submanifold of Euclidean space. Suppose for all unit vector $\xi,\eta\in \T\M$, $\|\xi\|=\|\eta\|=1$, the norm of the second fundamental form $B(\xi,\eta)$ is bounded by constant $C$. Consider the parallel transport $P_{x}^{y}$ along the minimal geodesic from $x\in\M$ to $y\in\M$, we have $\|\proj_{\T_{y}\M}(v) - P_{x}^{y}(v)\|\leq C \|v\|\mathsf{d}(x,y)$, for any $v\in \T_{x}\M$. That is, the last inequality in \eqref{eq_assump_vec_trans1} holds with constant $C$.
\end{theorem}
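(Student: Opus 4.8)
The plan is to exploit the characterization of the second fundamental form as the \emph{normal} component of the ambient Euclidean derivative, applied to the parallel-transported vector field along the minimal geodesic. Since $\M$ is complete, let $\gamma:[0,\ell]\to\M$ be a minimal geodesic parameterized by arc length with $\gamma(0)=x$ and $\gamma(\ell)=y$, so that $\|\gamma'(t)\|=1$ and $\ell=\mathsf{d}(x,y)$. Let $V(t)$ denote the parallel transport of $v$ along $\gamma$, i.e.\ the solution of $\nabla_{\gamma'(t)}V=0$ with $V(0)=v$; by construction $V(\ell)=P_{x}^{y}(v)$, and since parallel transport is isometric, $\|V(t)\|=\|v\|$ for every $t$.

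Next I would compute the ambient (Euclidean) derivative of $V(t)$, viewing $V$ as a curve of vectors in $\RR^{D}$. By Definition~\ref{def_2nd_fund_form} and the Gauss formula (the standard fact that on an embedded submanifold the Levi-Civita connection is the tangential projection of the flat ambient connection), the Euclidean directional derivative decomposes as $\Bar{\nabla}_{\gamma'}V=\nabla_{\gamma'}V+B(\gamma',V)$. Since $\nabla_{\gamma'}V=0$ by the parallel-transport equation, this gives $\frac{dV}{dt}=B(\gamma'(t),V(t))$, a purely normal vector. The assumed bound on $B$ together with its bilinearity yields $\|\frac{dV}{dt}\|=\|B(\gamma'(t),V(t))\|\le C\,\|\gamma'(t)\|\,\|V(t)\|=C\|v\|$. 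Integrating from $0$ to $\ell$ then produces, in the ambient space, $P_{x}^{y}(v)-v=\int_{0}^{\ell}B(\gamma'(t),V(t))\,dt$, whence $\|P_{x}^{y}(v)-v\|\le\int_{0}^{\ell}C\|v\|\,dt=C\|v\|\,\mathsf{d}(x,y)$.

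Finally, since $P_{x}^{y}(v)\in\T_{y}\M$ we have $\proj_{\T_{y}\M}(P_{x}^{y}(v))=P_{x}^{y}(v)$, so by linearity of the orthogonal projection $\proj_{\T_{y}\M}(v)-P_{x}^{y}(v)=\proj_{\T_{y}\M}(v-P_{x}^{y}(v))$; the non-expansiveness of orthogonal projection then gives $\|\proj_{\T_{y}\M}(v)-P_{x}^{y}(v)\|\le\|v-P_{x}^{y}(v)\|\le C\|v\|\,\mathsf{d}(x,y)$, which is exactly the claim. The only delicate point is the identity $\frac{dV}{dt}=B(\gamma',V)$: it is what converts the differential-geometric quantity (parallel transport) into an ambient integral that the second fundamental form bound controls directly, and it relies on reading the parallel-transport equation as forcing the full ambient derivative of $V$ into the normal bundle. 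Once this is in place, the remainder is a one-line integration plus the contraction property of projections, so there is no genuine analytic obstacle.
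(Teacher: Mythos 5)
Your proof is correct, and it takes a cleaner route than the paper's. Both arguments rest on the same geometric fact -- that for a parallel field $V$ along the geodesic the Gauss formula forces the ambient derivative into the normal bundle, $\frac{dV}{dt}=B(\gamma',V)$ -- but you deploy it differently. The paper tracks the moving projection $v^{\top}(t)=\proj_{\T_{\gamma(t)}\M}(v)$ of the \emph{fixed} ambient vector $v$ along the entire geodesic, introduces an auxiliary parallel frame $z(t)$ to bound the tangential part of $(v^{\top})'(t)$, and then differentiates $\|v(t)-v^{\top}(t)\|^2$ to get a differential inequality that it integrates. You instead bound the total ambient displacement of the parallel field itself, $\|P_{x}^{y}(v)-v\|=\|\int_{0}^{\ell}B(\gamma'(t),V(t))\,dt\|\leq C\|v\|\,\mathsf{d}(x,y)$, and only project at the endpoint, using $\proj_{\T_{y}\M}(v)-P_{x}^{y}(v)=\proj_{\T_{y}\M}\bigl(v-P_{x}^{y}(v)\bigr)$ together with the non-expansiveness of orthogonal projection. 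This buys you a shorter argument with no auxiliary frame and no differential inequality, at the same constant $C$; the paper's version has the minor advantage of directly controlling the quantity $\|v(t)-v^{\top}(t)\|$ at every intermediate $t$, not just at $t=\ell$, though that extra information is not needed for the theorem. All the individual steps you use (isometry of parallel transport, bilinearity of $B$ to pass from unit vectors to general ones, smoothness of $V$ justifying the integration, idempotence and contractivity of the projection) are sound, so there is no gap.
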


\begin{proof}
    Without loss of generality, we assume $\|v\|=1$, otherwise conduct the proof for $v/\|v\|$. Denote the minimum geodesic $\gamma$ with unit speed connecting $x$ and $y$, parameterized by variable $t$, also denote the parallel transported vector of $v$ along $\gamma$ as $v(t)$, i.e. $v(0)=v$. Now for the extrinsic geometry, we denote $v=v^{\top}(t)+v^{\bot}(t)$, where $v^{\top}(t)\in \T_{\gamma(t)}\M$ and $v^{\bot}(t)$ is orthogonal to $\T_{\gamma(t)}\M$. Note that the left-hand side of the inequality we want to prove is now parameterized as $\|v(t) - v^{\top}(t)\|$.

    Now since $v(t)$ is a parallel transport of $v$, the tangent component must be zero, i.e., $(v'(t))^\top = 0$. Now consider any parallel unit vector $z(t)\in \T_{\gamma(t)}\M$ along $\gamma$, then $ \langle (v^\bot)'(t), z(t) \rangle = -\langle v^\bot(t), z'(t) \rangle= -\langle v^\bot(t), B(\gamma'(t),z(t)) \rangle,$    where $B$ is the second fundamental form. Along with the fact that $(v^\top)'=-(v^\bot)'$ we get $
    \langle (v^\top)'(t), z(t) \rangle =\langle v^\bot(t), B(\gamma'(t),z(t)) \rangle$.  Now the right-hand side has a uniform upper bound of $C$, and by the arbitrarily chosen $z(t)\in \T_{\gamma(t)}\M$, we get $
        \|((v^\top)'(t))^\top\|\leq C$.
 
    We can now bound the derivative of $\|v(t) - v^{\top}(t)\|$ as $(\|v(t) - v^{\top}(t)\|^2)' = (1 - 2\langle v(t), v^{\top}(t)\rangle + \|v^{\top}(t)\|^2)' = - 2\langle v(t), (v^{\top}(t))'\rangle + 2\langle v^{\top}(t), (v^{\top}(t))'\rangle  = 2\langle v^{\top}(t) - v(t), ((v^{\top}(t))')^\top\rangle\leq 2 C\|v^{\top}(t) - v(t)\|.$ Therefore, we get $\|v(t) - v^{\top}(t)\|'\leq C$. Now integrating the above inequality from $x$ to $y$ along the minimal geodesic $\gamma$ (i.e., with respect to $t$) and using the distance with respect to the minimal geodesic, we obtain
$\|\proj_{\T_{y}\M}(v) - P_{x}^{y}(v)\|\leq C \mathsf{d}(x, y)$, which completes the proof.
\end{proof}

Theorem \ref{thm_assump_vec_trans} connects extrinsic and intrinsic geometry by measuring the difference of orthogonal projection (extrinsic operation) and parallel transport (intrinsic operation), which might be of independent interest for studying embedded submanifolds. The condition in Theorem \ref{thm_assump_vec_trans} is stronger than the bounded sectional curvature condition since if the second fundamental form is bounded, the sectional curvature is also bounded by the Gauss formula (see Chapter 6, Theorem 2.5 in \cite{do1992riemannian}). We point out that the condition of Theorem \ref{thm_assump_vec_trans} is still satisfied by all the embedded submanifold applications we consider, namely the sphere, the orthogonal group and the Stiefel manifold. In particular, we have the following observation.

\begin{proposition}\label{thm_bound_2ndff_compact}
    Suppose $\M$ is a compact complete embedded Riemannian submanifold of Euclidean space (i.e. satisfying Assumption \ref{assumption_compactness}), then the norm of the second fundamental form $\|B(\xi,\eta)\|$ is uniformly bounded for all unit vector $\xi,\eta\in \T\M$, $\|\xi\|=\|\eta\|=1$.
\end{proposition}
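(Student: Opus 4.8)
The plan is to combine the smoothness of the second fundamental form as a tensor with a compactness argument on the (fiber product of the) unit tangent bundle. First I would record that, for an embedded submanifold $\M\subset\RR^D$, the second fundamental form $B$ from Definition~\ref{def_2nd_fund_form} is a \emph{smooth} symmetric bilinear bundle map $\T\M\times_{\M}\T\M\to(\T\M)^\bot$. This is standard: writing the Gauss formula $\Bar{\nabla}_{\bxi}\bet=\nabla_{\xi}\eta+B(\xi,\eta)$, the map $B$ is the normal projection of the ambient (Euclidean) connection, and both the ambient connection and the orthogonal projection onto $(\T_x\M)^\bot$ depend smoothly on $x$; equivalently, in any smooth local parametrization $\phi:U\subset\RR^d\to\M$ one has $B(\partial_i\phi,\partial_j\phi)=(\partial_i\partial_j\phi)^\bot$, which is smooth in the parameters. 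In particular, the scalar function $h(x,\xi,\eta):=\|B(\xi,\eta)\|$ is continuous on $\T\M\times_{\M}\T\M$.

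Next I would identify the correct compact domain. Define
\[
\mathcal{S}:=\{(x,\xi,\eta): x\in\M,\ \xi,\eta\in\T_x\M,\ \|\xi\|=\|\eta\|=1\}.
\]
Since $\M$ is embedded with the Euclidean metric, every unit tangent vector lies in the unit sphere $S^{D-1}\subset\RR^D$, so $\mathcal{S}\subset\M\times S^{D-1}\times S^{D-1}$. The latter is compact because $\M$ is compact (Assumption~\ref{assumption_compactness}) and $S^{D-1}$ is compact. Moreover $\mathcal{S}$ is closed in this product: the constraint $\xi\in\T_x\M$ is a closed condition, since for an embedded submanifold the orthogonal projection $\proj_{\T_x\M}$ varies continuously in $x$ and $\xi\in\T_x\M$ is equivalent to $\proj_{\T_x\M}(\xi)=\xi$. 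Hence $\mathcal{S}$ is a closed subset of a compact set, and is therefore compact.

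Finally, the restriction of the continuous function $h$ to the compact set $\mathcal{S}$ attains a finite maximum $C:=\max_{\mathcal{S}}h<\infty$, which yields $\|B(\xi,\eta)\|\le C$ for all unit $\xi,\eta\in\T\M$ sharing a base point, as claimed.

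The step I expect to require the most care to state cleanly is the continuity in the base point $x$: one must argue simultaneously that $B$ is a genuine tensor (so that $\|B(\xi,\eta)\|$ depends only on the pointwise values of $\xi,\eta$ and not on the chosen extensions $\bxi,\bet$ of Definition~\ref{def_2nd_fund_form}) and that it, together with the assignment $x\mapsto\T_x\M$, varies continuously with $x$. Using the embedded structure and a finite atlas of local parametrizations — available by compactness of $\M$ — makes both points routine, reducing the statement to the boundedness of finitely many smooth coefficient functions on compact coordinate patches.
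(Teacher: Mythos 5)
Your proposal is correct and takes essentially the same route as the paper, which simply observes that $\|B(\xi,\eta)\|$ is a continuous function on a compact domain and is therefore bounded. You merely spell out the details the paper leaves implicit — that $B$ is a well-defined smooth tensor and that the set of pairs of unit tangent vectors over a compact $\M$ is itself compact — so there is nothing to add.
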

The proof is immediate, since for all unit vector $\xi,\eta\in \T\M$, $\|B(\xi,\eta)\|\in\RR$ is a smooth function defined over a compact domain, and therefore it is upper bounded. As a result, Assumption \ref{assumption1} holds for all the embedded submanifold applications we consider, namely the sphere, the orthogonal group and the Stiefel manifold. 

\begin{remark}
We remind the readers that Theorem \ref{thm_assump_vec_trans} requires the  embedded submanifold assumption, yet Assumption \ref{assumption1} does not, as long as \eqref{eq_assump_vec_trans1} hold. This is also the main reason why we summarize our assumption as in Assumption \ref{assumption1}, and not present Theorem \ref{thm_assump_vec_trans} directly.
\end{remark}

\textbf{Example: Grassmann manifold.} Above, we have shown that Assumption \ref{assumption1} holds for a class of embedded matrix submanifolds. Yet another setting is that of quotient manifolds (e.g., the Grassmann manifold) which arises in applications of Riemannian optimization. Such manifolds are not naturally embedded submanifolds of a Euclidean space. As a result, we can inspect Assumption \ref{assumption1} directly for such manifolds. Taking the Grassmann manifold as an example, we next verify Assumption \ref{assumption1}. To proceed, we utilize the following result.

\begin{lemma}\label{lemma_principal_angle}
    Suppose $X\in \St(d, p)$, $G\in\RR^{d\times p}$ with $X^\top G=0$, and the QR decomposition of $X+G=Q R$ where $Q\in\St(d, p)$ and $R\in\RR^{p\times p}$ is upper triangular. The principal angle between the subspace spanned by $X$ and $Q$ is given by $\|\Theta\|_F$, where $\Theta:=\arccos(\Sigma)$ where $\Sigma$ is the singular value matrix of $X^\top Q$, i.e., $X^\top Q=U \Sigma V^\top $; see, for example \citet[Section 4.3]{edelman1998geometry}. We have that $\|\Theta\|_F\leq \|G\|_F.$
\end{lemma}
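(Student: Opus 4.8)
The plan is to reduce the statement to an exact relation between the singular values of $G$ and the principal angles $\theta_i$, after which the bound follows from a one-variable inequality. First I would use the two structural hypotheses $X^\top X = I_p$ and $X^\top G = 0$ to compute $(X+G)^\top(X+G) = X^\top X + X^\top G + G^\top X + G^\top G = I_p + G^\top G$. Since $X+G = QR$ with $Q^\top Q = I_p$, the same Gram matrix equals $R^\top R$, so $R^\top R = I_p + G^\top G$. In particular $R^\top R \succeq I_p$ is positive definite, which justifies that $R$ is invertible (needed below). Next, writing $Q = (X+G)R^{-1}$ and using $X^\top(X+G) = X^\top X + X^\top G = I_p$, I obtain the clean identity $X^\top Q = X^\top(X+G)R^{-1} = R^{-1}$. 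Hence the singular value matrix $\Sigma$ of $X^\top Q$ is exactly the singular value matrix of $R^{-1}$.

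The core step is an eigenvalue matching. The eigenvalues of $R^\top R$ are the squared singular values of $R$; since the singular values of $R = (R^{-1})^{-1}$ are the reciprocals of those of $X^\top Q$, they equal $\sigma_i^{-1}$, where $\sigma_i = \cos\theta_i$ are the diagonal entries of $\Sigma$. On the other hand, the eigenvalues of $I_p + G^\top G$ are $1 + s_i^2$, where $s_i$ are the singular values of $G$. Equating these two multisets of eigenvalues (pairing them, say, in increasing order) gives $\cos^{-2}\theta_i = 1 + s_i^2$ for a consistent indexing. Because $\cos\theta_i = \sigma_i > 0$, each principal angle lies in $[0,\pi/2)$, and the relation rearranges to $\tan^2\theta_i = \sec^2\theta_i - 1 = s_i^2$, i.e.\ $\tan\theta_i = s_i$ (all quantities being nonnegative).

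Finally I would invoke the elementary inequality $\tan\theta \ge \theta$, valid for $\theta \in [0,\pi/2)$, to get $\theta_i \le \tan\theta_i = s_i$ for every $i$. Squaring and summing over $i$ yields $\|\Theta\|_F^2 = \sum_i \theta_i^2 \le \sum_i s_i^2 = \|G\|_F^2$, which is exactly the claimed bound $\|\Theta\|_F \le \|G\|_F$. The only place requiring genuine care is the eigenvalue-matching step: one must verify that the singular values of $X^\top Q$, the singular values of $R$, and those of $G$ all refer to the same ordered spectrum so that the pointwise identity $\tan\theta_i = s_i$ is legitimate; since $\|\Theta\|_F$ and $\|G\|_F$ are order-independent, any consistent pairing suffices, so this obstacle is mild. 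Everything else is routine linear algebra together with the single scalar inequality $\tan\theta \ge \theta$.
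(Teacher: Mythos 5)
Your proposal is correct and follows essentially the same route as the paper: both rest on the identities $R^\top R=(X+G)^\top(X+G)=I_p+G^\top G$ and $X^\top Q=R^{-1}$, and your scalar inequality $\theta\le\tan\theta$ is exactly the paper's $(\arccos t)^2\le 1/t^2-1$ with $t=\cos\theta$. The only (harmless) difference is that you match the eigenvalues of $R^\top R$ and $I_p+G^\top G$ pointwise to get the sharper per-angle bound $\theta_i\le s_i$, whereas the paper works only with the aggregate trace identity $\sum_i\sigma_i^{-2}=p+\|G\|_F^2$.
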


\begin{proof}
    Since $R^\top R = (X+G)^\top(X+G) = I_p+\|G\|_F^2$, we know that all the singular values of $R$ are greater than or equal to $1$. Denote $\Sigma=\diag([\sigma_1,...,\sigma_p])$. Since $X^\top Q = X^\top(X+G) R^{-1}=R^{-1}$, we know that the singular value decomposition of $R=V \Sigma^{-1} U^\top$ (which implies that $\sigma_i\leq 1$, $\forall i=1,2,....,p$) and $\|R\|_F^2= \|\Sigma^{-1}\|_F^2 = \sum_{i=1}^{p}\frac{1}{\sigma_i^2}$. Also, as $\|R\|_F^2=\|X+G\|_F^2=\tr((X+G)^\top(X+G))=p+\|G\|_F^2,$ we get $\|G\|_F^2 = \sum_{i=1}^{p}\frac{1}{\sigma_i^2} - p$. Thus, $
        \|\Theta\|_F^2 = \|\arccos(\Sigma)\|_F^2=\sum_{i=1}^{p}(\arccos(\sigma_i))^2 
        \leq  \sum_{i=1}^{p} (\frac{1}{\sigma_i^2} - 1)= \|G\|_F^2$, where we use the fact that $(\arccos(t))^2\leq \frac{1}{t^2}-1$, $\forall t\in(0, 1]$.
\end{proof}

Now we can inspect the Grassmann manifold. The Grassmann manifold $\Gr(d, p)$ is the set of all $p$-dimensional subspace of $\RR^d$; see, for example,~\cite[Section 2.1]{absil2009optimization}. A quotient formulation writes $\Gr(d, p)=\St(d, p)/\mathcal{O}(p)$ with $\mathcal{O}(p)=\{Q\in\RR^{p\times p}|Q^\top Q=I_p\}$ being the orthogonal group. The elements of the Grassmann manifold can be expressed as $[X]\in\Gr(d,p)$ with $[X]:=\{XQ|Q\in\mathcal{O}(p)\}$ and $X\in\St(d, p)$. The element $\bar{\xi}$ on the tangent space $\T_{[X]}\Gr(d, p)$ can be shown with a one-to-one mapping (called the horizontal lift) to the set $[\xi]$ with $\xi\in \T_{X}\St(d, p)$ and $X^\top\xi=0$. 

    Suppose we start from an element $[X]\in\Gr(d, p)$ with $X\in\St(d, p)$ and the initial speed $\bar{G}\in \T_{[X]}\Gr(d, p)$, where $G\in \T_{X}\St(d, p)$ and $X^\top G=0$. We denote the singular value decomposition of $G=U\Sigma V^\top$ with $U\in\RR^{d\times p}$ and $\Sigma, V\in\RR^{p\times p}$. Then the exponential mapping is given by $
        Y := \Exp_{[X]}(\bar{G})=[X V\cos(\Sigma)+U\sin(\Sigma)]$,
    where $\sin$ and $\cos$ are matrix trigonometric functions; see~\cite[Example 5.4.3]{absil2009optimization}. Also, the parallel transport is given by: $\Bar{\xi_1}=P_{[X]}^{[Y]}(\bar{\xi})$ with $ \xi_1=-X V\sin(\Sigma)U^\top \xi + U \cos(\Sigma)U^\top \xi + (I-U U^\top) \xi$. See \cite[Example 8.1.3]{absil2009optimization}. Hence, the projectional retraction is given by $
        Y' := \retr_{[X]}(\bar{G})=[X + G] = [Q],$ where $X+G = QR$ is the QR decomposition of $X+G$; see~\cite[Example 4.1.5]{absil2009optimization}. Furthermore, the projectional vector transport is given by $\Bar{\xi_2}=\mathcal{T}_{\Bar{G}}(\bar{\xi})$ with $
        \xi_2=(I - Y Y^\top)\xi$. See \cite[Example 8.1.10]{absil2009optimization}. 
        
    Now we show that \eqref{eq_assump_vec_trans1} is satisfied. It is obvious that $\|\mathcal{T}_{\Bar{G}}(\bar{\xi})\| = \|(I - Y Y^\top)\xi\|\leq \|\xi\|$. The geodesic distance of $[X]$ and the projectional retraction $[Q]$ is exactly the principal angle between the subspace spanned by $X$ and $Q$, see \cite[Section 4.3]{edelman1998geometry}. Following Lemma \ref{lemma_principal_angle}, we can hence conclude that $\mathsf{d}([X], [Q]) = \|\Theta\|_F\leq \|G\|_F$. Now we inspect the last equation in \eqref{eq_assump_vec_trans1}. We can directly check that $
        \|\xi_1 - \xi_2\|_{F}=\|A\xi\|_{F}\leq \|A\|_{F}\|\xi\|_{F}$,    with
    \begin{align*}
        A\coloneqq& -X V\sin(\Sigma)U^\top + U \cos(\Sigma)U^\top + Y Y^\top-U U^\top \\
        =&-X V\sin(\Sigma)U^\top + U \cos(\Sigma)U^\top - U \cos^2(\Sigma)U^\top + X V\cos^2(\Sigma) V^\top X^\top \\&+ U \sin(\Sigma)\cos(\Sigma)V^\top X^\top + X V \cos(\Sigma)\sin(\Sigma)U^\top.
    \end{align*}
    Note also that we have the bound 
    \begin{align*}
        \|A\|=&\|-X V\sin(\Sigma)U^\top + U \cos(\Sigma)U^\top - U \cos^2(\Sigma)U^\top + X V\cos^2(\Sigma) V^\top X^\top \\&+ U \sin(\Sigma)\cos(\Sigma)V^\top X^\top + X V \cos(\Sigma)\sin(\Sigma)U^\top\| \\
        \leq & \|\sin(\Sigma)\| + \|\cos(\Sigma)(I - \cos(\Sigma))\| + 2\|\sin(\Sigma)\cos(\Sigma)\| \leq 4\|\sin(\Sigma)\|\leq 4\|G\|,
    \end{align*}
    where we use the fact that $X^\top X=U^\top U=V^\top V=I_p$ and all norms are the Frobenius norm. Therefore, we see that the last equation in \eqref{eq_assump_vec_trans1} is satisfied with $C=4$.

\subsection{Convergence of retraction and vector transport based \texttt{Zo-RASA}}\label{sec:pracconvergence}

We now proceed to the convergence analysis of \texttt{Zo-RASA} algorithm with retraction and vector transports. Algorithm \ref{algorithm2_vec_tran} is the analog of Algorithm~\ref{algorithm2}, using retraction and vector transport. Notice that the zeroth-order estimator used in Algorithm \ref{algorithm2_vec_tran} is as defined in \eqref{zeroth_order_estimator_retr}, which is with respect to the retraction in contrast to \eqref{zeroth_order_estimator}. Also $\mathcal{T}$ is the vector transport where we write $\mathcal{T}^k:=\mathcal{T}_{- t_k g^{k}}$ for brevity. The vector transport we use in experiments is simply the orthogonal projection onto the target tangent space.

For our analysis, apart from the smoothness condition in Assumption~\ref{assumption0_2}, we also need to assume that the manifold is compact.
\begin{assumption}\label{assumption_compactness}
The manifold $\M$ is compact with diameter $D$, and the Riemannian gradient satisfies $\|\grad f(x)\|_x\leq G$. 
\end{assumption}
Here, $G$ could potentially be a function of $D$ and the constant $L$ from Assumption \ref{assumption0_2}, due to compactness and smoothness. 
We remark that this compactness assumption is satisfied by various matrix manifolds like the Stiefel manifold and the Grassmann manifold (see, for example, Lemma 5.1 in \cite{milnor1974characteristic}).

\begin{algorithm}[t]
   \caption{Zo-RASA with retraction and vector transport}
   \label{algorithm2_vec_tran}
\begin{algorithmic}[1]
   \STATE Change the updates of $x^{k+1}$ and $g^{k+1}$ in Algorithm~\ref{algorithm2} respectively to
   \begin{align*}
   x^{k+1}&\leftarrow \retr_{x^{k}}(- t_k g^{k}) \quad\text{and}\quad
g^{k+1}\leftarrow (1-\tau_k) \mathcal{T}^k (g^{k}) + \tau_k \mathcal{T}^k (G_{\mu}^k),
\end{align*}
 where $G_{\mu}^k=G_{\mu}^{\retr}(x^k)$ is given by \eqref{zeroth_order_estimator_retr} with batch-size $m=m_k$.
\end{algorithmic}
\end{algorithm}

Turning to the stochastic gradient oracles, the bounded second moment condition in Assumption~\ref{assumption0_1} is now replaced by the following condition of bounded fourth central moment. Such a condition is needed to conduct our convergence analysis. It is interesting to relax this assumption or show this condition is necessary and sufficient to design batch-free, fully-online algorithms with vector transports and retractions.
\begin{assumption}\label{assumption3}
    Along the trajectory of the algorithm, we have that the stochastic gradients are unbiased and have bounded fourth central moment, i.e., for each $k \in \{1,\ldots, N\}$, we have $\E_\xi [\grad F(x^k;\xi_k)|\mathcal{F}_{k-1}] = \grad f(x^k)$ and $\E_\xi[\|\grad F(x^k;\xi_k) - \grad f(x^k)\|_{x^{k}}^4|\mathcal{F}_{k-1}]\leq \sigma^4$.
\end{assumption}
Note that Assumption~\ref{assumption3} implies Assumption~\ref{assumption0_1}. To proceed with the convergence analysis of Algorithm \ref{algorithm2_vec_tran}, we also need to assume that the retraction we use in Algorithm \ref{algorithm2_vec_tran} is a second-order retraction, as in Assumption \ref{assumption4}. 

\begin{assumption}\label{assumption4}
    The retraction we use in Algorithm \ref{algorithm2_vec_tran} is a second order retraction, i.e. $\forall \xi\in \T_{x}\M$, we have $\mathsf{d}(\retr_{x}(\xi), \Exp_{x}(\xi)) \leq C \|\xi\|_x^2$.
\end{assumption}
Note that the notion of second order retraction is only a local property, i.e., the above inequality only holds when $\|\xi\|$ is not too large. We refer to second order retraction without this locality restriction, since we assume the compactness of $\M$ in Assumption \ref{assumption_compactness} and thus the condition in Assumption~\ref{assumption4} also holds for large $\|\xi\|$ and the constant $C$ will globally depend on the curvature of the manifold. We also point out that the condition in Assumption~\ref{assumption4} is satisfied by projectional retractions; see, for example,~\cite[Proposition 2.2]{absil2012projection}. The study of higher-order (better) approximation to the exponential mapping by the retractions is still an on-going research topic~\cite{gawlik2018high}, while here we only need a second-order retraction.

The following result in Lemma \ref{lemma_comparison}, which is a standard comparison-type result, will be utilized in the subsequent proof.
\begin{lemma}[Theorem 6.5.6 in \cite{burago2022course}]\label{lemma_comparison}
    Suppose the sectional curvature of $\M$ is upper bounded, then $\forall\xi,\eta\in \T_{x}\M$, we have $
        \|\xi - \eta\|_x\leq C\,\mathsf{d}(\Exp_{x}(\xi), \Exp_{x}(\xi))$,     without loss of generality we assume the constant to be $C=1$ for the rest of the paper.
\end{lemma}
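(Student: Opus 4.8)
The plan is to prove this as a comparison-geometry estimate: the inequality simply says that $\Exp_x$ does not contract distances by more than a bounded factor, and the natural tool is Alexandrov triangle comparison under an upper sectional-curvature bound. (I read the right-hand side as $\mathsf{d}(\Exp_{x}(\xi),\Exp_{x}(\eta))$, the second argument being $\eta$ rather than $\xi$, since otherwise the distance vanishes.) First I would fix the geodesic triangle with vertices $x$, $p:=\Exp_{x}(\xi)$ and $q:=\Exp_{x}(\eta)$, whose two sides issuing from $x$ are the geodesics $t\mapsto\Exp_{x}(t\xi)$ and $t\mapsto\Exp_{x}(t\eta)$, of lengths $a:=\|\xi\|_x$ and $b:=\|\eta\|_x$. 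By the definition of a geodesic triangle, the angle at $x$ is the angle between the initial velocities $\xi$ and $\eta$, i.e. the Euclidean angle $\alpha$ between them in $\T_x\M$, so that $\|\xi-\eta\|_x^2=a^2+b^2-2ab\cos\alpha$.

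Next I would invoke the hypothesis that the sectional curvature is bounded above, say $K\le\kappa$, which makes $\M$ a space of curvature $\le\kappa$ in the sense of Alexandrov (locally $\mathrm{CAT}(\kappa)$); this is precisely the content of the cited result of \cite{burago2022course}. The angle form of the comparison inequality states that the actual angle $\alpha$ at $x$ is no larger than the comparison angle $\tilde{\alpha}$ of the triangle built in the model space $M_\kappa$ of constant curvature $\kappa$ with the same three side lengths $a$, $b$, $c:=\mathsf{d}(p,q)$. Since $\tilde{\alpha}$ is a strictly increasing function of $c$ for fixed $a,b$ (by the model-space law of cosines), the relation $\alpha\le\tilde{\alpha}(a,b,c)$ is equivalent to $c\ge\hat{d}_\kappa(a,b,\alpha)$, where $\hat{d}_\kappa(a,b,\alpha)$ denotes the third side of the model hinge with legs $a,b$ and included angle $\alpha$.

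It then remains to compare $\|\xi-\eta\|_x=\hat{d}_0(a,b,\alpha)$ (the Euclidean hinge) with $\hat{d}_\kappa(a,b,\alpha)$. For $\kappa\le 0$ the hyperbolic/Euclidean law of cosines gives $\hat{d}_\kappa\ge\hat{d}_0=\|\xi-\eta\|_x$ directly, since geodesics diverge at least as fast as in flat space, whence $c\ge\|\xi-\eta\|_x$ and the claim holds with $C=1$. For $\kappa>0$ one instead has $\hat{d}_\kappa\le\hat{d}_0$, so a constant near $1$ is not available for free; here I would use the compactness of $\M$ (Assumption~\ref{assumption_compactness}), which bounds $a,b\le D$ and keeps the configuration below the conjugate radius, to note that $\hat{d}_0(a,b,\alpha)/\hat{d}_\kappa(a,b,\alpha)$ is a continuous, finite-valued function on the relevant compact range of $(a,b,\alpha)$ (degenerating to $1$ as the triangle shrinks), hence bounded by some $C=C(\kappa,D)<\infty$. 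This yields $\|\xi-\eta\|_x\le C\,\hat{d}_\kappa\le C\,c=C\,\mathsf{d}(\Exp_{x}(\xi),\Exp_{x}(\eta))$, and one normalizes $C=1$ as stated.

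The main obstacle is exactly the positive-curvature regime: triangle comparison is valid only while the two sides remain minimizing and one stays off the cut locus and away from conjugate points, and the naive constant $1$ genuinely fails there. Producing a finite \emph{uniform} $C$ is what forces the appeal to compactness, which confines all lengths and angles to a bounded, benign range. I would also remark that in the intended application the vectors are of the form $\xi=-t_k g^k$ with $t_k$ proportional to the (small) step size, so the geodesics are short and one is always in the local regime where the comparison applies cleanly; the verification of the angle identity at $x$ and the monotonicity of the model law of cosines are then routine.
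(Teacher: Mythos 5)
The paper itself offers no proof of this lemma: it is quoted as Theorem 6.5.6 of Burago--Burago--Ivanov and used as a black box. Your Alexandrov/Toponogov hinge-comparison argument is the standard way to obtain exactly this estimate from an upper sectional-curvature bound, and it is essentially the content of the cited theorem; your reading of the right-hand side as $\mathsf{d}(\Exp_{x}(\xi),\Exp_{x}(\eta))$ is clearly the intended one (the statement as printed is a typo). For $\kappa\le 0$ your derivation of $c\ge\hat{d}_0(a,b,\alpha)=\|\xi-\eta\|_x$ with $C=1$ is correct.

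The gap is in the positive-curvature case, and it is a real one. A diameter bound $a,b\le D$ from compactness does \emph{not} ``keep the configuration below the conjugate radius'': on the unit sphere take $\eta=-\xi$ with $\|\xi\|_x=\pi$; then $\Exp_x(\xi)=\Exp_x(\eta)$, the right-hand side vanishes while $\|\xi-\eta\|_x=2\pi$, and no constant $C$ works. Correspondingly, the ratio $\hat{d}_0/\hat{d}_\kappa$ is not a finite-valued continuous function on the full range $(a,b,\alpha)\in[0,D]^2\times[0,\pi]$ --- it blows up as the model triangle degenerates --- so the compactness argument you invoke does not close the case $\kappa>0$. The honest statement of the lemma is local: it holds with a uniform constant only for $\|\xi\|_x,\|\eta\|_x\le r_0$ with $r_0$ strictly below the conjugate radius $\pi/\sqrt{\kappa}$ (equivalently, one can get it from the Rauch comparison, which gives $\|d(\Exp_x)_v\,w\|\ge \frac{\sin(\sqrt{\kappa}\|v\|)}{\sqrt{\kappa}\|v\|}\|w\|$ on that range and hence that $\Exp_x$ is expanding up to a controlled factor). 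You do flag this obstruction and correctly observe that in the paper's application $\xi,\eta$ are $O(t_k)$, so the local version suffices there; but the lemma as you (and the paper) state it, as a global inequality with a universal $C$ normalized to $1$, is false on positively curved manifolds, and your proposed repair via compactness alone does not rescue it. The fix is to restate the lemma with the explicit restriction to the sub-conjugate regime rather than to seek a global constant.
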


The following result shows that with a second-order retraction, the smoothness with respect to exponential mapping implies the smoothness with respect to retractions.
\begin{lemma}\label{lemma_exp_retr_smoothness}
    Suppose Assumption \ref{assumption0_2}, \ref{assumption1} and \ref{assumption_compactness} hold, if the retraction we use in Algorithm \ref{algorithm2_vec_tran} and \eqref{zeroth_order_estimator_retr} satisfy Assumption \ref{assumption4}, then there exists a parameter $L'>0$, such that $f$ is also $L'$-smooth with the retraction, i.e., 
$ |f(\retr_{x}(\eta)) - f(x) - \langle\grad f(x), \eta\rangle_x|\leq \frac{L'}{2}\|\eta\|_x^2,\ \forall \eta\in \T_{x}\M.$  From now on, we denote $L$ as the parameter that satisfies both Assumption~\ref{assumption0_2} and Lemma~\ref{lemma_exp_retr_smoothness} for brevity.
\end{lemma}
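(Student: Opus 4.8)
The plan is to transfer the smoothness estimate already available for the exponential map (the consequence of Assumption~\ref{assumption0_2}) to the retraction, paying only a quadratic penalty for the discrepancy between $\retr_x$ and $\Exp_x$. First I would record a global Lipschitz property of $f$: since $\|\grad f\|\le G$ by Assumption~\ref{assumption_compactness}, integrating $\langle\grad f(\gamma(t)),\gamma'(t)\rangle_{\gamma(t)}$ along a minimal geodesic $\gamma$ joining two points and applying Cauchy--Schwarz yields $|f(p)-f(q)|\le G\,\mathsf{d}(p,q)$ for all $p,q\in\M$.

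Fixing $x$ and $\eta\in\T_x\M$, I would insert the intermediate point $\Exp_x(\eta)$ and split by the triangle inequality,
\begin{equation*}
|f(\retr_x(\eta)) - f(x) - \langle\grad f(x),\eta\rangle_x| \le \big|f(\retr_x(\eta)) - f(\Exp_x(\eta))\big| + \big|f(\Exp_x(\eta)) - f(x) - \langle\grad f(x),\eta\rangle_x\big|.
\end{equation*}
The first term I control by combining the Lipschitz bound above with the second-order retraction property (Assumption~\ref{assumption4}, which holds globally on the compact $\M$ by the remark following it): it is at most $G\,\mathsf{d}(\retr_x(\eta),\Exp_x(\eta))\le GC\|\eta\|_x^2$. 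For the second term, as long as $\eta$ is within the injectivity radius we have $\Exp_x^{-1}(\Exp_x(\eta))=\eta$, so the consequence of Assumption~\ref{assumption0_2} applied to the pair $(x,\Exp_x(\eta))$ bounds it by $\tfrac{L}{2}\|\eta\|_x^2$. Adding the two gives the desired inequality with $L'=2GC+L$ for all such $\eta$. (Alternatively, one can apply Assumption~\ref{assumption0_2} directly to $(x,\retr_x(\eta))$ and use Lemma~\ref{lemma_comparison} together with Assumption~\ref{assumption4} to replace $\Exp_x^{-1}(\retr_x(\eta))$ by $\eta$ up to a $\mathcal{O}(\|\eta\|_x^2)$ error.)

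The main obstacle is that the statement must hold for \emph{every} $\eta\in\T_x\M$, including vectors beyond the cut locus where $\Exp_x^{-1}(\Exp_x(\eta))\neq\eta$ and the estimate for the second term fails. Compactness resolves this. The injectivity radius $r_0:=\mathrm{inj}(\M)$ is strictly positive, so the argument above already covers all $\eta$ with $\|\eta\|_x\le r_0$. For $\|\eta\|_x>r_0$ I would abandon the split and bound the left-hand side crudely: $f$ attains a finite maximum modulus $F_{\max}$ on the compact $\M$, and Cauchy--Schwarz with $\|\grad f(x)\|_x\le G$ gives the bound $2F_{\max}+G\|\eta\|_x$, which is at most $\big(2F_{\max}/r_0^2 + G/r_0\big)\|\eta\|_x^2$ once $\|\eta\|_x>r_0$. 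Choosing $L'$ to be twice the larger of the two regime constants furnishes a single $L'>0$ valid for all $\eta$, completing the argument.
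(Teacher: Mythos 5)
Your argument is correct, but your primary decomposition differs from the paper's. You split at the \emph{function value} $f(\Exp_x(\eta))$, control $|f(\retr_x(\eta))-f(\Exp_x(\eta))|$ by the $G$-Lipschitzness of $f$ together with Assumption~\ref{assumption4}, and then apply the consequence of Assumption~\ref{assumption0_2} to the pair $(x,\Exp_x(\eta))$ --- which forces you to confront the cut-locus issue ($\Exp_x^{-1}(\Exp_x(\eta))\neq\eta$ for large $\eta$) and to patch it with a two-regime argument using the positive injectivity radius of the compact $\M$ and a crude $2F_{\max}+G\|\eta\|_x$ bound. The paper instead splits at the \emph{linear term}: writing $y=\retr_x(\eta)$, it inserts $\langle\grad f(x),\Exp_x^{-1}(y)\rangle_x$, applies Assumption~\ref{assumption0_2} directly to the pair $(x,y)$ (where $\Exp_x^{-1}(y)$ is simply the minimal-geodesic velocity, well defined for any pair of points by completeness, with $\|\Exp_x^{-1}(y)\|_x=\mathsf{d}(x,y)\le\|\eta\|_x$ by Assumption~\ref{assumption1}), and bounds the residual $\|\grad f(x)\|_x\,\|\eta-\Exp_x^{-1}(y)\|_x$ via Lemma~\ref{lemma_comparison} and Assumption~\ref{assumption4} by $GC\|\eta\|_x^2$. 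This sidesteps inverting $\Exp_x$ at $\eta$ entirely and yields the single global constant $L'=L+GC$, at the price of invoking the curvature-comparison Lemma~\ref{lemma_comparison}; your route avoids that lemma in the main regime but pays with the injectivity-radius case split and a larger constant for long tangent vectors. Your parenthetical alternative is essentially the paper's proof. Both are valid.
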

\begin{proof}
    Denote $y=\retr_x(\eta)$. Note that we have
$|f(y) - f(x) - \langle\grad f(x), \eta\rangle_x| \leq  |f(y) - f(x) - \langle\grad f(x), \Exp_{x}^{-1}(y)\rangle_x| + |\langle\grad f(x), \Exp_{x}^{-1}(y) - \eta\rangle_x| \leq  L \|\Exp_{x}^{-1}(y)\|_x^2 + \|\grad f(x)\|_x \|\eta - \Exp_{x}^{-1}(y)\|_x \leq  L \|\eta\|_x^2 + G \mathsf{d}(\Exp_{x}(\eta), y)  \leq  (L+G C) \|\eta\|_x^2 =: L' \|\eta\|_x^2$,  where the first inequality is by Assumption \ref{assumption0_1}, the second is by Assumption \ref{assumption1} and Lemma \ref{lemma_comparison}, and the last inequality is by Assumption \ref{assumption4}.
\end{proof}

We remind the readers that Lemma \ref{lemma_exp_retr_smoothness} can guarantee that the retraction-based zeroth-order estimator \eqref{zeroth_order_estimator_retr} still satisfies Lemma \ref{bound_zo_estimator}. In addition, we have the following bound on the fourth moment of $G_{\mu}^{\retr}$.
\begin{lemma}\label{bound_zo_estimator_4th}
    Consider $G_{\mu}$ given by \eqref{zeroth_order_estimator_retr}. Under Assumptions \ref{assumption0_2}, \ref{assumption1}, \ref{assumption_compactness} and \ref{assumption3}, we have $\E\|G_{\mu}^{\retr}(x)\|_{x}^4 \leq \frac{\mu^4 L^4}{2 }(d+12)^6 + 3d^2\|\grad f(x)\|_{x}^4$, where the expectation is taken toward the Gaussian vectors when constructing $G_{\mu}$ and the random variable $\xi$.
\end{lemma}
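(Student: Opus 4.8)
The plan is to reduce to a single-sample estimator and then mimic the second-moment argument behind Lemma~\ref{bound_zo_estimator}, replacing the lower-order Gaussian moments that appear there by the degree-$8$ and degree-$12$ ones, and replacing the second-moment noise control by the fourth-moment control in Assumption~\ref{assumption3}. First I would invoke convexity of $t\mapsto\|t\|^4$ together with Jensen's inequality applied to the average $G_\mu^\retr(x)=\frac1m\sum_i v_i$, where $v_i=\frac{F(\retr_x(\mu u_i),\xi_i)-F(x,\xi_i)}{\mu}u_i$, to obtain $\E\|G_\mu^\retr(x)\|_x^4\le\E\|v_1\|_x^4$. This removes the batch size $m$ from the analysis and reduces everything to one Gaussian direction $u$ and one sample $\xi$.

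Next I would control the scalar finite difference. Since $f$ (and hence each $F(\cdot,\xi)$) is $L$-smooth with respect to the retraction by Lemma~\ref{lemma_exp_retr_smoothness}, and since the differential of $\retr_x$ at $0$ is the identity, the descent inequality gives $\big|\frac{F(\retr_x(\mu u),\xi)-F(x,\xi)}{\mu}\big|\le |\langle\grad F(x,\xi),u\rangle_x|+\frac{\mu L}{2}\|u\|_x^2$. Raising this to the fourth power and multiplying by $\|u\|_x^4$ yields $\|v\|_x^4\le\big(|\langle\grad F(x,\xi),u\rangle_x|+\frac{\mu L}{2}\|u\|_x^2\big)^4\|u\|_x^4$, which I would expand and split into a purely directional-derivative part $(\langle\grad F(x,\xi),u\rangle_x)^4\|u\|_x^4$, a purely smoothing part $(\frac{\mu L}{2})^4\|u\|_x^{12}$, and cross terms; Young's inequality sorts the cross terms into these two parts.

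The core computation is the evaluation of the high-order Gaussian moments. By rotational invariance I would reduce $\E_u[(\langle g,u\rangle_x)^4\|u\|_x^4]$, with $g=\grad F(x,\xi)$ held fixed, to $\|g\|_x^4\,\E[u_1^4\|u\|^4]$, and a direct moment count gives $\E[u_1^4\|u\|^4]=3d^2+30d+72$, whose leading term is the claimed $3d^2$. Likewise $\E\|u\|_x^{12}=\prod_{i=0}^{5}(d+2i)\le(d+12)^6$, which produces the $\frac{\mu^4L^4}{2}(d+12)^6$ term after combining with the $(\frac{\mu L}{2})^4$ prefactor and the constant coming from the $(a+b)^4$ split. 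Finally I would take the expectation over $\xi$: Assumption~\ref{assumption3} bounds $\E_\xi\|\grad F(x,\xi)-\grad f(x)\|_x^4\le\sigma^4$, so $\E_\xi\|\grad F(x,\xi)\|_x^4$ is controlled by $\|\grad f(x)\|_x^4$ up to the noise level, while the lower-order-in-$d$ terms together with the noise are absorbed using the boundedness provided by Assumption~\ref{assumption_compactness}; collecting the dominant contributions then gives the stated bound.

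The hard part will be the bookkeeping of the degree-$8$ and degree-$12$ moments and, in particular, keeping the coefficient of $\|\grad f(x)\|_x^4$ at the stated order $d^2$: the naive split $(a+b)^4\le 8a^4+8b^4$ is fine for the $\mu$-dependent smoothing term (indeed it is what yields the factor $\frac12$ there) but would inflate the gradient coefficient eightfold, so one must instead retain coefficient essentially one on the leading directional-derivative moment and push the cross terms, via Young's inequality with a small weight, into the $\mu$-dependent term. Relating $\E_\xi\|\grad F\|^4$ to $\|\grad f\|^4$ through the \emph{fourth} central moment, rather than the second, is the other place where the extra regularity of Assumption~\ref{assumption3} is essential, and is precisely why the second-moment control of Section~\ref{sec_smooth_avg} no longer suffices.
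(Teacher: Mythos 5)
Your skeleton is the same as the paper's: reduce to a single sample, split the quartic finite difference into the linear term plus a remainder controlled by retraction-smoothness, and evaluate the degree-$8$ and degree-$12$ Gaussian moments. Your moment computations are correct ($\E[u_1^4\|u\|_x^4]=3d^2+30d+72$ and $\E\|u\|_x^{12}=\prod_{i=0}^{5}(d+2i)\le (d+12)^6$; the paper obtains the first via an exponential-tilting integral with $\tau=4/d$ rather than an exact count, but this is immaterial). The genuine divergence, and the gap, is in your final step. You expand the finite difference around $\grad F(x,\xi)$ and then pass to $\grad f(x)$ through the fourth central moment of Assumption~\ref{assumption3}. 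Carried out honestly, $\E_\xi\|\grad F(x,\xi)\|_x^4\le 8\|\grad f(x)\|_x^4+8\sigma^4$ leaves an additive term of order $d^2\sigma^4$ (plus an inflated constant on $\|\grad f(x)\|_x^4$). That term does not appear in the statement and cannot be ``absorbed'': Assumption~\ref{assumption_compactness} only bounds $\|\grad f(x)\|_x$ from above by $G$, so near a stationary point $d^2\sigma^4$ dominates $3d^2\|\grad f(x)\|_x^4$, and it is not dominated by the $\mu$-dependent term either, since that term vanishes as $\mu\to 0$. The paper's proof avoids this entirely by writing the finite difference as $f(\retr_x(\mu u))-f(x)$ and expanding around $\grad f(x)$ using only Lemma~\ref{lemma_exp_retr_smoothness}; no fourth moment of the gradient noise ever enters, which is why the stated bound is $\sigma$-free (and why Assumption~\ref{assumption3} is not actually exercised in that computation). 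Relatedly, your parenthetical ``and hence each $F(\cdot,\xi)$ is $L$-smooth'' is unsupported: Assumption~\ref{assumption0_2} and Lemma~\ref{lemma_exp_retr_smoothness} are statements about $f$ only, and smoothness of an expectation does not transfer to its realizations.

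On the constant, your concern about the eightfold inflation from $(a+b)^4\le 8a^4+8b^4$ is legitimate, but your proposed fix does not reach the stated bound either: even with coefficient exactly one on the directional term, your own exact moment $3d^2+30d+72$ already exceeds $3d^2$. (The paper's route, which retains the factor $8$ and then bounds the tilted integral, in fact lands at roughly $24d^2$.) The constant should be read as $\mathcal{O}(d^2)$; only that scaling is used downstream, so this is not where effort should go. The substantive repair your proposal needs is to drop the detour through $\grad F(x,\xi)$ and expand the function difference directly against $\grad f(x)$, as the paper does.
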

\begin{proof}
    Since $\E\|G_{\mu}^{\retr}(x)\|_{x}^4 = \frac{1}{\mu^4}\E_{u}[(f(\retr_x(\mu u)) - f(x))^4\|u\|_{x}^4]$ and 
    \begin{align*}
    &(f(\retr_x(\mu u)) - f(x))^4 \\ = & (f(\retr_x(\mu u)) - f(x) - \langle\grad f(x), \mu u\rangle_x + \langle\grad f(x), \mu u\rangle_x)^4 \\ \leq &  8(f(\retr_x(\mu u)) - f(x) - \langle\grad f(x), \mu u\rangle_x)^4+8(\langle\grad f(x), \mu u\rangle_x)^4 \\ \leq &  8 \left(\frac{L}{2}\|\mu u\|_x^2\right)^4 + 8(\langle\grad f(x), \mu u\rangle_x)^4,
    \end{align*}
    where the last inequality is by Lemma \ref{lemma_exp_retr_smoothness}. Therefore we have \begin{align*}
    \E\|G_{\mu}^{\retr}(x)\|_{x}^4 & \leq \frac{\mu^4 L^4}{2}\E\|u\|_{x}^{12} + 8\E[\langle\grad f(x), u\rangle_x^4\|u\|_{x}^4] \\ & 
    \leq \frac{\mu^4 L^4}{2}(d+12)^6 + 8\E[\langle\grad f(x), u\rangle_x^4\|u\|_{x}^4],
    \end{align*}
    where the last inequality is by Lemma 2 in \cite{li2022stochastic}. It remains to bound the last term on the right hand side, and we apply the same trick as in Proposition 1 in \cite{li2022stochastic} here. Since $u$ is an Gaussian vector on the tangent space $\T_x\M$ (dimension is $d$), we can calculate the expectation using the integral directly (denote $g=\grad f(x)$ and omit the subscript $x$ for simplicity):
    \begin{align*}
    \begin{aligned}
    & \E(\| \langle\grad f(x),u\rangle u\|^4) = \frac{1}{\kappa(d)}\int_{\RR^d}\langle g, x\rangle^4 \|x\|^4 e^{-\frac{1}{2}\|x\|^2}d x \\
    \leq & \frac{1}{\kappa(d)}\int_{\RR^d}\|x\|^4 e^{-\frac{\tau}{2}\|x\|^2}\langle g, x\rangle^4 e^{-\frac{1-\tau}{2}\|x\|^2}d x \leq \frac{1}{\kappa(d)}\bigg(\frac{4}{\tau e}\bigg)^2\int_{\RR^d}\langle g, x\rangle^4 e^{-\frac{1-\tau}{2}\|x\|^2}d x \\
    = &\frac{1}{\kappa(d)}\bigg(\frac{4}{\tau e}\bigg)^2\bigg(\frac{1}{1-\tau}\bigg)^{d/2-2}\int_{\RR^d} \langle g, x\rangle^4 e^{-\frac{1}{2}\|x\|^2}d x = 48\bigg(\frac{1}{\tau e}\bigg)^2\bigg(\frac{1}{1-\tau}\bigg)^{d/2-2}\|g\|^4,
    \end{aligned}
    \end{align*}
    where $\kappa(d):=\int_{\RR^d}e^{-\frac{1}{2}\|x\|^2}d x$ is the constant that normalizes Gaussian distribution, the second inequality is by the following fact: $x^p e^{-\frac{\tau}{2}x^2}\leq (\frac{p}{\tau e})^{p/2}$, the second equality is by change of variables and the last equality is by $\E_{x\sim\mathcal{N}(0,I_d)}\langle g, x\rangle^4=3\|g\|^4$. Taking $\tau = {4}/{d}$ gives the desired result.
\end{proof}

We now provide the convergence result for \texttt{Zo-RASA} (Algorithm \ref{algorithm2_vec_tran}). We remind the readers that we assume $C=1$ in both Assumptions \ref{assumption1} and \ref{assumption4}. We would first need to utilize the following Lemma \ref{lemma_zo_gk_gradk_vec_trans}, which is an analog to Lemma \ref{lemma_zo_gk_gradk}.

\begin{lemma}\label{lemma_zo_gk_gradk_vec_trans}
    Suppose Assumptions \ref{assumption0_2}, \ref{assumption1}, \ref{assumption_compactness}, \ref{assumption3} and \ref{assumption4} hold, and $\{x^k,g^k\}$ is generated by Algorithm~\ref{algorithm2}. 
    We have
    \begin{equation*}
    \begin{split}
        \E\|g^{k} - \grad f(x^{k})\|_{x^{k}}^2 
        \leq \Gamma_{k}\tilde{\sigma}_0^2 + \Gamma_{k}\sum_{i=1}^{k}\Big( \frac{(1+\tau_{i-1})\tau_{i-1}}{\Gamma_{i}}\frac{L^2\|g^{i-1}\|_{x^{i-1}}^2}{\beta^2}+\frac{\tau_{i-1}^2}{\Gamma_{i}}\tilde{\sigma}_{i-1}^2+\tau_k\hat{\sigma}^2 \Big),
    \end{split}
    \end{equation*}
    where the expectation $\E$ is taken with respect to all random variables up to iteration $k$, including the Gaussian variables $\{u_i\}_{i=1}^k$ in the zeroth-order estimator~\eqref{zeroth_order_estimator}, and $\tilde{\sigma}_k^2$ is defined in \eqref{def-tilde-sigma}. Further, from the definition of $\tau_k$ in \eqref{tau-2-choices}, we have 
    \begin{equation*}
    \begin{split}
        \sum_{k=1}^{N}\tau_k\E\|g^{k} - \grad f(x^{k})\|_{x^{k}}^2 \leq \sum_{k=0}^{N-1}\bigg( (1+\tau_{k})\tau_{k}\frac{L^2\E\|g^{k}\|_{x^{k}}^2}{\beta^2} + \tau_{k}^2\tilde{\sigma}_k^2+\tau_k\hat{\sigma}^2 \bigg)+\tilde{\sigma}_0^2, \\
        \sum_{k=1}^{N}\tau_k^2\E\|g^{k} - \grad f(x^{k})\|_{x^{k}}^2 \leq \sum_{k=0}^{N-1}\bigg( (1+\tau_{k})\tau^2_{k}\frac{L^2\E\|g^{k}\|_{x^{k}}^2}{\beta^2} + \tau_{k}^3\tilde{\sigma}_k^2+\tau^2_k\hat{\sigma}^2 \bigg)+ \sum_{k=1}^{N}\tau_k^2\tilde{\sigma}_0^2.
    \end{split}
    \end{equation*}
\end{lemma}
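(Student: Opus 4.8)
The plan is to mirror the proof of Lemma~\ref{lemma_zo_gk_gradk} line by line, substituting the vector transport $\mathcal{T}^{k-1}$ for the parallel transport $P_{x^{k-1}}^{x^{k}}$, and replacing every appeal to the isometry \eqref{eq_isometry_parallel_trans} by the non-expansiveness $\|\mathcal{T}_{g}(v)\|_{x^{+}}\le\|v\|_{x}$ furnished by the first inequality of Assumption~\ref{assumption1}. First I would use linearity of $\mathcal{T}^{k-1}$ (property (iii) in Definition~\ref{def_vec_para_trans}) to write the \emph{exact} decomposition
\[
g^{k}-\grad f(x^{k}) = (1-\tau_{k-1})\,\mathcal{T}^{k-1}\!\big(g^{k-1}-\grad f(x^{k-1})\big) + \tau_{k-1}e_{k-1} + \tau_{k-1}\Delta_{k-1}^{f},
\]
where now $e_{k-1}:=\tfrac{1}{\tau_{k-1}}\big(\mathcal{T}^{k-1}\grad f(x^{k-1})-\grad f(x^{k})\big)$ and $\Delta_{k-1}^{f}:=\mathcal{T}^{k-1}\big(G_{\mu}^{k-1}-\grad f(x^{k-1})\big)$. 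Expanding the squared norm and applying Jensen's inequality (convexity of $\|\cdot\|^2$) together with non-expansiveness gives the analog of \eqref{zo_temp1}. The cross term $\delta_{k-1}$ is handled exactly as before, since $\E_{u^{k}}[\Delta_{k-1}^{f}\mid\mathcal{F}_{k-2}]=\mathcal{T}^{k-1}\big(\E_{u^{k}}G_{\mu}^{k-1}-\grad f(x^{k-1})\big)$ and non-expansiveness reduces its norm-squared to $\|\E_{u^{k}}G_{\mu}^{k-1}-\grad f(x^{k-1})\|_{x^{k-1}}^{2}\le\hat\sigma^{2}$ via Lemma~\ref{bound_zo_estimator}, which is valid for the retraction estimator \eqref{zeroth_order_estimator_retr} by the remark following Lemma~\ref{lemma_exp_retr_smoothness}. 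Likewise $\|\Delta_{k-1}^{f}\|_{x^{k}}^{2}\le\|G_{\mu}^{k-1}-\grad f(x^{k-1})\|_{x^{k-1}}^{2}$, so its conditional expectation is controlled by the third bound in Lemma~\ref{bound_zo_estimator}.

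The one genuinely new step is bounding $\|e_{k-1}\|_{x^{k}}$, where the loss of isometry surfaces: $\mathcal{T}^{k-1}\grad f(x^{k-1})$ is no longer a true parallel translate. The plan is to insert the parallel transport as an intermediary,
\[
\mathcal{T}^{k-1}\grad f(x^{k-1})-\grad f(x^{k}) = \big(\mathcal{T}^{k-1}\grad f(x^{k-1})-P_{x^{k-1}}^{x^{k}}\grad f(x^{k-1})\big) + \big(P_{x^{k-1}}^{x^{k}}\grad f(x^{k-1})-\grad f(x^{k})\big),
\]
bounding the first bracket by the third inequality of Assumption~\ref{assumption1} (yielding $C\|\grad f(x^{k-1})\|\,\mathsf{d}(x^{k-1},x^{k})$) and the second by smoothness Assumption~\ref{assumption0_2} (yielding $L\,\mathsf{d}(x^{k-1},x^{k})$). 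Using the second inequality of Assumption~\ref{assumption1} to control $\mathsf{d}(x^{k-1},x^{k})\le t_{k-1}\|g^{k-1}\|_{x^{k-1}}$, together with $\|\grad f(x^{k-1})\|\le G$ from compactness (Assumption~\ref{assumption_compactness}) and $C=1$, gives $\|e_{k-1}\|_{x^{k}}^{2}\le\frac{(L+G)^{2}\|g^{k-1}\|_{x^{k-1}}^{2}}{\beta^{2}}$ after using $t_{k-1}/\tau_{k-1}=1/\beta$. Absorbing the constant $L+G$ into the symbol $L$ (consistent with the convention fixed in Lemma~\ref{lemma_exp_retr_smoothness}) reproduces exactly the coefficient $\frac{L^{2}\|g^{k-1}\|^{2}}{\beta^{2}}$ appearing in the statement.

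From this point the argument is identical to Lemma~\ref{lemma_zo_gk_gradk}: divide the one-step recursion by $\Gamma_{k}$, telescope from $g^{0}=G_{\mu}^{\retr}(x^{0})$, and invoke the two inequalities in \eqref{tau-conditions} (and $\sum_{k}\tau_{k}\Gamma_{k}\le1$) to derive the $\sum_{k}\tau_{k}$- and $\sum_{k}\tau_{k}^{2}$-weighted bounds. I expect the main obstacle to be purely bookkeeping: verifying that the extra error injected by the non-isometric transport into $\|e_{k-1}\|^{2}$ is of exactly the same order as the term already present (rather than an uncontrolled additive constant), which is what makes the final bound coincide verbatim with the parallel-transport case. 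The compactness hypothesis is essential here, since it is the only device that converts the stray factor $\|\grad f(x^{k-1})\|$ into a constant that can be folded into $L$.
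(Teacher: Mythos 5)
Your proposal is correct and follows essentially the same route as the paper, which simply declares the proof ``almost identical'' to that of Lemma~\ref{lemma_zo_gk_gradk} modulo using Assumption~\ref{assumption1} to get $\mathsf{d}(x^k,x^{k+1})\le t_k\|g^k\|_{x^k}$. Your explicit handling of $e_{k-1}$ --- inserting the parallel transport as an intermediary, invoking the third inequality of \eqref{eq_assump_vec_trans1} together with compactness to control the stray $C\|\grad f(x^{k-1})\|$ term, and folding $L+GC$ into the renamed constant $L$ from Lemma~\ref{lemma_exp_retr_smoothness} --- is precisely the detail the paper omits, and it is carried out correctly.
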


\begin{proof}
The proof is almost identical to the proof of Lemma \ref{lemma_zo_gk_gradk}, and we thus omit the details. Note that here we need to utilize Assumption \ref{assumption1} to show $\mathsf{d}(x^i,x^{i+1})^2 \leq t_i^2\|g^i\|_{x^i}^2$.
\end{proof}

To show the bound for the term $\E\|P_{x^{k+1}}^{x^{k}}g^{k+1} - g^k\|_{x^k}^2$, we further need to utilize the following bound for $\|g^k\|_{x^{k}}$ first.

\begin{lemma}\label{lemma_zo_gk_bounded}
    Consider $g^k$ given by Algorithm \ref{algorithm2_vec_tran}. Suppose Assumption \ref{assumption0_2}, \ref{assumption1}, \ref{assumption_compactness}, \ref{assumption3} and \ref{assumption4} hold. Then, we have $
        \E\|g^{k}\|_{x^{k}}^2\leq \mu^2 L^2(d + 6)^3 + 2(d+4) G^2$ and $\E\|g^{k}\|_{x^{k}}^4\leq \frac{\mu^4L^4}{2}(d+12)^6 + 3d^2 G^4$,  where the expectation $\E$ is taken with respect to all random variables up to iteration $k$.
\end{lemma}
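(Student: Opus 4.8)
The plan is to exploit the convex-combination structure of the moving-average update together with the non-expansiveness of the vector transport, and then close a simple one-step recursion by induction on $k$. Writing $B_2 := \mu^2 L^2(d+6)^3 + 2(d+4)G^2$ and $B_4 := \frac{\mu^4 L^4}{2}(d+12)^6 + 3d^2 G^4$ for the two target bounds, I would prove $\E\|g^k\|_{x^k}^2 \leq B_2$ and $\E\|g^k\|_{x^k}^4 \leq B_4$ simultaneously.

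For the inductive step, I would first use linearity of the vector transport (property (iii) in Definition \ref{def_vec_para_trans}) to rewrite the update as $g^{k+1} = \mathcal{T}^k\big((1-\tau_k)g^k + \tau_k G_\mu^k\big)$. The first inequality in Assumption \ref{assumption1} then gives $\|g^{k+1}\|_{x^{k+1}} \leq \|(1-\tau_k)g^k + \tau_k G_\mu^k\|_{x^k}$, and since $\tau_k \in [0,1]$, convexity of the squared norm and of the fourth-power norm yields $\|g^{k+1}\|_{x^{k+1}}^2 \leq (1-\tau_k)\|g^k\|_{x^k}^2 + \tau_k\|G_\mu^k\|_{x^k}^2$ and likewise $\|g^{k+1}\|_{x^{k+1}}^4 \leq (1-\tau_k)\|g^k\|_{x^k}^4 + \tau_k\|G_\mu^k\|_{x^k}^4$.

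Next, I would take expectation conditioned on $(x^k, g^k)$ (equivalently, on the randomness generated through the formation of $g^k$), under which $x^{k+1}$, $\mathcal{T}^k$ and $g^k$ are all fixed, so the only remaining randomness in $G_\mu^k = G_\mu^{\retr}(x^k)$ comes from the fresh Gaussian directions $u^k$ and sample $\xi_k$. Lemma \ref{bound_zo_estimator} (which applies to the retraction-based estimator by Lemma \ref{lemma_exp_retr_smoothness}) bounds $\E[\|G_\mu^k\|_{x^k}^2 \mid x^k]$ by $\mu^2 L^2(d+6)^3 + 2(d+4)\|\grad f(x^k)\|_{x^k}^2$, and Lemma \ref{bound_zo_estimator_4th} bounds the corresponding fourth moment; invoking the uniform gradient bound $\|\grad f(x^k)\|_{x^k} \leq G$ from Assumption \ref{assumption_compactness} replaces both gradient terms and produces exactly $B_2$ and $B_4$. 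Taking total expectation gives $\E\|g^{k+1}\|_{x^{k+1}}^2 \leq (1-\tau_k)\E\|g^k\|_{x^k}^2 + \tau_k B_2$ and $\E\|g^{k+1}\|_{x^{k+1}}^4 \leq (1-\tau_k)\E\|g^k\|_{x^k}^4 + \tau_k B_4$, each of which is a convex combination that preserves the inductive bound.

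Finally, the base case $k=0$ follows because $g^0 = G_\mu^{\retr}(x^0)$, so the same two moment lemmas together with $\|\grad f(x^0)\|_{x^0} \leq G$ give $\E\|g^0\|_{x^0}^2 \leq B_2$ and $\E\|g^0\|_{x^0}^4 \leq B_4$ directly, independent of the initial batch-size. I do not expect a genuine obstacle: the proof is essentially routine once the update is recognized as a transported convex combination. The only points requiring care are (i) handling the conditioning so that the per-step moment bounds for $G_\mu^k$ apply with the true gradient already replaced by $G$ via compactness, and (ii) using the non-expansiveness in Assumption \ref{assumption1} to let the transport drop out of the norm without introducing extra error terms.
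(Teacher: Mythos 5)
Your proposal is correct and follows essentially the same route as the paper's proof: rewrite the update as a transported convex combination, use non-expansiveness of $\mathcal{T}^k$ and convexity of $\|\cdot\|^2$ (resp.\ $\|\cdot\|^4$) to get a one-step recursion, bound $\E\|G_\mu^k\|^2$ and $\E\|G_\mu^k\|^4$ via Lemmas \ref{bound_zo_estimator} and \ref{bound_zo_estimator_4th} together with $\|\grad f(x^k)\|_{x^k}\leq G$, and close by induction from the base case $g^0=G_\mu(x^0)$. The only cosmetic difference is that the paper obtains the fourth-moment recursion by squaring the second-moment inequality and applying Jensen again, whereas you invoke convexity of $\|\cdot\|^4$ directly; these are equivalent.
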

\begin{proof}
    Note that we have 
    \begin{align*}
    \|g^{k}\|_{x^{k}}^2 & = \|(1-\tau_{k-1}) \mathcal{T}^{k-1} (g^{k-1}) + \tau_{k-1} \mathcal{T}^{k-1} (G_{\mu}^{k-1})\|_{x^{k}}^2 \\ & \leq (1-\tau_{k-1})\|g^{k-1}\|_{x^{k-1}}^2 + \tau_{k-1}\|G_{\mu}^{k-1}\|_{x^{k-1}}^2.
    \end{align*}
    Taking expectation conditioned on $\mathcal{F}_{k-1}$, we have by Lemma \ref{bound_zo_estimator} that $
        \E[\|g^{k}\|_{x^{k}}^2|\mathcal{F}_{k-1}]\leq (1-\tau_{k-1})\E\|g^{k-1}\|_{x^{k-1}}^2 + \tau_{k-1}(\mu^2 L^2(d + 6)^3 + 2(d+4) \|\grad f(x^{k-1})\|_{x^{k-1}}^2)$. We remove the conditional expectation by law of total expectation, also by Assumption \ref{assumption_compactness} we have that 
    \begin{align*}
        \E\|g^{k}\|_{x^{k}}^2\leq (1-\tau_{k-1})\E\|g^{k-1}\|_{x^{k-1}}^2 + \tau_{k-1}(\mu^2 L^2(d + 6)^3 + 2(d+4) G^2).
    \end{align*}
    Denote $A_k=\E\|g^{k}\|_{x^{k}}^2$, note that we have $A_k\leq (1-\tau_{k-1})A_{k-1} + \tau_{k-1}(\mu^2 L^2(d + 6)^3 + 2(d+4) G^2)$. Again from Lemma \ref{bound_zo_estimator} we have $A_0 \leq \mu^2 L^2(d + 6)^3 + 2(d+4) G^2$, from which and using induction, we conclude that $ A_k = \E\|g^{k}\|_{x^{k}}^2\leq \mu^2 L^2(d + 6)^3 + 2(d+4) G^2$. As for the fourth moment, note that
    \begin{align*}
        \E&(\|g^{k}\|_{x^{k}}^2)^2 \leq  \E\left((1-\tau_{k-1})\|g^{k-1}\|_{x^{k-1}}^2 + \tau_{k-1}\|G_{\mu}^{k-1}\|_{x^{k-1}}^2\right)^2 \\
        \leq& (1-\tau_{k-1})\E\|g^{k-1}\|_{x^{k-1}}^4 + \tau_{k-1}\E\|G_{\mu}^{k-1}\|_{x^{k-1}}^4,\\
        \leq & (1-\tau_{k-1})\E\|g^{k-1}\|_{x^{k-1}}^4 + \tau_{k-1}\bigg(\frac{\mu^4 L^4}{2 }(d+12)^6 + 3d^2\|\grad f(x^k)\|_{x^k}^4\bigg)
    \end{align*}
    where the last inequality is by Lemma \ref{bound_zo_estimator_4th}. The final result follows similarly to the second moment case.
\end{proof}

Now we are ready to study the difference between $g^k$ and $g^{k+1}$.
\begin{lemma}\label{lemma_zo_sum_gkplus1_gk_retr}
    Suppose Assumptions \ref{assumption0_2}, \ref{assumption1}, \ref{assumption_compactness}, \ref{assumption3} and \ref{assumption4} hold, and take $\tau_k$ as in \eqref{tau-2-choices}. Then, we have
    \begin{align}\label{zo_sum_gkplus1_gk_retr}
    \begin{split}
        \sum_{k=1}^{N}\E\|P_{x^{k+1}}^{x^{k}}g^{k+1} &- g^k\|_{x^k}^2\leq  \frac{4L^2}{\beta^2}\sum_{k=0}^{N-1}(1+\tau_{k})\tau^2_{k}\E\|g^{k}\|_{x^{k}}^2 + 4\sum_{k=0}^{N}(\tau_k^2+\tau_k^3)\Tilde{\sigma}_k^2\\& +\left[4\tilde{\sigma}_0^2 + 4\hat{\sigma}^2 + \frac{8}{\beta^2}\bigg(\frac{\mu^4L^4}{2}(d+12)^6 + 3d^2 G^4\bigg)\right]\sum_{k=0}^{N}\tau_k^2,
    \end{split}
    \end{align}
    where the expectation $\E$ is taken with respect to all random variables up to iteration $k$, which includes the random variables $u$ in the zeroth-order estimator \eqref{zeroth_order_estimator_retr}.
\end{lemma}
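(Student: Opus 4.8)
The plan is to mirror the proof of Lemma~\ref{lemma_zo_sum_gkplus1_gk}, while accounting for the fact that the vector transport is no longer an isometry and no longer coincides with parallel transport, so the clean identity $\|P_{x^{k+1}}^{x^{k}}g^{k+1} - g^k\|_{x^k}^2 = \tau_k^2\|G_{\mu}^k - g^k\|_{x^k}^2$ of the previous section fails. The starting point is an exact decomposition. Writing $P := P_{x^{k}}^{x^{k+1}}$ (so $P_{x^{k+1}}^{x^{k}} = P^{-1}$ is its isometric inverse) and inserting $\mathcal{T}^k(v) = P(v) + (\mathcal{T}^k(v) - P(v))$ into the update of Algorithm~\ref{algorithm2_vec_tran}, I would obtain
\[
P_{x^{k+1}}^{x^{k}}g^{k+1} - g^k = \tau_k(G_{\mu}^k - g^k) + (1-\tau_k)\,\epsilon_1^k + \tau_k\,\epsilon_2^k,
\]
where $\epsilon_1^k := P^{-1}(\mathcal{T}^k(g^k) - P(g^k))$ and $\epsilon_2^k := P^{-1}(\mathcal{T}^k(G_{\mu}^k) - P(G_{\mu}^k))$. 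In the parallel-transport case these error vectors vanish and only the leading term $\tau_k(G_{\mu}^k - g^k)$ survives; here they are the new objects to control. Applying $\|a+b+c\|_{x^k}^2 \le 2\|a\|_{x^k}^2 + 4\|b\|_{x^k}^2 + 4\|c\|_{x^k}^2$ with $(1-\tau_k)^2 \le 1$ separates the leading term from the transport errors.

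For the leading term the argument is exactly as in Lemma~\ref{lemma_zo_sum_gkplus1_gk}: I split $G_{\mu}^k - g^k = (G_{\mu}^k - \grad f(x^k)) + (\grad f(x^k) - g^k)$, take the conditional expectation, and use Lemma~\ref{bound_zo_estimator} to bound $\E[\|G_{\mu}^k - \grad f(x^k)\|_{x^k}^2\mid\mathcal{F}_{k-1}]$ by $\tilde\sigma_k^2$, while the remaining $\sum_k \tau_k^2\,\E\|g^k - \grad f(x^k)\|_{x^k}^2$ is controlled by the second inequality of Lemma~\ref{lemma_zo_gk_gradk_vec_trans}. This produces the $\tfrac{4L^2}{\beta^2}\sum(1+\tau_k)\tau_k^2\E\|g^k\|_{x^k}^2$ term, the $4\sum(\tau_k^2+\tau_k^3)\tilde\sigma_k^2$ term, and the $(4\hat\sigma^2 + 4\tilde\sigma_0^2)\sum\tau_k^2$ contributions of the claimed bound, the coefficients being double those of Lemma~\ref{lemma_zo_sum_gkplus1_gk} precisely because of the extra factor $2$ picked up when isolating the error terms.

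The heart of the argument, and the main obstacle, is the control of $\epsilon_1^k$ and $\epsilon_2^k$, since this is where the use of retraction/vector transport genuinely departs from the isometric analysis. Using the isometry of $P^{-1}$ together with the third inequality of Assumption~\ref{assumption1} (with $C=1$), I would bound $\|\epsilon_1^k\|_{x^k} = \|\mathcal{T}^k(g^k) - P(g^k)\|_{x^{k+1}} \le \|g^k\|_{x^k}\,\mathsf{d}(x^k,x^{k+1})$ and likewise $\|\epsilon_2^k\|_{x^k} \le \|G_{\mu}^k\|_{x^k}\,\mathsf{d}(x^k,x^{k+1})$. The second inequality of Assumption~\ref{assumption1} gives $\mathsf{d}(x^k,x^{k+1}) \le t_k\|g^k\|_{x^k} = (\tau_k/\beta)\|g^k\|_{x^k}$, hence $\|\epsilon_1^k\|_{x^k}^2 \le (\tau_k^2/\beta^2)\|g^k\|_{x^k}^4$ and $\|\epsilon_2^k\|_{x^k}^2 \le (\tau_k^2/\beta^2)\|G_{\mu}^k\|_{x^k}^2\|g^k\|_{x^k}^2$. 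Squaring has thus turned the transport error into genuinely fourth-order quantities, which is exactly why the fourth-moment condition (Assumption~\ref{assumption3}) is required. Taking expectations, $\E\|g^k\|_{x^k}^4$ is bounded by Lemma~\ref{lemma_zo_gk_bounded}, while the cross term is handled by Cauchy--Schwarz, $\E[\|G_{\mu}^k\|_{x^k}^2\|g^k\|_{x^k}^2] \le (\E\|G_{\mu}^k\|_{x^k}^4)^{1/2}(\E\|g^k\|_{x^k}^4)^{1/2}$, both factors being bounded by $\tfrac{\mu^4L^4}{2}(d+12)^6 + 3d^2G^4$ via Lemma~\ref{bound_zo_estimator_4th} and Lemma~\ref{lemma_zo_gk_bounded}. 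Using $\tau_k\le 1$ to absorb the extra $\tau_k^2$ from $\epsilon_2^k$, the two error contributions combine into $\tfrac{8}{\beta^2}\big(\tfrac{\mu^4L^4}{2}(d+12)^6 + 3d^2G^4\big)\sum_k\tau_k^2$. Summing over $k$ and collecting the $\sum_k\tau_k^2$ contributions from the leading term and the errors then yields exactly \eqref{zo_sum_gkplus1_gk_retr}.
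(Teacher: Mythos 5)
Your proof is correct and reaches the bound \eqref{zo_sum_gkplus1_gk_retr} with the right constants, but your decomposition differs from the paper's in one meaningful way. The paper compares $g^{k+1}$ to $\mathcal{T}^k g^k$ rather than to $P_{x^k}^{x^{k+1}}g^k$: by linearity of the vector transport, $g^{k+1}-\mathcal{T}^k g^k=\tau_k\mathcal{T}^k(G_\mu^k-g^k)$, which is controlled by $\tau_k^2\|G_\mu^k-g^k\|_{x^k}^2$ using only the non-expansiveness $\|\mathcal{T}_g(v)\|\le\|v\|$ from Assumption \ref{assumption1}. This leaves a \emph{single} transport-versus-parallel-transport error term, $\|\mathcal{T}^k g^k-P_{x^k}^{x^{k+1}}g^k\|^2\le(\tau_k^2/\beta^2)\|g^k\|^4$, involving $g^k$ alone; the fourth moment of $G_\mu^k$ enters only indirectly, through Lemma \ref{lemma_zo_gk_bounded}. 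Your route instead replaces $\mathcal{T}^k$ by $P_{x^k}^{x^{k+1}}$ separately on each of $g^k$ and $G_\mu^k$, producing two error vectors $\epsilon_1^k,\epsilon_2^k$ and forcing you to handle the mixed quantity $\E[\|G_\mu^k\|^2\|g^k\|^2]$ via Cauchy--Schwarz together with a direct appeal to Lemma \ref{bound_zo_estimator_4th}. Both arguments are valid and land on the same constants (your extra $\tau_k^4$ term is absorbed by $\tau_k\le1$, exactly as you say); the paper's version is slightly more economical because exploiting linearity of $\mathcal{T}^k$ before comparing to parallel transport eliminates the $\epsilon_2^k$ term entirely, while yours makes more transparent where the fourth-moment assumption on $G_\mu^k$ is genuinely consumed.
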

\begin{proof}
    Since
    \begin{align*}
    \begin{aligned}
        &\|P_{x^{k+1}}^{x^{k}}g^{k+1} - g^k\|_{x^{k}}^2 = \|g^{k+1} - P_{x^{k}}^{x^{k+1}}g^k\|_{x^{k+1}}^2 \\
        \leq & 2\|g^{k+1} - \mathcal{T}^k g^k\|_{x^{k+1}}^2 + 2\|\mathcal{T}^k g^k - P_{x^{k}}^{x^{k+1}}g^k\|_{x^{k+1}}^2 \\
        \leq & 2\tau_k^2 \|G_{\mu}^k - g^k\|_{x^{k}}^2+2\mathsf{d}(x^{k+1}, x^{k})^2\|g^k\|_{x^{k}}^2\\
        \leq & 4\tau_k^2 \|G_{\mu}^k - \grad f(x^k)\|_{x^{k}}^2 + 4\tau_k^2 \|\grad f(x^k) - g^k\|_{x^{k}}^2 +2\frac{\tau_k^2}{\beta^2} \|g^k\|_{x^{k}}^4,
    \end{aligned}
    \end{align*}
    where the second inequality is by the update and Assumption \ref{assumption1}, and the last inequality is by Assumption \ref{assumption1}.
    Now taking the expectation conditioned on $\mathcal{F}_{k-1}$ we get:
    \begin{align*}
        \E[\|P_{x^{k+1}}^{x^{k}}g^{k+1} &- g^k\|_{x^{k}}^2|\mathcal{F}_{k-1}]\leq 4\tau_k^2 \E[\|G_{\mu}^k - \grad f(x^k)\|_{x^{k}}^2|\mathcal{F}_{k-1}] \\&+ 4\tau_k^2 \E[\|\grad f(x^k) - g^k\|_{x^{k}}^2|\mathcal{F}_{k-1}] +2\frac{\tau_k^2}{\beta^2} \E[\|g^k\|_{x^{k}}^4|\mathcal{F}_{k-1}].
    \end{align*}
    Thus we have (by law of total expectation):
    \begin{equation*}
    \begin{split}
        & \sum_{k=1}^{N}\E\|P_{x^{k+1}}^{x^{k}}g^{k+1} - g^k\|_{x^{k}}^2\\
        \leq & 4\sum_{k=1}^{N}\tau_k^2\E\|G_{\mu}^k - \grad f(x^k)\|_{x^{k}}^2 + 4\sum_{k=1}^{N}\tau_k^2\E\|\grad f(x^k) - g^k\|_{x^{k}}^2+\frac{2}{\beta^2} \sum_{k=1}^{N}\tau_k^2\E\|g^k\|_{x^{k}}^4 \\
        \leq& 4\sum_{k=1}^{N}\tau_k^2\Tilde{\sigma}_k^2 + 4\sum_{k=1}^{N}\tau_k^2\E\|\grad f(x^k) - g^k\|_{x^{k}}^2+\frac{8}{\beta^2}\bigg(\frac{\mu^4L^4}{2}(d+12)^6 + 3d^2 G^4\bigg) \sum_{k=1}^{N}\tau_k^2 
    \end{split}
    \end{equation*}
    where the second inequality is by Lemmas \ref{bound_zo_estimator} and \ref{lemma_zo_gk_bounded}. The desired result follows by applying Lemma \ref{lemma_zo_gk_gradk_vec_trans} to the above inequality.
\end{proof}

We now state the main result in Theorem \ref{theorem2_vec}, as an analog to Theorem \ref{theorem3}. Notice that different from Theorem \ref{theorem3}, we do not need $N=\Omega(d)$ in case (ii), in view of Remark \ref{rmk_bdd_grad_no_d} and Assumption \ref{assumption_compactness}.

\begin{theorem}\label{theorem2_vec}
    Suppose Assumptions \ref{assumption0_2}, \ref{assumption1}, \ref{assumption_compactness}, \ref{assumption3} and \ref{assumption4} hold. In Algorithm \ref{algorithm2_vec_tran}, we set $
     \mu=\mathcal{O}\big(\frac{1}{L d^{3/2}N^{1/4}}\big)$ and $
       \beta\geq \sqrt{d} L$. Then the following holds.
    \begin{itemize}
        \item[(i)] If we choose $\tau_0=1$, $\tau_k= {1}/{\sqrt{N}}$, $k\geq 1$ and $m_k\equiv 8(d+4)$, $k\geq 0$, then we have $\frac{1}{N+1}\sum_{k=0}^{N} \E\|\grad f(x^{k})\|_{x^{k}}^2 \leq \mathcal{O}({1}/{\sqrt{N}})$.
        \item[(ii)] If we choose $\tau_0=1$, $\tau_k= {1}/{\sqrt{dN}}$, $k\geq 1$, $m_0=d$ and $m_k=1$ for $k\geq 1$, then we have $\frac{1}{N+1}\sum_{k=0}^{N} \E\|\grad f(x^{k})\|_{x^{k}}^2 \leq \mathcal{O}(\sqrt{{d}/{N}})$. 
    \end{itemize}
    Here the expectation $\E$ is taken with respect to all random variables up to iteration $k$, which includes the random variables $u$ in zeroth-order estimator \eqref{zeroth_order_estimator_retr}.
\end{theorem}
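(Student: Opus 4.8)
The plan is to mirror the proof of Theorem~\ref{theorem3}, replacing each ingredient by its retraction/vector-transport analog while carefully tracking the additional error terms. First I would establish the one-step decrease of the potential $W$ defined in~\eqref{eta_def}. For the function-value part $f(x^{k+1}) - f(x^k)$, I would invoke the retraction-based smoothness of Lemma~\ref{lemma_exp_retr_smoothness} (which holds because the second-order retraction Assumption~\ref{assumption4} combined with Assumption~\ref{assumption0_2} upgrades exponential-map smoothness to retraction smoothness) in place of the exponential-map smoothness used before. For the $\eta$ part, the crucial observation is that although the algorithm updates $g^{k+1}$ via the vector transport $\mathcal{T}^k$, parallel transport remains available as an \emph{analysis tool} and is isometric; hence I can still write $\|g^{k+1}\|_{x^{k+1}}^2 = \|P_{x^{k+1}}^{x^k} g^{k+1}\|_{x^k}^2$ and expand exactly as in Theorem~\ref{theorem3}, obtaining a bound controlled by $\|P_{x^{k+1}}^{x^k} g^{k+1} - g^k\|_{x^k}^2$.

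Next I would sum the one-step bound over $k$, telescope $W$, and use $W \ge 0$ to discard the endpoint term, yielding an inequality of the same shape as \eqref{zo_W_decrease}. Into this I would substitute the transport-difference estimate of Lemma~\ref{lemma_zo_sum_gkplus1_gk_retr} and the gradient-tracking estimate of Lemma~\ref{lemma_zo_gk_gradk_vec_trans}. The key structural departure from Theorem~\ref{theorem3} is the extra term $\tfrac{1}{\beta^2}\sum_k \tau_k^2 \,\E\|g^k\|_{x^k}^4$ appearing in \eqref{zo_sum_gkplus1_gk_retr}; it originates from the non-isometry of the vector transport, specifically the piece $\|\mathcal{T}^k g^k - P_{x^k}^{x^{k+1}} g^k\|^2$ bounded through Assumption~\ref{assumption1}. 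I would bound this fourth-moment quantity using Lemma~\ref{lemma_zo_gk_bounded}, which gives $\E\|g^k\|_{x^k}^4 \le \tfrac{\mu^4 L^4}{2}(d+12)^6 + 3 d^2 G^4$, where the uniform gradient bound $G$ comes from the compactness Assumption~\ref{assumption_compactness}.

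The main obstacle---and the reason the hypotheses strengthen to a bounded fourth moment (Assumption~\ref{assumption3}) and to $\beta \ge \sqrt{d}\,L$---is controlling this new fourth-moment contribution, whose leading piece scales like $d^2 G^4 \sum_k \tau_k^2 / \beta^2$. The enlarged lower bound $\beta^2 \ge d L^2$ is exactly what tames the quadratic dimension factor introduced by the fourth moment, so that this term enters at the same order as the other noise terms $\sigma_k^2$ and $\tilde{\sigma}_k^2$ and the overall sample complexity is preserved. Before assembling the final estimate I would prove the analog of Lemma~\ref{lemma_zo_inequalities} under the strengthened bound on $\beta$, so that the coefficient of $\E\|g^k\|_{x^k}^2$ stays above $\tau_k/(4\beta)$ and the coefficient of $\E\|\grad f(x^k)\|_{x^k}^2$ stays below $\tau_k/4$, allowing the true-gradient term to be isolated on the left-hand side. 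Here I would also invoke Remark~\ref{rmk_bdd_grad_no_d}: since Assumption~\ref{assumption_compactness} supplies $\|\grad f\|\le G$, the $N=\Omega(d)$ requirement of Theorem~\ref{theorem3}(ii) is no longer needed.

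Finally, with an inequality of the form $\sum_k \tfrac{\tau_k}{4}\,\E\|\grad f(x^k)\|_{x^k}^2 \le c\,W(x^0,g^0) + (\text{noise terms})$ in hand, I would normalize by $N+1$ (equivalently apply the sampling device of Remark~\ref{sec:samplingtrick}) and substitute the two parameter regimes. For case~(i), with $\tau_k = 1/\sqrt{N}$, $m_k = 8(d+4)$, and $\mu = \mathcal{O}(1/(Ld^{3/2}N^{1/4}))$, the terms collapse to $\mathcal{O}(1/\sqrt{N})$ just as in Theorem~\ref{theorem3}, using $\hat\sigma^2 = \mathcal{O}(1/\sqrt{N})$, $\tfrac{1}{N}\sum_k \sigma_k^2 = \mathcal{O}(1)$, and $\tilde\sigma_0^2 = \mathcal{O}(1)$; for case~(ii), with $\tau_k = 1/\sqrt{dN}$, $m_0 = d$, $m_k = 1$, the same bookkeeping yields $\mathcal{O}(\sqrt{d/N})$. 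I expect both regimes to reduce to essentially the same algebra as Theorem~\ref{theorem3} once the extra fourth-moment term has been absorbed by the choice $\beta \ge \sqrt{d}\,L$.
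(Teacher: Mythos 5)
Your proposal is correct and follows essentially the same route as the paper's own (sketched) proof: retraction-based smoothness via Lemma~\ref{lemma_exp_retr_smoothness}, parallel transport retained as an isometric analysis tool, the extra fourth-moment term from the non-isometry of the vector transport controlled by Lemma~\ref{lemma_zo_gk_bounded} and absorbed by $\beta\ge\sqrt{d}\,L$, and compactness used to drop the $N=\Omega(d)$ requirement. The only minor redundancy is that you propose re-establishing the analog of \eqref{zo_temp_ineq2}, whereas the paper notes this is unnecessary precisely because $\tilde{\sigma}_k^2\le\sigma_k^2+\tfrac{8(d+4)}{m_k}G^2$ under Assumption~\ref{assumption_compactness}.
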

\begin{proof}[Proof of Theorem \ref{theorem2_vec}]
    The proof is very similar to the proof of Theorem \ref{theorem3}. We first will have the following inequality analogue to \eqref{zo_temp5}:
    \begin{align*}
    \begin{aligned}
        \frac{1}{8\beta^2}\sum_{k=0}^{N}\tau_k\E\|g^k\|_{x^k}^2 \leq & W^0 + \frac{1}{2\beta}\sum_{k=0}^{N}\tau_k\hat{\sigma}^2 +\frac{2}{\beta}\sum_{k=0}^{N}(\tau_k^2+\tau_k^3)\Tilde{\sigma}_k^2\\&+\frac{1}{2\beta}[4\Tilde{\sigma}_0^2+4\hat{\sigma}^2+\frac{8}{\beta^2}(\frac{\mu^2L^2}{2}(d+12)^6+3d^2G^4)]\sum_{k=0}^{N}\tau_k^2
    \end{aligned}
    \end{align*}
    Note that we still need \eqref{zo_temp_ineq1} to show the above inequality.
    
    We then directly provide the result corresponding to \eqref{zo_temp7}:
    \begin{align}\label{zo_temp7_vec}
    \begin{aligned}
        \sum_{k=1}^{N}\frac{\tau_k}{2}& \E\|\grad f(x^{k})\|_{x^{k}}^2
        \leq (8\beta^2+16 L^2)\bigg(W^0 + \frac{1}{2\beta}\sum_{k=0}^{N}\tau_k\hat{\sigma}^2 +\frac{2}{\beta}\sum_{k=0}^{N}(\tau_k^2+\tau_k^3)\Tilde{\sigma}_k^2\\&+\frac{1}{2\beta}[4\Tilde{\sigma}_0^2+4\hat{\sigma}^2+\frac{8}{\beta^2}(\frac{\mu^2L^2}{2}(d+12)^6+3d^2G^4)]\sum_{k=0}^{N}\tau_k^2\bigg) + \sum_{k=0}^{N-1}\tau_k^2\Tilde{\sigma}_k^2+\sum_{k=0}^{N-1}\tau_k^2\hat{\sigma}^2+\Tilde{\sigma}_0^2
    \end{aligned}
    \end{align}
    Now by Assumption \ref{assumption_compactness}, we have $\Tilde{\sigma}_k^2\leq \sigma_k^2+\frac{8(d+4)}{m_k}G^2$, which is exactly the reason we don't need to show an inequality similar to \eqref{zo_temp_ineq2}.

    For case (i) in Theorem~\ref{theorem2_vec}, \eqref{zo_temp7_vec} can be rewritten as 
    \[
    \frac{1}{N+1}\sum_{k=0}^{N} \E\|\grad f(x^{k})\|_{x^{k}}^2 
    \leq \frac{c_1W(x^0, g^0)}{\sqrt{N}}+ c_2\hat{\sigma}^2 + \frac{c_3\frac{1}{N}\sum_{k=0}^{N}\tilde{\sigma}_{k}^2}{\sqrt{N}} + \frac{c_4}{\sqrt{N}}\Tilde{\sigma}_0^2,
    \]
    for some absolute positive constants $c_1$, $c_2$, $c_3$ and $c_4$. The proof for case (i) is completed by noting that (see \eqref{def-tilde-sigma}) $\hat{\sigma}^2=\mathcal{O}(1/\sqrt{N})$, $\frac{1}{N}\sum_{k=0}^{N}\Tilde{\sigma}_{k}^2=\mathcal{O}(1)$ and $\Tilde{\sigma}_0^2=\mathcal{O}(1)$. 
    
    For case (ii) in Theorem \ref{theorem2_vec}, \eqref{zo_temp7_vec} can be rewritten as 
    \[
    \frac{1}{N+1}\sum_{k=0}^{N} \E\|\grad f(x^{k})\|_{x^{k}}^2 
    \leq c_1' W(x^0, g^0)\sqrt{\frac{d}{N}}+ c_2'\hat{\sigma}^2 + \frac{c_3'\frac{1}{N}\sum_{k=0}^{N}\tilde{\sigma}_{k}^2}{\sqrt{d N}} + c_4'\sqrt{\frac{d}{N}}\Tilde{\sigma}_0^2,
    \]
    for some positive constants $c_1'$, $c_2'$, $c_3'$ and $c_4'$. The proof of case (ii) is completed by noting that $\Tilde{\sigma}_0^2=\mathcal{O}(1)$, $\hat{\sigma}^2=\mathcal{O}(1/\sqrt{N})$ and $\frac{1}{N}\sum_{k=0}^{N}\Tilde{\sigma}_{k}^2=\mathcal{O}(d)$.
\end{proof}

\begin{remark}
By the technique discussed in Remark~\ref{sec:samplingtrick}, to obtain an $\epsilon$-approximate stationary point in Definition~\ref{def:epsstat} we need an oracle complexity of $\mathcal{O}(d/\epsilon^4)$.
\end{remark}

\section{Numerical experiments}\label{sec:experiments}

\subsection{$k$-PCA}

We now provide numerical results on the $k$-PCA problem to demonstrate the effectiveness of the Zo-RASA algorithms. For a given centered random vector $\mathbf{z}\in\mathbb{R}^n$, the $k$-PCA problem corresponds to finding the subspace spanned by the top-$k$ eigenvectors of its positive definite covariance matrix $\Sigma=\E[\mathbf{z} \mathbf{z}^\top]$. Formally, we have the following problem on the Stiefel manifold: 
\begin{align}\label{problem_kPCA}
\min_{X\in\St(n, r)} f(X) := -\frac{1}{2}\tr(X^\top \E[\mathbf{z} \mathbf{z}^\top] X).
\end{align}
Note that the dimension of the Stiefel is given by $d=nr-r(r+1)/2$.

For any $Y=XQ$ where $Q\in\RR^{r\times r}$, and $Q^\top Q = QQ^\top=I_r$, we have $f(X)=f(Y)$. Hence, we can equivalently view~\eqref{problem_kPCA} as the following minimization problem on the Grassmann manifold:
\begin{align*}
\min_{[X]\in\Gr(n, r)} f([X]) := -\frac{1}{2}\tr(X^\top \E[\mathbf{z} \mathbf{z}^\top] X).
\end{align*}
Note that the dimension of the Grassmannian is given by $d=r(n-r)$.

We solve \eqref{problem_kPCA} using Algorithm \ref{algorithm2_vec_tran} and compare it with the zeroth-order Riemannian SGD method from~\cite{li2022stochastic}. In all the experiments, we used projecting vector transport rather than parallel transport for Stiefel manifolds, due to the aforementioned facts that parallel transport is time-consuming to numerically compute on Stiefel manifold, and has no closed form. In the stochastic zeroth-order setting, for each query point $X_k$, the stochastic oracle returns a noise estimate of $f(x)$ based on a single observation $\mathbf{z}_k$, i.e. $F(X^{k};\mathbf{z}_k)=-1/2\tr((X^k)^\top \mathbf{z}_k \mathbf{z}_k^\top X^k)$. For our experiments, we assume $\mathbf{z}_k$ is sampled from a centered Gaussian distribution with covariance matrix given by $
\Sigma = \sum_{i=1}^r \lambda_i v_i v_i^\top + \sum_{i=r+1}^{n} \lambda_i v_i v_i^\top$,
where $V=[v_1, ..., v_n]$ is an orthogonal matrix. The first $r$ $\lambda_i$s are uniform random numbers in $[100, 200]$ and the last $n-r$ are uniform random numbers in $[1, 50]$. For our experiments, we fix $r$ and try different $n$ (reflected in different rows in Figure \ref{fig:kpca2}). 

We set $N=50000\times n$ for \texttt{Zo-RASA} and one-batch Zo-RSGD (\texttt{Zo-RSGD-1}) algorithms, while $N=50000$ for our mini-batch Zo-RSGD algorithm (\texttt{Zo-RSGD-m}). The reason here is that for \texttt{Zo-RSGD-m}, we take $m=n=\mathcal{O}(d)$ since we fix $r$ and change $n$. While the theoretical result in~\cite{li2022stochastic} requires the batch-size $m$ to be $\mathcal{O}(d/\epsilon^2)$, they empirically observed reasonable-order batch-sizes suffices. For \texttt{Zo-RASA}, according to our theory, we again take $\tau_k=0.01/\sqrt{N}$ and $\beta=100$. For \texttt{Zo-RSGD-1} and \texttt{Zo-RSGD-m}, we set $t_k$ as $t_k=10^{-4}/\sqrt{N}$ and  $t_k=5\times10^{-4}/\sqrt{N}$ respectively.

For all algorithms, we again compare the function value, norm of the Riemannian gradient and the principal angles between the current iterate and the optimal subspace. Figures~\ref{fig:kpca2} plots the results. The experimental results provide support for the proposed algorithms (and the established theory), demonstrating that the proposed Zo-RASA algorithm is more efficient in terms of decreasing the Riemannian gradient and principal angles compared to conventional zeroth-order Riemannian stochastic gradient descent methods that utilize mini-batches.

\begin{figure}[t!]
    \begin{center}
    \subfigure{\includegraphics[width=0.32\textwidth]{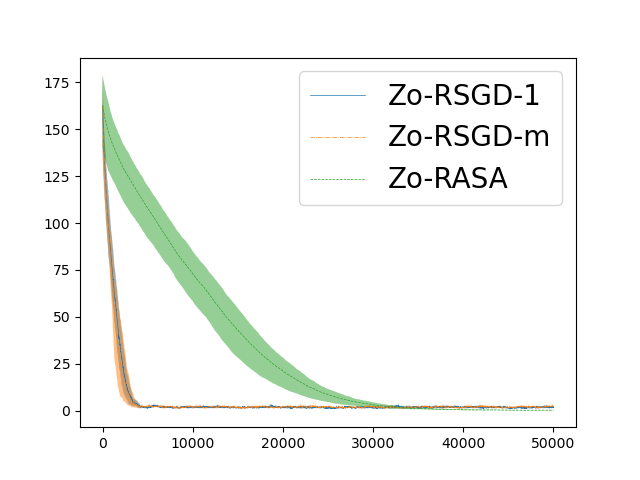}}
    \subfigure{\includegraphics[width=0.32\textwidth]{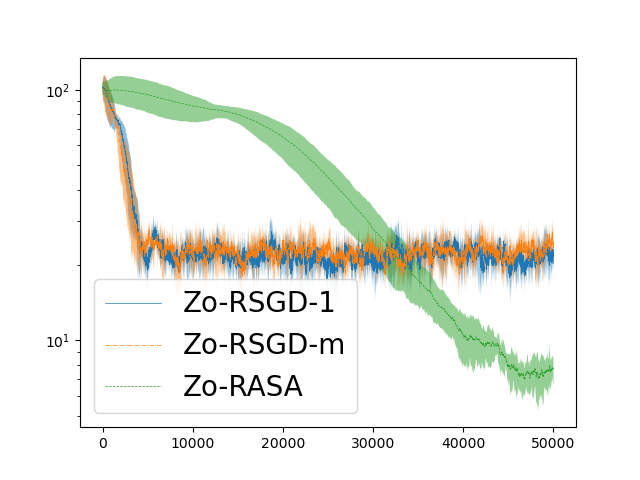}}
    \subfigure{\includegraphics[width=0.32\textwidth]{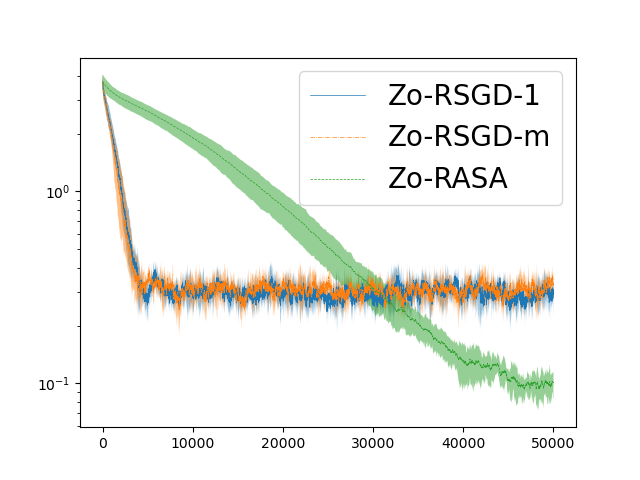}}
    
    \subfigure{\includegraphics[width=0.32\textwidth]{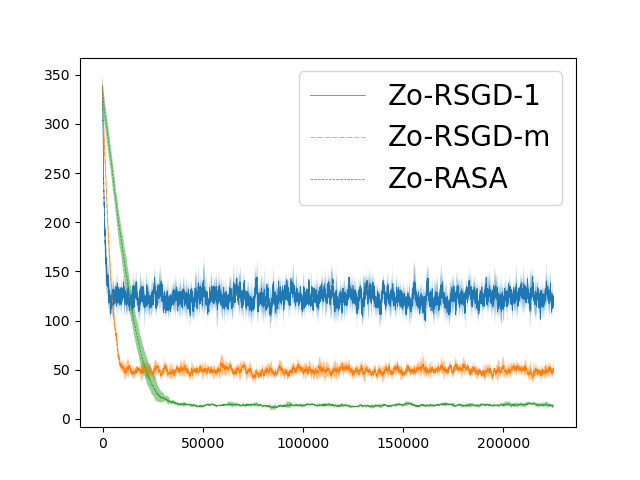}}
    \subfigure{\includegraphics[width=0.32\textwidth]{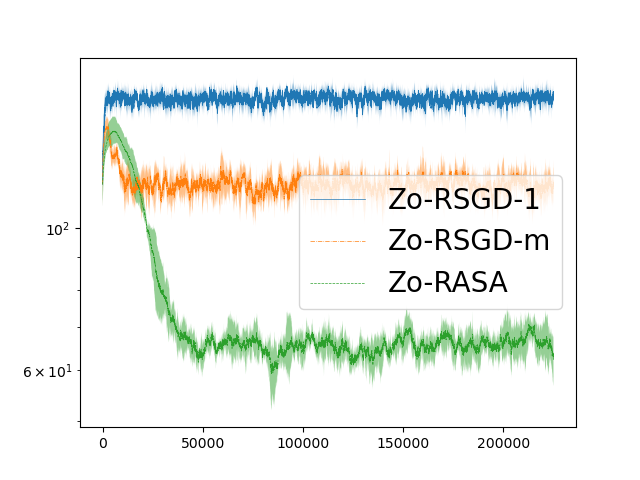}}
    \subfigure{\includegraphics[width=0.32\textwidth]{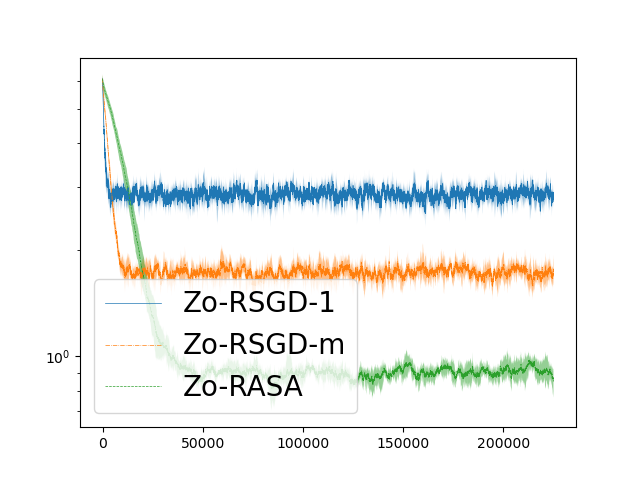}}
    
    \setcounter{subfigure}{0}
    \subfigure[Optimality gap]{\includegraphics[width=0.32\textwidth]{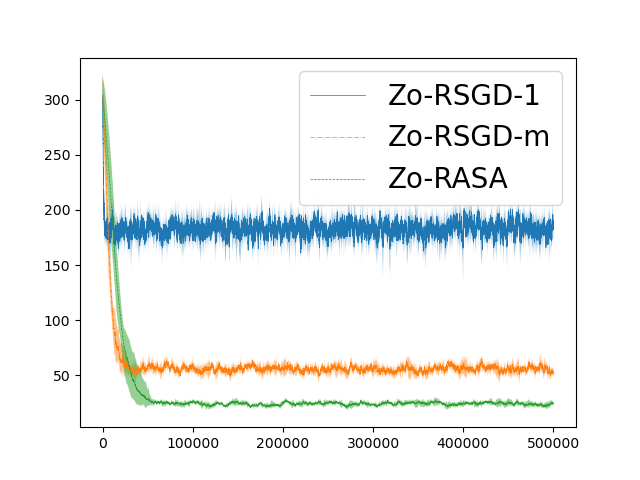}}
    \subfigure[$\|\grad f(X^t)\|$]{\includegraphics[width=0.32\textwidth]{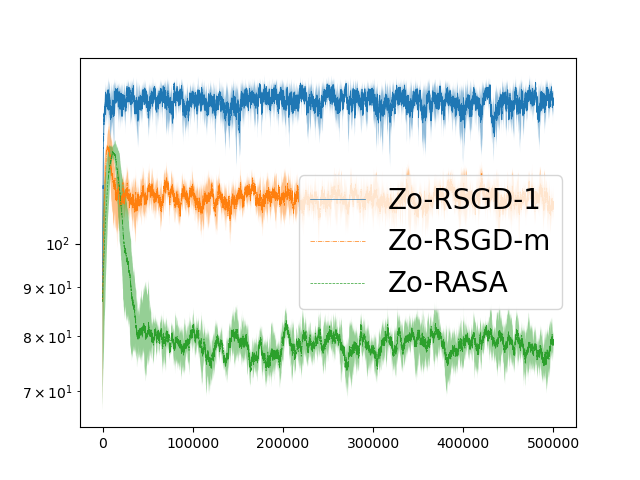}}
    \subfigure[Principal angles]{\includegraphics[width=0.32\textwidth]{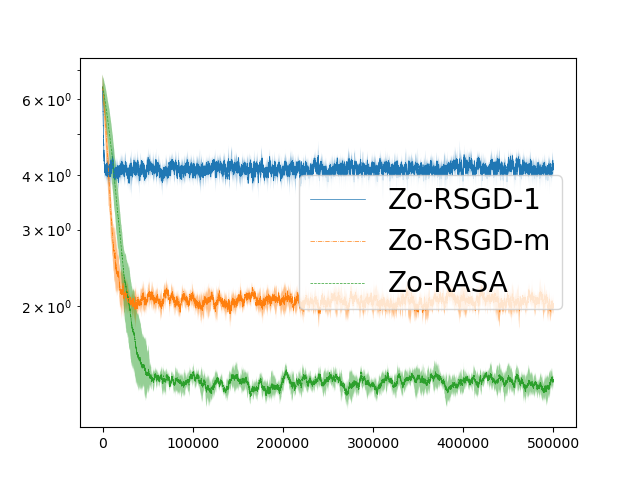}}
    
    \caption{Results for kPCA \eqref{problem_kPCA} with $n\in\{10, 30, 50\}$ (corresponding to three rows) and $r=5$. The resulting manifold (Stiefel) dimensions are $d=\{35, 135, 235\}$. The x-axis is the number of zeroth-order oracle calls (i.e. number of function value calls).}
    \label{fig:kpca2}
    \end{center}
\end{figure}

\subsection{Identification of a fixed rank symmetric positive semi-definite matrix}
We now provide another numerical example from~\cite{bonnabel2013stochastic}. Consider a matrix-version linear model as in~\cite{tsuda2005matrix}:
\begin{align*}
    y_t=\tr(W \mathbf{x}_t \mathbf{x}_t^\top) = \mathbf{x}_t^\top W \mathbf{x}_t
\end{align*}
where $\mathbf{x}_t\in\RR^n$ is the input and $y_t\in\RR$ is the output, and the unknown matrix $W\in\RR^{n\times n}$ is a positive semi-definite matrix with a fixed rank $r$ ($r\leq n$). Denote the set
\begin{align}\label{eq_frpd_manifold}
    S_{+}(n, r) = \{W\in\RR^{n\times n}| W=W^\top,\mathrm{rank}(W)=r\}
\end{align}
which is the set of positive definite matrices with rank $r$. The problem is thus formulated as a matrix least square problem
\begin{align}\label{problem_frpd}
    \min_{W\in S_{+}(n, r)}f(W):=\frac{1}{2}\E_{\mathbf{x},y}(\mathbf{x}^\top W \mathbf{x} - y)^2
\end{align}

Notice that $W$ can be represented as $W=GG^\top$ where $G\in\RR^{n,r}$ is a matrix with full column rank. Also notice that for any orthogonal matrix $O\in\RR^{r\times r}$ we have $W= G O O^\top G^\top=GG^\top$, we have the following quotient representation of the set of fixed rank positive definite matrices $S_{+}(n, r) \simeq \RR_*^{n \times r} / \mathcal{O}(r)$, where the right hand side represents the set of equivalent classes:
\begin{align*}
    [G]=\{GO| O\in\mathcal{O}(r)\}.
\end{align*}
We could thus conduct our experiment on the quotient manifold $\RR_*^{n \times r} / \mathcal{O}(r)$, with the following re-formulated problem:
\begin{align}\label{problem_frpd_re}
    \min_{[G]\in \RR_*^{n \times r} / \mathcal{O}(r)}f(G):=\frac{1}{2}\E_{\mathbf{x},y}(\mathbf{x}^\top G G^\top \mathbf{x} - y)^2
\end{align}

The manifold $S_{+}(n, r)$ has dimension $d = nr - r(r-1)/2$ and is not a compact manifold. We test \eqref{problem_frpd_re} to show the efficiency of our proposed algorithm even without the compactness assumption (Assumption \ref{assumption_compactness}) which we need to conduct our theoretical analysis.

We solve \eqref{problem_frpd_re} using Algorithm \ref{algorithm2_vec_tran} and compare it with the zeroth-order Riemannian SGD method from~\cite{li2022stochastic}. In all the experiments, we used again retraction and projecting vector transport rather than exponential mapping and parallel transport. The ground-truth $G^\star\in\RR^{n\times r}$ is sampled randomly with standard Gaussian entries. For our experiments, we sample $\mathbf{x}\sim \mathcal{N}(0, I_d)$ and construct $y=\mathbf{x}^\top W \mathbf{x}$ noiselessly. Specifically, given a query point $G^t$ and a Gaussian sample $\mathbf{x}_t$ with $y_t=\mathbf{x}_t^\top G^\star (G^\star)^\top \mathbf{x}_t$, the stochastic zeroth-order oracle gives the value $\frac{1}{2}(\mathbf{x}_t^\top G^t (G^t)^\top \mathbf{x}_t - y_t)^2$. For our experiments, we fix $r$ and test with different $n$  (reflected in different rows in Figure \ref{fig:frpd}). 

We set $N=5000\times n$ for \texttt{Zo-RASA} and one-batch Zo-RSGD (\texttt{Zo-RSGD-1}) algorithms, while $N=5000$ for our mini-batch Zo-RSGD algorithm (\texttt{Zo-RSGD-m}) for the same reason as the kPCA experiments. For \texttt{Zo-RASA}, according again to our theory, we again take $\tau_k=10^{-3}/\sqrt{N}$ and $\beta=100$. For \texttt{Zo-RSGD-1} and \texttt{Zo-RSGD-m}, we set $t_k=10^{-5}/\sqrt{N}$.

For all algorithms, we again compare the function value, norm of the Riemannian gradient and the quantity $\|G^t (G^t)^\top - G^\star (G^\star)^\top\|$ which measures the error to the ground truth positive semi-definite matrix. Figures~\ref{fig:frpd} plots the results. It's worth noticing here that mini-batch Zo-RSGD seems to work the worst in the plots, which is due to the fact that we take the step sizes the same for \texttt{Zo-RSGD-1} and \texttt{Zo-RSGD-m}. The reason we cannot enlarge the step size for \texttt{Zo-RSGD-m} is that the projectional retraction and projectional vector transport requires solving a Sylvester equation which leads to numerical stability issues if the step sizes become large (see \cite{manopt} for details). The experimental results provide support for the proposed algorithms (and the established theory), demonstrating that the proposed Zo-RASA algorithm is more efficient in terms of decreasing the Riemannian gradient and function values compared to conventional zeroth-order Riemannian stochastic gradient descent methods that utilize mini-batches.

\begin{figure}[t!]
    \begin{center}
    \subfigure{\includegraphics[width=0.32\textwidth]{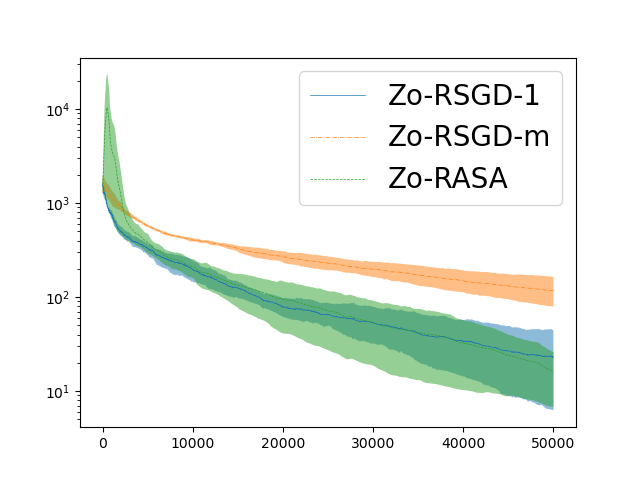}}
    \subfigure{\includegraphics[width=0.32\textwidth]{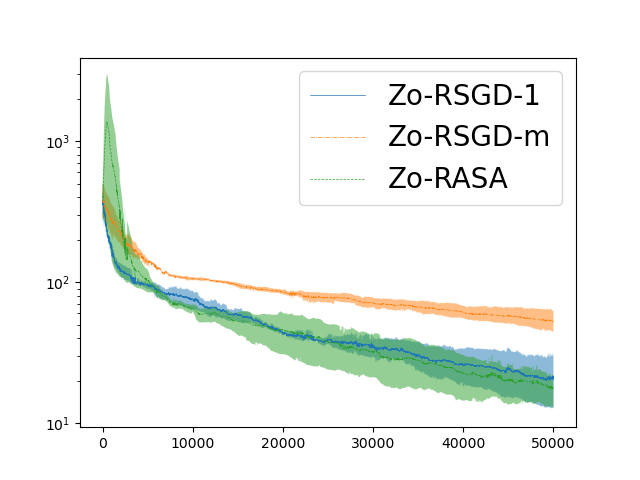}}
    \subfigure{\includegraphics[width=0.32\textwidth]{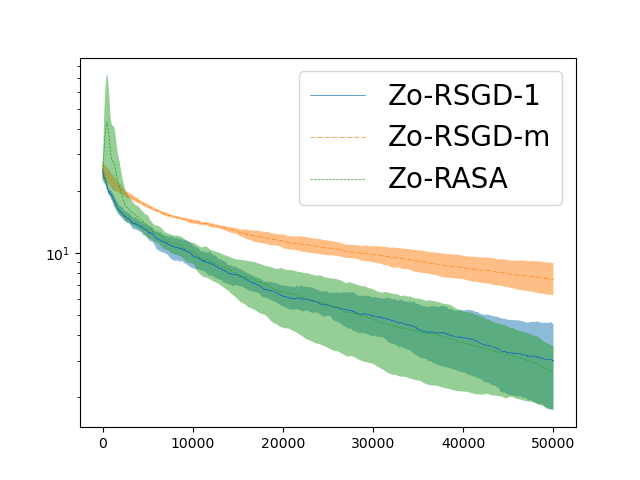}}
    
    \subfigure{\includegraphics[width=0.32\textwidth]{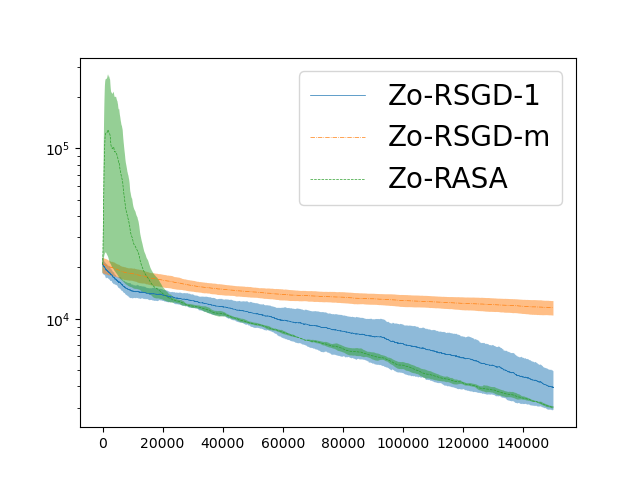}}
    \subfigure{\includegraphics[width=0.32\textwidth]{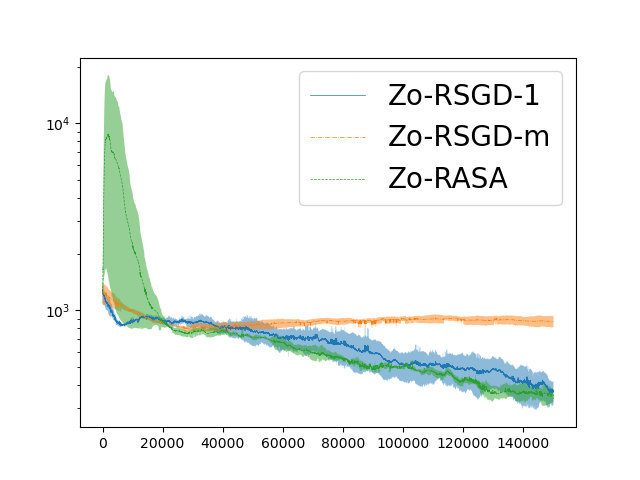}}
    \subfigure{\includegraphics[width=0.32\textwidth]{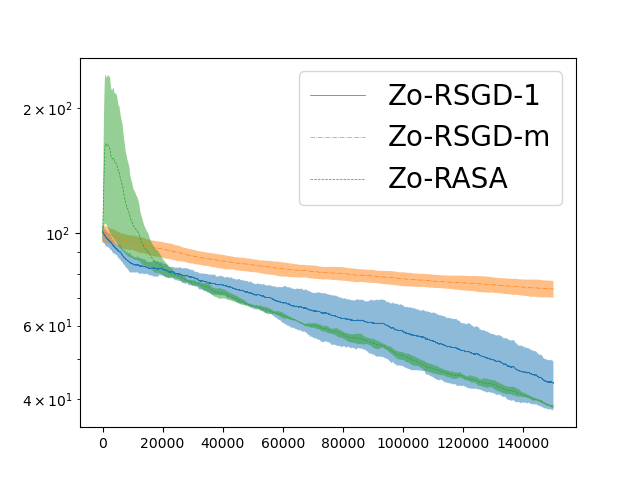}}
    
    \setcounter{subfigure}{0}
    \subfigure[Optimality gap]{\includegraphics[width=0.32\textwidth]{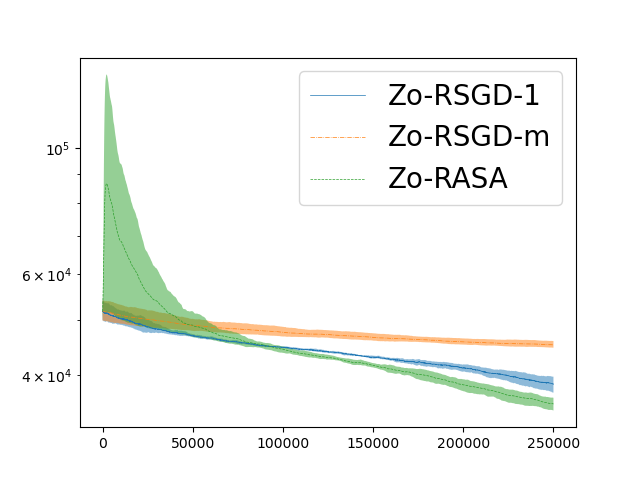}}
    \subfigure[$\|\grad f(G^t)\|$]{\includegraphics[width=0.32\textwidth]{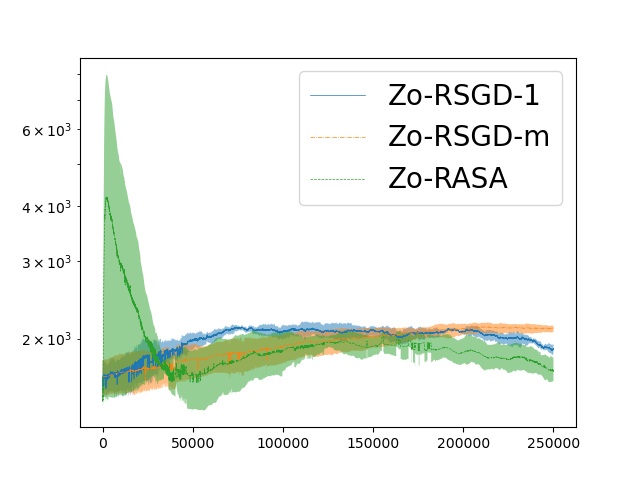}}
    \subfigure[$\|G^t (G^t)^\top - G^\star (G^\star)^\top\|$]{\includegraphics[width=0.32\textwidth]{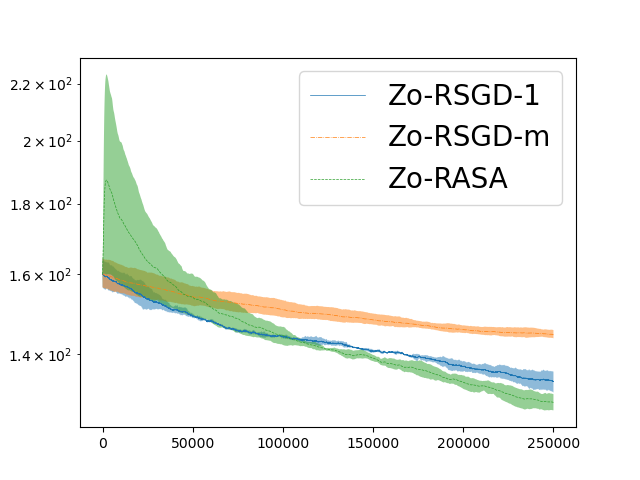}}
    
    \caption{Results for \eqref{problem_frpd_re} with $n\in\{10, 30, 50\}$ (corresponding to three rows) and $r=5$. The resulting manifold as defined in \eqref{eq_frpd_manifold} are $d=\{40, 140, 240\}$ dimensional, respectively. The x-axis is the number of zeroth-order oracle calls (i.e. number of function value calls).}
    \label{fig:frpd}
    \end{center}
\end{figure}

\section*{Acknowledgements} We thank Prof. Otis Chodosh (Stanford) for several helpful discussions and clarifications regarding several differential geometric concepts. JL thanks Xuxing Chen for helpful discussions. KB was supported in part by National Science Foundation (NSF) grant DMS-2053918. SM was supported in part by NSF grants DMS-2243650, CCF-2308597, CCF-2311275 and ECCS-2326591, UC Davis CeDAR (Center for Data Science and Artificial Intelligence Research) Innovative Data Science Seed Funding Program, and a startup fund from Rice University.

\bibliographystyle{abbrvnat} 
\bibliography{reference}
\end{document}